\newtheorem{thm}{Theorem}[section]
\newtheorem{cor}[thm]{Corollary}
\newtheorem{dfn}[thm]{Definition}
\newtheorem{lem}[thm]{Lemma}
\newtheorem{pro}[thm]{Proposition}
\theoremstyle{definition}
\newtheorem{exa}[thm]{Example}
\numberwithin{equation}{section}
\def\Aut{{\rm Aut}}
\def\CCDS{{\mathcal {CCDS}}}
\def\CSCDS{{\mathcal {CSCDS}}}
\def\dbar{{\overline d}}
\def\Diff{{\rm Diff}}
\def\ev{{\rm ev}}
\def\Hbar{{\overline H}}
\def\hphi{{\widehat \phi}}
\def\Homeo{{\rm Homeo}}
\def\id{{\rm id}}
\def\L{{\mathcal L}}
\def\oneinfty{{(1,\infty)}}
\def\osc{{\rm osc}}
\def\r{{\underline r}}
\def\R{{\mathbb R}}
\def\TCDS{{\mathcal {TCDS}}}
\def\TSCDS{{\mathcal {TSCDS}}}
\def\U{{\mathcal U}}
\def\Z{{\mathbb Z}}
\def\z{{\underline z}}
\title[Topological contact dynamics II]{Topological contact dynamics II: topological automorphisms, contact homeomorphisms, and non-smooth contact dynamical systems}
\author[S.~M\"uller \& P.~Spaeth]{Stefan M\"uller and Peter Spaeth}
\email{mueller@kias.re.kr \text{\it{and}} spaeth@kias.re.kr}
\address{Korea Institute for Advanced Study, Seoul 130-722, Republic of Korea}
\subjclass[2010]{53D10, 57R17, 37J55, 22F50, 57S05}
\keywords{Contact metric, contact topology, topological or continuous contact or strictly contact isotopy, Hamiltonian function, conformal factor, $C^0$-rigidity of contact isotopies and diffeomorphisms and conformal factors, topological automorphism of contact structure or contact form, transformation law, topological group, left invariant metric, $L^\oneinfty$-norm, $L^\infty$-norm, regular isotopy, reparameterization of contact or strictly contact isotopy, multi-parameter variation of constant loop, energy-capacity inequality, coarse equals fine contact energy, bi-invariant metric on strictly contact homeomorphism group, non-smooth contact homeomorphism, topologically conjugate contact vector fields or Reeb orbits, not $C^1$-conjugate}
\begin{document}
\thispagestyle{plain}

\begin{abstract}
This sequel to our previous paper \cite{ms:tcd11} continues the study of topological contact dynamics and applications to contact dynamics and topological dynamics.
We provide further evidence that the topological automorphism groups of a contact structure and a contact form are the appropriate transformation groups of contact dynamical systems.
The article includes an examination of the groups of time-one maps of topological contact and strictly contact isotopies, and the construction of a bi-invariant metric on the latter.
Moreover, every topological contact or strictly contact dynamical system is arbitrarily close to a continuous contact or strictly contact dynamical system with the same end point.
In particular, the above groups of time-one maps are independent of the choice of norm in the definition of the contact distance.
On every contact manifold we construct topological contact dynamical systems with time-one maps that fail to be Lipschitz continuous, and smooth contact vector fields whose flows are topologically conjugate but not conjugate by a contact $C^1$-diffeomorphism.
\end{abstract}

\maketitle
\section{Introduction} \label{sec:intro}
A contact manifold $(M,\xi)$ comes with two important groups of diffeomorphisms, the group $\Diff (M,\xi)$ of automorphisms preserving the contact structure $\xi$, and the subgroup $\Diff_0 (M,\xi)$ of automorphisms that are time-one maps of isotopies generated by a smooth contact vector field or function.
Similarly, a contact form $\alpha$ on $(M,\xi)$ has attached to it the group of automorphisms $\Diff (M,\alpha) \subset \Diff (M,\xi)$ of diffeomorphisms preserving the contact form, and the subgroup of time-one maps $\Diff_0 (M,\alpha) \subset \Diff (M,\alpha)$ of isotopies generated by a smooth strictly contact vector field or basic function on $(M,\alpha)$.
In the theory of dynamical systems, elements of the above automorphism groups are also known as transformations, and the time-one maps are configurations of dynamical systems, preserving the additional geometric structure.

The study of such automorphism groups, and in particular of the subgroups of time-one maps, associated to a smooth or topological structure and to various geometric structures, such as a volume or symplectic form, a contact structure or contact form, or a good measure, has a long and fruitful history, see for instance the monograph \cite{banyaga:scd97} for a comprehensive account of this area of study.
Several well-known phenomena in contact topology and more recent developments suggest that the above groups of diffeomorphisms are too restrictive.
See below for details.

In this article, we consider the more general groups $\Aut (M,\xi)$ and $\Aut (M,\alpha)$ of topological automorphisms of a contact structure and a contact form, and their subgroups $\Homeo (M,\xi)$ and $\Homeo (M,\alpha)$ of time-one maps of topological contact and strictly contact isotopies that emerge in the context of topological contact dynamics \cite{ms:tcd11}.
Compare to the references \cite{mueller:ghh07, mueller:ghh08, mueller:ghl08, banyaga:uch12} for similar topological automorphism groups in topological Hamiltonian dynamics and topological strictly contact dynamics.

Topological contact dynamics is a natural extension of the dynamics of a smooth contact vector field or function to topological dynamics, and is the odd-dimensional analog to topological Hamiltonian dynamics developed in \cite{mueller:ghh07, mueller:ghh08, mueller:ghl08, viterbo:ugh06, buhovsky:ugh10}, and topological symplectic dynamics considered in \cite{banyaga:gsh08}.
It is also a non-trivial generalization of topological strictly contact dynamics from \cite{banyaga:uch12}.
As in the smooth theories, topological Hamiltonian dynamics of an integral symplectic manifold $(B,\omega)$ is intimately related to topological  strictly contact dynamics of the total space of the associated prequantization bundle with base $B$ \cite{banyaga:uch12}, and in turn, topological contact dynamics of a contact manifold $(M,\xi)$ corresponds to topological Hamiltonian dynamics of the symplectization of $(M,\alpha)$, where $\alpha$ is a contact form with $\ker \alpha = \xi$ \cite{ms:tcd11}.
See the cited references for details.

The above theories have numerous applications to their smooth counterparts, see e.g.\ \cite{ms:tcd11, ms:tcd12} or section~\ref{sec:topo-contact-dyn}, and to topological dynamics in dimensions two and three \cite{ms:hvf11}, by extending a priori smooth invariants to topological Hamiltonian and contact dynamical systems or their time-one maps.
An extensive motivation for the study of topological Hamiltonian dynamics can be found in \cite{mueller:ghh07, mueller:ghh08}, while topological symplectic dynamics and the two flavors of topological contact dynamics mentioned in this introduction are to a large extent motivated by the close connections with their Hamiltonian counterpart.
We recall in particular $C^0$-rigidity of symplectic diffeomorphisms \cite{eliashberg:tsw87, gromov:pdr86, gromov:shs86}, $C^0$-rigidity of strictly contact and contact diffeomorphisms and their conformal factors \cite{ms:tcd11}, and $C^0$-rigidity of Hamiltonian, strictly contact, and contact isotopies and their conformal factors, see \cite{mueller:ghh07, banyaga:uch12, ms:tcd11} or again section~\ref{sec:topo-contact-dyn}.

Additional motivation comes from several implications of the energy-capacity inequality in contact dynamics that is proved in \cite{ms:tcd11}.
It follows that the group of diffeomorphisms preserving a contact form admits a bi-invariant metric \cite{ms:tcd11}.
This generalizes a previous theorem in \cite{banyaga:lci06} establishing the existence of classical diffeomorphism groups other than the group of Hamiltonian diffeomorphisms that support non-degenerate bi-invariant pseudo-metrics.

This article is part of a series of papers concerning topological contact dynamics initiated in \cite{ms:tcd11}.
In section~\ref{sec:contact-geom} we recall the basic notions of contact geometry and smooth contact dynamics.
Sections~\ref{sec:topo-contact-dyn} and \ref{sec:topo-auto} then review topological contact and strictly contact dynamics, and topological automorphisms of a contact structure and a contact form, including the transformation law, a detailed motivation, and some of the applications to contact geometry and smooth contact dynamics that support the point of view that the groups $\Aut (M,\xi)$ and $\Aut (M,\alpha)$ are the correct automorphism groups of a contact structure and a contact form, respectively.

In sections~\ref{sec:contact-homeo} and \ref{sec:contact-topo} we study topological properties of the groups of contact and strictly contact homeomorphisms.
The main focus is on the topology induced by the time-one evaluation map.
When equipped with this contact topology, the groups of time-one maps become first countable topological groups, and therefore admit left invariant metrics.
Moreover, these groups are path-connected and locally path-connected, and any two contact or strictly contact homeomorphisms can be connected by a (short) topological contact or strictly contact isotopy.

The definition of the contact distance on the group of smooth contact dynamical systems involves a choice of norm on the space of tangent vectors to isotopies of contact diffeomorphisms, leading to the notions of topological and continuous contact dynamical systems, and similarly for strictly contact dynamical systems.
Another main focus in this article is on this choice of norm, and centers around the main lemma stated in section~\ref{sec:contact-homeo}.
In brief, we prove that up to an arbitrarily small perturbation with fixed end points, the choice of norm is not relevant.
As a consequence, the groups of time-one maps are independent of the choice of norm.
Sections~\ref{sec:regular} and \ref{sec:rep} set the stage for the proof of the main lemma in section~\ref{sec:proof-main}.
In particular, we show that a generic smooth contact isotopy is regular, i.e.\ the isotopy is never stationary, and investigate how to reparameterize and perturb a given contact isotopy to have certain desirable properties, for instance to traverse its path in the group of contact diffeomorphisms at nearly constant speed.
Moreover, we construct multi-parameter variations of the constant loop that seem to be of independent interest.
Similar results are also proved for strictly contact isotopies.

In section~\ref{sec:energy-capacity} we show how the energy-capacity inequality from \cite{ms:tcd11} leads to a contact energy function and a bi-invariant metric on the group of strictly contact homeomorphisms, with no restrictions on the contact form.
This metric is an analog of the bi-invariant metric on the group of strictly contact diffeomorphisms in \cite{banyaga:lci06, ms:tcd11}.
We moreover demonstrate that the coarse and fine contact energy, and hence the resulting metrics, coincide.

In the final section~\ref{sec:non-smooth}, we produce examples of contact homeomorphisms that fail to be Lipschitz continuous, and in particular are not $C^1$-smooth, and examples of smooth contact vector fields that are topologically conjugate but not conjugate by contact $C^1$-diffeomorphisms.
In particular, for any contact manifold with arbitrary contact form, topological contact dynamics is a natural and genuine extension of the dynamics of a smooth contact vector field or function to topological dynamics.

The last two sections can each be read independently of the rest of the paper immediately after reviewing the relevant definitions in sections~\ref{sec:topo-contact-dyn}, \ref{sec:topo-auto}, and \ref{sec:contact-homeo}.

\section{Review of contact geometry and contact dynamics} \label{sec:contact-geom}
Let $(M,\xi)$ be a smooth manifold of dimension $2 n + 1$, equipped with a \emph{contact structure} $\xi$, that is, a coorientable (or transversally orientable) nowhere integrable (or maximally non-integrable) field of hyperplanes $\xi \subset TM$.
That means the contact structure can be defined as the kernel $\xi = \ker \alpha$ of a one-form $\alpha$ such that $\alpha \wedge (d\alpha)^n$ is a volume form.
Such a one-form is called a \emph{contact form} on $(M,\xi)$.
For simplicity, we assume throughout this article that $M$ is closed, i.e.\ compact and without boundary, and connected.
See the article \cite{ms:tcd11} for the necessary adjustments to be made for general contact manifolds.

A \emph{diffeomorphism} $\phi$ of $M$ is by definition \emph{contact} if it preserves the field of hyperplanes $\xi \subset TM$, i.e.\ $\phi_* \xi = \xi$, or equivalently, $\phi^* \alpha = e^h \alpha$ for a smooth function $h$ on $M$.
A smooth \emph{isotopy} $\Phi = \{ \phi_t \}_{0 \le t \le 1}$ is called \emph{contact} if $\phi_t$ is a contact diffeomorphism for all times $t$.
Denote by $X = \{ X_t \}_{0 \le t \le 1}$ the time-dependent smooth vector field generating the isotopy $\Phi$ in the sense
\begin{align} \label{eqn:inf-gen}
	\frac{d}{dt} \phi_t = X_t \circ \phi_t.
\end{align}
Then $\Phi$ is contact if and only if $\L_{X_t} \alpha = \mu_t \: \alpha$ for a time-dependent smooth function $\mu$ on $M$, where $\L$ denotes the Lie derivative.
In that case, the smooth vector field $X$ is called a \emph{contact vector field}, and the smooth function $H \colon [0,1] \times M \to \R$ defined by $H_t = \alpha (X_t)$ is called its \emph{Hamiltonian}.
The restriction of the two-form $d\alpha$ to the sub-bundle $\xi$ is non-degenerate.
Thus the preceding formula defines a 1--1 correspondence between contact vector fields and smooth functions.
An important contact vector field is the \emph{Reeb vector field} $R = R_\alpha$, the unique vector field in the kernel of $d\alpha$ that is normalized by the identity $\alpha (R_\alpha) = 1$.
More generally, the vector field $X_H = \{ X_H^t \}$ is defined uniquely by the relations
\begin{align} \label{eqn:contact-ham}
	\iota (X_H^t) \alpha = H_t \ \ \mbox{and} \ \ \iota (X_H^t) d\alpha = (R_\alpha . H_t) \alpha - dH_t,
\end{align}
where $R_\alpha . H_t = dH_t (R_\alpha)$.
Then $X_H$ is contact with Hamiltonian $H$ and $\mu_t = R_\alpha . H_t$.
Here $\iota$ denotes interior multiplication by a vector field.
We write $\Phi = \Phi_H$ for the contact isotopy generated by a contact vector field $X = X_H$ with Hamiltonian $H$.
The smooth function $h \colon [0,1] \times M \to \R$ determined by $\phi_t^* \alpha = e^{h_t} \alpha$ is called the \emph{conformal factor} of the isotopy $\Phi = \Phi_H$, and
\begin{align} \label{eqn:conformal-factor}
	h_t = \int_0^t (R_\alpha . H_s) \circ \phi_H^s \, ds.
\end{align}

A triple $(\Phi, H, h)$ is called a smooth \emph{contact dynamical system} if $\Phi = \Phi_H$ is a smooth contact isotopy with Hamiltonian $H$ and conformal factor $h$.
We will usually denote the Hamiltonian by an upper case Roman letter, and the conformal factor by the corresponding lower case letter.
The group of contact diffeomorphism is denoted by $\Diff (M,\xi)$, and $\Diff_0 (M,\xi)$ denotes its identity component.

If $f$ is a smooth function on $M$, then the one-form $e^f \alpha$ defines another contact form on $(M,\xi)$.
We fix a coorientation of $\xi$, and hence an orientation of $M$.
Then every contact form with kernel $\xi$ can be written in this way.
The relation between a contact isotopy and its Hamiltonian and conformal factor, and in particular the definition of the Reeb vector field, depend on this choice of contact form $\alpha$.
However, the notions of contact diffeomorphism and contact vector field depend only on the contact structure $\xi$.
More precisely, if $(\Phi, H, h)$ is a smooth contact dynamical system with respect to the contact form $\alpha$, then $(\Phi, e^f H, h + (f \circ \Phi - f))$ is the corresponding smooth contact dynamical system with respect to the contact form $e^f \alpha$.
Here and in the following, for brevity the expression $h + (f \circ \Phi - f)$ denotes the function whose value at $(t,x) \in [0,1] \times M$ is
	\[ (h + (f \circ \Phi - f)) (t,x) = h_t (x) + (f (\phi_t (x)) - f (x)), \]
and similarly for the Hamiltonian $e^f H$.
For the present purposes, it suffices to fix a choice of contact form $\alpha$ with $\ker \alpha = \xi$ for the remainder of this article.
See the remarks in \cite{ms:tcd11} for details.

A contact \emph{diffeomorphism} $\phi$ is called \emph{strictly contact} if it in fact preserves the contact form $\alpha$, i.e.\ $\phi^* \alpha = \alpha$, or equivalently, it preserves the \emph{canonical volume form} $\nu_\alpha = \alpha \wedge (d\alpha)^n$ induced by $\alpha$.
An \emph{isotopy} $\{ \phi_t \}$ is by definition \emph{strictly contact} if each diffeomorphism $\phi_t$ is strictly contact, or in other words, its conformal factor $h \colon [0,1] \times M \to \R$ vanishes.
Thus a contact isotopy $\{ \phi_t \}$ is strictly contact if and only if its contact vector field $X = X_H$ preserves the contact form $\alpha$, or $\L_{X_t} \alpha = 0$, which in turn is equivalent to $R_\alpha . H_t = 0$, for all times $t$.
The latter means $H_t$ is invariant under the Reeb flow, and such a smooth function $H$ is called \emph{basic}.
The group of strictly contact diffeomorphisms is henceforth denoted by $\Diff (M, \alpha) \subset \Diff (M, \xi)$, its identity component is the group $\Diff_0 (M, \alpha)$, and $(\Phi, H)$ or $(\Phi, H, 0)$ is called a smooth \emph{strictly contact dynamical system} if $\Phi = \Phi_H$ is a smooth strictly contact isotopy with basic Hamiltonian $H$.

For later reference, we recall two well-known lemmas in contact geometry that are used frequently in \cite{ms:tcd11} and in this work.

\begin{lem}[Conformal factors] \label{lem:conformal-factor}
If $\phi$ and $\psi$ are contact diffeomorphisms of $(M,\xi)$ with conformal factors $h$ and $g$, respectively, i.e.\ $\phi^* \alpha = e^h \alpha$ and $\psi^* \alpha = e^g \alpha$ for smooth functions $h$ and $g$ on $M$, then
	\[ (\phi \circ \psi)^* \alpha = e^{h \circ \psi + g} \alpha \ \ \mbox{and} \ \ (\phi^{-1})^* \alpha = e^{- h \circ \phi^{-1}} \alpha. \]
\end{lem}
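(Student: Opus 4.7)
The plan is to derive both formulas from the functoriality of pullback together with the elementary identity $\psi^*(f \cdot \beta) = (f \circ \psi) \cdot \psi^* \beta$, valid for any smooth function $f$ and any differential form $\beta$. For the first identity I would simply compute
\begin{align*}
(\phi \circ \psi)^* \alpha = \psi^* (\phi^* \alpha) = \psi^*(e^h \alpha) = (e^h \circ \psi) \cdot \psi^* \alpha = e^{h \circ \psi} \cdot e^g \alpha = e^{h \circ \psi + g} \alpha,
\end{align*}
using the hypotheses $\phi^* \alpha = e^h \alpha$ and $\psi^* \alpha = e^g \alpha$ at the appropriate steps.

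For the second identity I would bootstrap from the first. Since $\phi^{-1}$ is again a contact diffeomorphism, there exists a smooth function $k$ on $M$ with $(\phi^{-1})^* \alpha = e^k \alpha$, and the goal is to identify $k$ with $-h \circ \phi^{-1}$. Applying the first identity to the composition $\phi \circ \phi^{-1} = \id_M$, whose pullback of $\alpha$ is $\alpha$, yields
\begin{align*}
\alpha = (\phi \circ \phi^{-1})^* \alpha = e^{h \circ \phi^{-1} + k} \alpha,
\end{align*}
so that $h \circ \phi^{-1} + k \equiv 0$, which gives $k = -h \circ \phi^{-1}$ as desired.

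There is no real obstacle here; the lemma is a direct consequence of pullback being a contravariant functor on differential forms that is $C^\infty(M)$-linear in the evident sense. The only point worth flagging is the use of the fact that the inverse of a contact diffeomorphism is a contact diffeomorphism, so that the conformal factor of $\phi^{-1}$ exists as a smooth function to begin with; this is standard and follows because $(\phi^{-1})_* \xi = \xi$ whenever $\phi_* \xi = \xi$.
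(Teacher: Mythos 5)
Your proof is correct and is the standard argument: the first identity follows from contravariance of pullback and the module property $\psi^*(f\beta)=(f\circ\psi)\,\psi^*\beta$, and the second follows by applying the first to $\phi\circ\phi^{-1}=\id$. The paper in fact states this lemma without proof, as a ``well-known'' fact recalled from contact geometry, and your computation is exactly the argument the authors are implicitly invoking.
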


We often write $H \mapsto \Phi_H$ if $H$ is the Hamiltonian corresponding to the smooth contact isotopy $\Phi_H = \{ \phi_H^t \}$.

\begin{lem}[Hamiltonians] \label{lem:contact-ham}
If $H \mapsto \Phi_H$ and $F \mapsto \Phi_F$, then
\begin{align*}
	& H \# F \mapsto \Phi_H \circ \Phi_F,  & (H \# F)_t = H_t + \left( e^{h_t} \cdot F_t \right) \circ (\phi_H^t)^{-1}, \\
	& \Hbar \mapsto \Phi_H^{-1},  & \Hbar_t = - e^{- h_t} \cdot \left( H_t \circ \phi_H^t \right), \\
	& \Hbar \# F \mapsto \Phi_H^{-1} \circ \Phi_F,  & (\Hbar \# F)_t = e^{- h_t} \cdot \left( (F_t - H_t) \circ \phi_H^t \right), \\
	& K \mapsto \phi^{-1} \circ \Phi_H \circ \phi, & K_t = e^{-g} \left( H_t \circ \phi \right),
\end{align*}
where $\phi \in \Diff (M,\xi)$ with $\phi^*\alpha = e^g \alpha$.
Here composition $\circ$ and inversion are to be understood as composition and inversion of the diffeomorphisms at each time $t$.
\end{lem}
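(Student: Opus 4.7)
The plan is to derive all four identities from the defining relation $H_t = \alpha(X_H^t)$ and the infinitesimal generator equation~\eqref{eqn:inf-gen}, using the pullback identity $(\phi_H^t)^*\alpha = e^{h_t}\alpha$ together with Lemma~\ref{lem:conformal-factor}. Once the composition identity is established, the remaining three follow by substitution and formal manipulation, so the main calculation concerns composition.

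For $\Psi_t = \phi_H^t \circ \phi_F^t$, the chain rule applied to~\eqref{eqn:inf-gen} yields $\frac{d}{dt}\Psi_t = Y_t \circ \Psi_t$ with generator $Y_t = X_H^t + (\phi_H^t)_* X_F^t$. Contracting with $\alpha$ produces $H_t$ from the first term, while for the second I would use the pushforward identity $\alpha_y((\phi_H^t)_* v) = ((\phi_H^t)^*\alpha)_{(\phi_H^t)^{-1}(y)}(v)$, combined with $(\phi_H^t)^*\alpha = e^{h_t}\alpha$ and $\alpha(X_F^t) = F_t$, to obtain $(e^{h_t} F_t)\circ (\phi_H^t)^{-1}$, confirming the first identity.

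The inverse formula is cleanest to derive from the composition identity applied to $\Phi_H \circ \Phi_H^{-1} = \id$, which has Hamiltonian $0$. Setting $(H \# \Hbar)_t \equiv 0$ and solving for $\Hbar_t$ yields $\Hbar_t = -e^{-h_t}(H_t \circ \phi_H^t)$. The formula for $\Hbar \# F$ is then a direct substitution of this $\Hbar$ together with the conformal factor $\bar h_t = -h_t \circ (\phi_H^t)^{-1}$ of $\Phi_H^{-1}$ (obtained from Lemma~\ref{lem:conformal-factor}) into the composition formula; the cancellation $(e^{\bar h_t} F_t)\circ \phi_H^t = e^{-h_t}(F_t \circ \phi_H^t)$ collapses the two terms into the asserted $e^{-h_t}((F_t - H_t)\circ \phi_H^t)$.

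For the conjugation $\Psi_t = \phi^{-1} \circ \phi_H^t \circ \phi$, rewriting this as $\phi \circ \Psi_t = \phi_H^t \circ \phi$ and differentiating in $t$ gives $d\phi \cdot Y_t \circ \Psi_t = X_H^t \circ \phi \circ \Psi_t$, so $Y_t = (\phi^{-1})_* X_H^t$. Contracting with $\alpha$ and using $\phi^*\alpha = e^g \alpha$ in the form $\alpha_y((\phi^{-1})_* X_H^t) = e^{-g(y)} H_t(\phi(y))$ produces $K_t = e^{-g}(H_t \circ \phi)$. None of these steps presents a genuine obstacle; the only care required is consistent bookkeeping of base points and of whether conformal factors are pre- or post-composed with $\phi_H^t$ or $\phi$.
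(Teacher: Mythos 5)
Your derivation is correct in every part. The paper itself does not prove Lemma~\ref{lem:contact-ham}; it merely states it as "well-known," and the only place where a derivation appears is in Section~\ref{sec:topo-auto}, where the conjugation identity (with $\psi$ in place of $\phi^{-1}$) is verified as motivation for Definition~\ref{dfn:topo-auto}. That argument splits the contact vector field as $X_H = Y_H + H R_\alpha$ with $Y_H \in \xi$, uses $\psi_*\xi = \xi$ to kill the contribution of $Y_H$, and then computes $\alpha(\psi_* R_\alpha) = e^{g\circ\psi^{-1}}$ directly. Your treatment of that same identity is a direct pullback computation, $\alpha_y\big((\phi^{-1})_* X_H^t\big) = \big((\phi^{-1})^*\alpha\big)_{\phi(y)}\big(X_H^t(\phi(y))\big)$ together with Lemma~\ref{lem:conformal-factor}, which is slightly shorter but equivalent; the paper's version has the small pedagogical advantage of making visible that only the Reeb component matters, which is the theme of Section~\ref{sec:topo-auto}. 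For the composition formula, your use of the two-term chain-rule expansion $Y_t = X_H^t + (\phi_H^t)_* X_F^t$ followed by contraction with $\alpha$ is the standard route, and your trick of solving $(H\#\Hbar)_t \equiv 0$ for $\Hbar_t$, rather than differentiating $\Phi_H^{-1}$ directly, is economical and avoids introducing the conformal factor of $\Phi_H^{-1}$ before it is needed. The bookkeeping in the $\Hbar\#F$ step (using $\bar h_t = -h_t\circ(\phi_H^t)^{-1}$ and $(\phi_{\Hbar}^t)^{-1} = \phi_H^t$ so that $e^{\bar h_t}\circ\phi_H^t = e^{-h_t}$) is handled correctly. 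All told the proposal gives a complete proof where the paper gives only an allusion, and where the two overlap (the conjugation case) the approaches differ only by the choice of whether to decompose the vector field or to pull back the one-form.
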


The Lie algebra of the group $\Diff (M,\xi)$ can be identified with the space $C^\infty (M)$ of smooth functions on $M$ via the relation $\alpha (X) = H$.
A natural choice of norm on $C^\infty (M)$ is to define
	\[ \| H \| = \osc (H) + | c (H) | = \left( \max_{x \in M} H (x) - \min_{x \in M} H (x) \right) + \left| \frac{1}{\int_M \nu_\alpha} \cdot \int_M H \, \nu_\alpha \right|, \]
or in other words, the oscillation of the function $H$ plus the absolute value of its mean value with respect to the volume form $\nu_\alpha = \alpha \wedge (d\alpha)^n$ defined above.
For a time-dependent function $H \colon [0,1] \times M \to \R$, we consider two choices of norm, known as the \emph{$L^\oneinfty$-norm} and \emph{$L^\infty$-norm}, given by
\begin{align} \label{eqn:one-infty-norm}
	\| H \|_\oneinfty = \int_0^1 \| H_t \| \, dt
\end{align}
and
\begin{align} \label{eqn:infty-norm}
	\| H \|_\infty = \max_{0 \le t \le 1} \| H_t \|,
\end{align}
respectively.
The two norms $\| \cdot \|_\oneinfty \le \| \cdot \|_\infty$ play a crucial role in this paper.
In order to distinguish the two cases, the prefixes $L^\oneinfty$ and $L^\infty$ will be attached to various objects defined below, and the subscripts or superscripts $\oneinfty$ and $\infty$ will be used where appropriate.

For reasons explained in \cite{ms:tcd11}, we define the norm of the conformal factor $h \colon [0,1] \times M \to \R$ of a contact isotopy by the \emph{maximum norm}
\begin{align} \label{eqn:max-norm}
	| h | = \max_{0 \le t \le 1} \max_{x \in M} | h (t, x) |,
\end{align}
which is in fact equivalent to the norm (\ref{eqn:infty-norm}).

We sometimes also consider contact isotopies that are not necessarily based at the identity.
That is, we consider isotopies $\Phi = \psi \circ \Phi_H = \{ \psi \circ \phi_H^t \}$, where $\Phi_H$ is a contact isotopy in the usual sense, with $\phi_H^0 = \id$, and $\psi \in \Diff (M,\xi)$.
This isotopy solves the ordinary differential equation (\ref{eqn:inf-gen}) with (contact) vector field $\psi_* X_H$ and initial condition $\phi_0 = \psi$, and thus the Hamiltonian of $\Phi$ is the smooth function $F = (e^g \cdot H) \circ \psi^{-1}$, where $\psi^* \alpha = e^g \alpha$.
By a slight abuse of notation, we write $\Phi = \Phi_F$, but explicitly mention the time-zero map $\psi$ in this instance.
Note that one could also work with isotopies of the form $\Phi = \Phi_H \circ \psi$.
The latter solves the same ordinary differential equation as the isotopy $\Phi_H$ with initial condition $\phi_0 = \psi$.
If no explicit mention is made of the time-zero map of an isotopy, it is assumed to be the identity.

\section{Topological contact dynamics} \label{sec:topo-contact-dyn}
A choice of Riemannian metric determines a distance $d$ between points on $M$, and thus a distance between two homeomorphisms $\phi$ and $\psi$ of $M$ by
\begin{align} \label{eqn:riem-dist}
	d (\phi, \psi) = \max_{x \in M} d (\phi (x), \psi (x) ).
\end{align}
This metric induces the compact-open topology, and in particular, the actual choice of Riemannian metric is mostly irrelevant.
The metric in equation (\ref{eqn:riem-dist}) is not complete, but it gives rise to a complete metric $\dbar$ that also induces the compact-open topology on the group of homeomorphisms of $M$, where $\dbar$ is defined by
	\[ \dbar (\phi, \psi) = d (\phi, \psi) + d (\phi^{-1}, \psi^{-1}). \]
In both cases, one also obtains a distance between isotopies $\Phi = \{ \phi_t \}$ and $\Psi = \{ \psi_t \}$, equal to the maximum of the distance of the time-$t$ maps $\phi_t$ and $\psi_t$.
The metric
	\[ \dbar (\Phi, \Psi) = \max_{0 \le t \le 1} \dbar (\phi_t, \psi_t) \]
is again complete, whereas $d (\Phi, \Psi)$ is not complete.
However, if a sequence $\Phi_i$ of isotopies converges uniformly to an isotopy of homeomorphisms $\Phi$, i.e.\ a limit with respect to $d$ exists, then this sequence is also Cauchy with respect to $\dbar$, and converges to the isotopy $\Phi$.
The same remark applies to sequences of homeomorphisms.
The distance $\dbar$ is called the \emph{$C^0$-metric}.
Composition and inversion of diffeomorphisms and of isotopies (at each time $t$) are continuous with respect to the $C^0$-metric.

\begin{dfn}[Contact metric \cite{ms:tcd11}] \label{dfn:contact-metric}
The \emph{contact distance} between smooth contact isotopies $\Phi_H$ and $\Phi_F$ is the number
	\[ d_\alpha (\Phi_H, \Phi_F) = d_\alpha ((\Phi_H, H, h), (\Phi_F, F, f)) = \dbar (\Phi_H, \Phi_F) + | h - f | + \| H - F \|. \]
\end{dfn}

Here $\| \cdot \|$ denotes either one of the two norms (\ref{eqn:one-infty-norm}) or (\ref{eqn:infty-norm}), and the resulting distances are also called the $L^\oneinfty$-contact metric and the $L^\infty$-contact metric.

\begin{dfn}[Topological and continuous contact dynamical system \cite{ms:tcd11}]
Let $\Phi = \{ \phi_t \}$ be a continuous isotopy of homeomorphisms of $M$, and $H$ and $h \colon [0,1] \times M \to \R$ be time-dependent functions on $M$.
The triple $(\Phi, H, h)$ is called a \emph{topological contact dynamical system} if there exists a sequence of smooth contact dynamical systems $(\Phi_{H_i}, H_i, h_i)$, such that the smooth contact isotopies $\Phi_{H_i} = \{ \phi_{H_i}^t \}$ converge uniformly to the isotopy $\Phi$, the corresponding conformal factors converge uniformly to the continuous function $h$, and the sequence $H_i$ of Hamiltonians satisfies $\| H - H_i \|_\oneinfty \to 0$.
The function $H$ is called a \emph{topological Hamiltonian} with \emph{topological contact isotopy} $\Phi$ and \emph{topological conformal factor} $h$.
The set of topological contact dynamical systems is denoted by $\TCDS (M,\alpha)$.
The extension of the $L^\oneinfty$-contact metric $d_\alpha$ to $\TCDS (M,\alpha)$ is again denoted by $d_\alpha$, and the induced topology is called the \emph{contact topology}.

The triple $(\Phi, H, h)$ is called a \emph{continuous contact dynamical system} if the above sequence $H_i$ converges to $H$ with respect to the metric induced by the $L^\infty$-norm (\ref{eqn:infty-norm}).
In that case, the continuous function $H$ is called a \emph{continuous Hamiltonian} with \emph{continuous contact isotopy} $\Phi$ and \emph{continuous conformal factor} $h$.
The subset of continuous contact dynamical systems is denoted by $\CCDS (M,\alpha)$, $d_\alpha$ also denotes the extension of the $L^\infty$-contact metric to $\CCDS (M,\alpha)$, and the induced metric topology is again called the contact topology.

If in the sequences above each $h_i$ is zero, or in other words, the isotopies $\Phi_{H_i}$ are strictly contact and their Hamiltonians are basic, then $(\Phi_H, H)$ is by definition a \emph{topological} or \emph{continuous strictly contact dynamical system}, respectively \cite{banyaga:uch12}.
The resulting sets $\TSCDS (M,\alpha)$ and $\CSCDS (M,\alpha)$ carry induced contact metrics and contact topologies.
Their elements are sometimes also denoted by $(\Phi_H, H, 0)$.
\end{dfn}

We recall that the metric $\dbar$, and the metrics induced by the norms $| \cdot |$ and $\| \cdot \|$, are complete in the following sense: every Cauchy sequence of isotopies with respect to the $C^0$-metric converges uniformly to a continuous isotopy of homeomorphisms, and a Cauchy sequence of time-dependent continuous functions with respect to the metric induced by either of the norms $| \cdot |$ or $\| \cdot \|$ converges to a time-dependent function on $M$, which is continuous in the case of the maximum norm (\ref{eqn:max-norm}) or the $L^\infty$-norm (\ref{eqn:infty-norm}).
In particular, a topological or continuous contact dynamical system is defined uniquely by an equivalence class of Cauchy sequences of smooth contact dynamical systems, and similarly for strictly contact dynamical systems.

In the $L^\oneinfty$-case, the topological Hamiltonian $H \colon [0,1] \times M \to \R$ may not be continuous but only $L^1$ in the variable $t \in [0,1]$.
However, by standard arguments from measure theory, $H_t$ is defined for almost all $t \in [0,1]$, and is a continuous function of the space variable $x \in M$ for each such $t$.
A topological Hamiltonian can be thought of as an element of the space of functions $L^1 ([0,1],C^0 (M))$ of $L^1$-functions of the unit interval taking values in the space $C^0 (M)$ of continuous functions of $M$.
Strictly speaking, such an object is an equivalence class of functions, where two functions are considered equivalent if and only if they agree for almost all $t \in [0, 1]$, but as is customary in measure theory, we will mostly disregard this subtlety in our treatment, and speak of an $L^\oneinfty$-function.
Such functions $H$ can be defined to be any continuous function at the remaining times $t$ belonging to a set of measure zero.

In order to avoid lengthy and repetitive definitions and statements, we restrict attention to topological contact dynamical systems where possible, and consider continuous contact dynamical systems in short remarks after the conclusion of a particular statement or cohesive discussion.

It is shown in \cite{ms:tcd11} that the isotopy $\Phi_H$ and the topological conformal factor $h$ are \emph{uniquely determined} by the function $H$.

\begin{thm}[Uniqueness of topological contact isotopy and topological conformal factor \cite{ms:tcd11}] \label{thm:unique}
If $(\Phi, H, h)$ and $(\Psi, H, g)$ are two topological contact dynamical systems with the same topological Hamiltonian $H$, then $\Phi = \Psi$ and $h = g$.
\end{thm}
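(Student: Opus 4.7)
The plan is to reduce the uniqueness statement to a $C^0$-rigidity theorem from \cite{ms:tcd11}, which asserts that a topological contact dynamical system whose topological Hamiltonian vanishes must be the trivial system $(\id, 0, 0)$. Granted that rigidity, the uniqueness claim follows by exhibiting the ``ratio'' of the two given systems as a topological contact dynamical system with Hamiltonian zero.

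Concretely, I would fix defining Cauchy sequences $(\Phi_{H_i}, H_i, h_i) \to (\Phi, H, h)$ and $(\Phi_{F_i}, F_i, g_i) \to (\Psi, H, g)$ of smooth contact dynamical systems. After reindexing I may assume the two are indexed by the same $i$, and since $H_i \to H$ and $F_i \to H$ in the $L^\oneinfty$-norm, $\| H_i - F_i \|_\oneinfty \to 0$. Next I would form the composition $\Phi_{H_i}^{-1} \circ \Phi_{F_i}$. By Lemma~\ref{lem:contact-ham} its generating Hamiltonian is
\[
(\Hbar_i \# F_i)_t = e^{-h_i(t)} \cdot \bigl( (F_i(t) - H_i(t)) \circ \phi_{H_i}^t \bigr),
\]
and by Lemma~\ref{lem:conformal-factor} its conformal factor at time $t$ equals $g_i(t) - h_i(t) \circ (\phi_{H_i}^t)^{-1} \circ \phi_{F_i}^t$.

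The next step is to verify that this sequence of composed smooth contact dynamical systems converges in the sense of the definition. The conformal factors $h_i$ are uniformly bounded because they converge uniformly, so a direct estimate, together with the uniform bound on the Jacobian $(\phi_{H_i}^t)^* \nu_\alpha / \nu_\alpha = e^{(n+1)h_i(t)}$ controlling the mean-value part of the norm, yields $\| \Hbar_i \# F_i \|_\oneinfty \to 0$. Continuity of composition and inversion in the $C^0$-metric gives $\Phi_{H_i}^{-1} \circ \Phi_{F_i} \to \Phi^{-1} \circ \Psi$ uniformly, and uniform convergence of $h_i, g_i$ and of $\Phi_{H_i}, \Phi_{F_i}$ implies uniform convergence of the composed conformal factors to some continuous function $k$ on $[0,1]\times M$, whose value at time $t$ is $g_t - h_t \circ \phi_t^{-1} \circ \psi_t$. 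Thus $(\Phi^{-1}\circ \Psi,\, 0,\, k)$ is a topological contact dynamical system.

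Applying the $C^0$-rigidity result from \cite{ms:tcd11} then forces $\Phi^{-1}\circ\Psi \equiv \id$ and $k \equiv 0$. The first conclusion gives $\Phi = \Psi$, and substituting $\psi_t = \phi_t$ in the explicit formula for $k_t$ yields $g_t = h_t \circ \phi_t^{-1}\circ\phi_t = h_t$, so $h = g$. The main obstacle is external to this argument: it lies entirely in the rigidity input, whose proof in \cite{ms:tcd11} rests on the contact energy--capacity inequality and must moreover control the conformal factor in tandem with the isotopy. Given that input, the remaining work is the careful but essentially mechanical manipulation of the smooth contact algebra encoded in Lemmas~\ref{lem:conformal-factor} and \ref{lem:contact-ham}, together with the elementary continuity properties of the $C^0$- and $L^\oneinfty$-metrics under uniform bounds on the conformal factor.
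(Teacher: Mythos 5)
Your reduction is sound, and it matches the argument used in \cite{ms:tcd11} (the present paper does not re-prove the theorem but only recalls it from there). The one potential pitfall, which you correctly identify and defuse, is circularity: the paper presents the ``Rigidity of contact isotopies'' Corollary as a \emph{special case} of the Uniqueness Theorem, so the zero-Hamiltonian input you need --- a topological contact dynamical system with $H \equiv 0$ is $(\id,0,0)$ --- must be established on its own via the contact energy-capacity inequality, not deduced from the theorem being proved. A small simplification in your estimate for $\Hbar_i \# F_i$: instead of tracking the mean-value part of the norm through the Jacobian $e^{(n+1)h_i^t}$, one can use the inequality $|\cdot| \le \|\cdot\| < 3\,|\cdot|$ for time-independent functions (from \cite{ms:tcd11}, and used in Step~1 of the proof of Lemma~\ref{lem:main} in this paper), which gives directly
\[
\|(\Hbar_i \# F_i)_t\| \le 3\,|e^{-h_i^t}\cdot (F_i^t - H_i^t)\circ\phi_{H_i}^t| \le 3 e^{|h_i|}\,|F_i^t - H_i^t| \le 3 e^{|h_i|}\,\|F_i^t - H_i^t\|,
\]
whence $\|\Hbar_i\#F_i\|_\oneinfty \to 0$ using only the uniform bound on $h_i$. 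Your treatment of the composed conformal factors, using uniform continuity of the limit $h$ together with $C^0$-convergence of the isotopies, is also correct, and the final substitution $\psi_t = \phi_t$ into $k_t = g_t - h_t\circ\phi_t^{-1}\circ\psi_t$ closes the argument.
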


This result justifies writing $\Phi = \Phi_H$ for the limit of the smooth isotopies $\Phi_{H_i}$.
As a special case, we have the following rigidity result for smooth contact isotopies and their conformal factors.

\begin{cor}[Rigidity of contact isotopies and their conformal factors \cite{ms:tcd11}]
Suppose $(\Phi_{H_i}, H_i, h_i)$ is a Cauchy sequence of smooth contact dynamical systems, and further suppose that the Hamiltonians $H_i$ converge to a time-dependent \emph{smooth} function $H$ on $M$.
Then $\Phi_{H_i} \to \Phi_H$ and $h_i \to h$ uniformly, where $\Phi_H$ is the smooth contact isotopy generated by the smooth contact vector field $X_H$, and the smooth function $h$ is given by $(\phi_H^t)^* \alpha = e^{h_t} \alpha$.
\end{cor}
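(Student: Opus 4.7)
The plan is to reduce the corollary to the uniqueness theorem (Theorem~\ref{thm:unique}). The given hypotheses exactly match the two pieces of data we need: a Cauchy sequence in the contact metric yielding some topological limit, and a smooth Hamiltonian that gives rise to a second, a priori distinct topological contact dynamical system.

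First I would use completeness of the contact metric $d_\alpha$ on $\TCDS (M,\alpha)$. By assumption $(\Phi_{H_i}, H_i, h_i)$ is Cauchy, so the isotopies $\Phi_{H_i}$ are Cauchy in $\dbar$, the conformal factors $h_i$ are Cauchy in the maximum norm (\ref{eqn:max-norm}), and the Hamiltonians $H_i$ are Cauchy in the $L^\oneinfty$-norm. By the completeness statements recalled after the definition of $\TCDS(M,\alpha)$, the $\Phi_{H_i}$ converge uniformly to a continuous isotopy of homeomorphisms $\Phi$, the $h_i$ converge uniformly to a continuous function $h'$, and the $H_i$ converge in $L^\oneinfty$ to some limit. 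By hypothesis this $L^\oneinfty$-limit is the smooth function $H$ (convergence in $L^\infty$ implies convergence in $L^\oneinfty$ if one is given the stronger assumption; if only $L^\oneinfty$ is assumed one uses uniqueness of $L^\oneinfty$-limits). Hence $(\Phi, H, h')$ is a topological contact dynamical system with topological Hamiltonian $H$.

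Next I would observe that since $H$ is smooth, the usual ODE~(\ref{eqn:inf-gen}) with contact vector field $X_H$ defined by (\ref{eqn:contact-ham}) produces a smooth contact isotopy $\Phi_H$ with smooth conformal factor $h$ given by (\ref{eqn:conformal-factor}). The constant sequence $(\Phi_H, H, h)$ is trivially an approximating sequence for itself, so $(\Phi_H, H, h)$ is also a topological contact dynamical system, again with topological Hamiltonian $H$.

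Now I would invoke Theorem~\ref{thm:unique}: two topological contact dynamical systems with the same topological Hamiltonian $H$ must have identical isotopy and conformal factor. Applied to $(\Phi, H, h')$ and $(\Phi_H, H, h)$ this gives $\Phi = \Phi_H$ and $h' = h$. Therefore $\Phi_{H_i} \to \Phi_H$ uniformly and $h_i \to h$ uniformly, as claimed.

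The only potentially delicate step is the first one, identifying the $L^\oneinfty$-limit of the $H_i$ with the given smooth function $H$; this is a matter of standard uniqueness of limits in the $L^\oneinfty$-completion, once one notes that smooth functions are in particular representatives of $L^\oneinfty$-equivalence classes. Everything else is a direct application of the completeness and uniqueness machinery already set up in the paper, so no genuine obstacle remains.
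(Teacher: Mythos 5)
Your proof is correct and follows exactly the route the paper intends: the corollary is stated immediately after Theorem~\ref{thm:unique} as "a special case," and your argument — extract the topological limit $(\Phi, H, h')$ via completeness (cf.\ Lemma~\ref{lem:c-one-approx-tcds}), note that $(\Phi_H, H, h)$ with $H$ smooth is a topological contact dynamical system via the constant approximating sequence, and apply uniqueness — is precisely that reduction. No gaps.
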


In other words, if $\| H - H_i \| \to 0$, and the limit $H$ happens to be a smooth function, then the a priori only continuous limits of the sequences $\Phi_{H_i}$ and $h_i$ must be smooth as well, and coincide with the contact isotopy and conformal factor generated by the limit isotopy $H$.

Moreover, the set of triples $(\Phi_H, H, h)$, where $H$ is a topological Hamiltonian with topological contact isotopy $\Phi_H$ and topological conformal factor $h$, forms a topological group.

\begin{thm}[\cite{ms:tcd11}] \label{thm:topo-gp}
The space $\TCDS (M,\alpha)$ of topological contact dynamical systems with the contact topology forms a topological group under the operation $\circ$ determined by the formula
	\[ (\Phi_H, H, h)^{-1} \circ (\Phi_F, F, f) = (\Phi_H^{-1} \circ \Phi_F, e^{- h} \cdot ((F - H) \circ \Phi_H), f - h \circ \Phi_H^{-1} \circ \Phi_F). \]
The identity element is $(\id, 0, 0)$, and the space of smooth contact dynamical systems with its usual composition forms a topological subgroup.
The subspace $\TSCDS (M,\alpha)$ of topological strictly contact dynamical systems is a topological subgroup, and in turn, the space of smooth strictly contact dynamical systems forms a topological subgroup of $\TSCDS (M,\alpha)$ and $\TCDS (M,\alpha)$.
\end{thm}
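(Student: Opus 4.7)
The plan is to first observe that the stated formula is a direct consequence of Lemma~\ref{lem:contact-ham} together with Lemma~\ref{lem:conformal-factor} applied to smooth contact isotopies: by Lemma~\ref{lem:contact-ham} the Hamiltonian $\Hbar \# F$ of $\Phi_H^{-1} \circ \Phi_F$ is exactly $e^{-h}((F - H) \circ \Phi_H)$, while by Lemma~\ref{lem:conformal-factor} the conformal factor of $\Phi_H^{-1} \circ \Phi_F$ equals $f - h \circ \Phi_H^{-1} \circ \Phi_F$. Thus on smooth contact dynamical systems the operation $\circ$ defined by the formula coincides with the usual composition of isotopies together with the induced Hamiltonian and conformal factor, and the group axioms are automatic on this subspace. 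The remaining work is to extend $\circ$ to $\TCDS(M,\alpha)$ by continuity, verify that the extension is still a group operation, and check continuity with respect to the contact metric $d_\alpha$.

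For well-definedness, I would fix defining Cauchy sequences of smooth systems $(\Phi_{H_i}, H_i, h_i) \to (\Phi_H, H, h)$ and $(\Phi_{F_i}, F_i, f_i) \to (\Phi_F, F, f)$, form the smooth compositions $(\Phi_{H_i}, H_i, h_i)^{-1} \circ (\Phi_{F_i}, F_i, f_i)$, and show that their three components converge to the right-hand side of the formula independently of the approximating sequences. Uniform convergence $\Phi_{H_i}^{-1} \circ \Phi_{F_i} \to \Phi_H^{-1} \circ \Phi_F$ follows from continuity of composition and inversion of isotopies in the $C^0$-metric $\dbar$. Combining this with the uniform convergences $h_i \to h$ and $f_i \to f$ yields convergence of the conformal factors $f_i - h_i \circ \Phi_{H_i}^{-1} \circ \Phi_{F_i}$ in the maximum norm $| \cdot |$.

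The main obstacle is the $L^\oneinfty$-convergence of the Hamiltonians $G_i = e^{-h_i}((F_i - H_i) \circ \Phi_{H_i})$ to $G = e^{-h}((F - H) \circ \Phi_H)$. The essential tool is the elementary estimate $\|G \circ \phi\| \le 2\|G\|$ valid for any continuous $G$ on $M$ and any homeomorphism $\phi$, since oscillation is preserved under composition and $|c(G \circ \phi)| \le \|G\|_{C^0} \le \osc(G) + |c(G)| = \|G\|$. Writing
\[
G_i^t - G^t = \bigl(e^{-h_i^t} - e^{-h^t}\bigr)\bigl((F_i - H_i)^t \circ \Phi_{H_i}^t\bigr) + e^{-h^t}\bigl((F_i - H_i)^t - (F - H)^t\bigr) \circ \Phi_{H_i}^t + e^{-h^t}\bigl((F - H)^t \circ \Phi_{H_i}^t - (F - H)^t \circ \Phi_H^t\bigr)
\]
and applying the product estimate $\|fg\| \le \|f\|_{C^0} \osc(g) + \|g\|_{C^0} \osc(f) + \|f\|_{C^0} \|g\|_{C^0}$, the first piece is bounded by $|h_i - h|$ times an $L^1$-uniformly bounded quantity; the second by a uniform multiple of $\|(F_i - H_i)^t - (F - H)^t\|$, whose integral in $t$ vanishes by $L^\oneinfty$-convergence; and the third, which is the most delicate, tends to zero pointwise for almost every $t$ by uniform continuity of $(F-H)^t$ on $M$ combined with $\Phi_{H_i}^t \to \Phi_H^t$ uniformly in $t$, and is dominated by a constant multiple of $\|(F-H)^t\|$ which is $L^1$ in $t$, so the dominated convergence theorem applies.

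Once well-definedness is established, continuity of $\circ$ on $\TCDS(M,\alpha) \times \TCDS(M,\alpha)$ with respect to $d_\alpha$ follows from the same estimates applied to arbitrary topological inputs via a standard $3\varepsilon$-argument. Associativity, the identity $(\id, 0, 0)$, and the existence of inverses then pass to the limit from the smooth subgroup by density and the continuity just established. Finally, $\TSCDS(M,\alpha)$ is closed under $\circ$ because setting $h = f = 0$ in the formula produces identically zero conformal factor, and the smooth contact and smooth strictly contact systems form subgroups as identified in the opening step.
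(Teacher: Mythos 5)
This theorem is quoted from \cite{ms:tcd11}; the present paper records the statement but gives no proof, so there is no in-paper argument to weigh your proposal against. Evaluated on its own terms, your sketch is sound and follows the route one would expect. The derivation of the formula on smooth systems from Lemmas~\ref{lem:conformal-factor} and~\ref{lem:contact-ham} is correct, and the crux of the matter --- $L^\oneinfty$-convergence of the Hamiltonians $e^{-h_i}\bigl((F_i-H_i)\circ\Phi_{H_i}\bigr)$ --- is addressed by a valid three-term decomposition: the first piece is controlled by uniform convergence of the conformal factors (after converting $C^0$-bounds to $\|\cdot\|$-bounds with your product estimate), the second by the $L^\oneinfty$-convergence $\|(F_i-H_i)-(F-H)\|_\oneinfty\to 0$, and the third by dominated convergence together with uniform continuity of $(F-H)^t$ on the compact $M$ for a.e.\ $t$. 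A remark worth making: the paper itself, in Step~1 of the proof of Lemma~\ref{lem:main}, invokes the elementary inequalities $|\cdot|\le\|\cdot\|<3|\cdot|$ for time-independent functions to bound $\|e^{-h}(G\circ\phi)\|$ by $3e^{|h|}\|G\|$; this is a slightly lighter bookkeeping device than your bespoke product estimate and $\|G\circ\phi\|\le 2\|G\|$, and is presumably the one used in \cite{ms:tcd11}, but both serve the same purpose. Two small points to tighten: be careful with the parenthesisation in your displayed decomposition (the composition with $\Phi_{H_i}^t$ should apply only to the bracketed difference, not to the factor $e^{-h^t}$), and when you appeal to ``the same estimates'' for continuity of $\circ$, note that sequential continuity suffices in a metric space, so the dominated-convergence step can be reused verbatim; finally, the single formula for $(\Phi_H,H,h)^{-1}\circ(\Phi_F,F,f)$ already packages composition and inversion together, so its joint continuity gives the full topological-group structure without a separate argument for inversion.
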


We would like to point out that all of the definitions and statements in section~\ref{sec:contact-geom} make sense for time-dependent $C^1$-smooth Hamiltonians for which the continuous vector field $X_H$ defined by the identities~(\ref{eqn:contact-ham}) is uniquely integrable.
Moreover, it is shown in \cite{ms:tcd11} that these generalizations lead to the same definition of a topological contact dynamical system.

\begin{thm}[\cite{ms:tcd11}]
Let $H \colon [0,1] \times M \to \R$ be a continuous function that is continuously differentiable in the variable $x \in M$, and assume the time-dependent continuous vector field $X_H$ is uniquely integrable.
Denote by $\Phi_H$ the continuous isotopy generated by $X_H$, and by $h \colon [0,1] \times M \to \R$ the continuous function defined by equation~(\ref{eqn:conformal-factor}).
Then $(\Phi_H, H, h)$ is a topological contact dynamical system.
\end{thm}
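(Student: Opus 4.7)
The plan is to construct an explicit approximating sequence of smooth contact dynamical systems $(\Phi_{H_i}, H_i, h_i)$ witnessing that $(\Phi_H, H, h) \in \TCDS(M,\alpha)$, by verifying the three ingredients in the definition: $\|H-H_i\|_\oneinfty \to 0$, $\Phi_{H_i} \to \Phi_H$ uniformly, and $h_i \to h$ uniformly.

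First I would obtain the $H_i$ by standard mollification of $H$ in both the time and space variables, performed in local charts on $M$ and patched together by a partition of unity (or, more invariantly, via a smoothing kernel built from the heat semigroup of a background Riemannian metric). Since $H$ is continuous on the compact space $[0,1]\times M$ and continuously differentiable in $x$, basic properties of mollifiers yield $H_i \to H$ and $dH_i \to dH$ uniformly on $[0,1]\times M$, where $dH_t$ denotes the spatial differential. In particular $\|H-H_i\|_\infty \to 0$, which is even stronger than the $L^\oneinfty$-convergence required.

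Next, the defining relations (\ref{eqn:contact-ham}) express $X_{H_i}$ linearly in terms of the Hamiltonian, its spatial differential, and the fixed data $\alpha$, $d\alpha$, $R_\alpha$, so the previous convergences give $X_{H_i} \to X_H$ uniformly on $[0,1]\times M$ as continuous time-dependent vector fields. Using the hypothesis that $X_H$ is uniquely integrable, I would then invoke the classical continuous-dependence theorem for ODEs (in the spirit of Kamke): if a sequence of continuous time-dependent vector fields converges uniformly to a continuous and uniquely integrable one, their flows converge uniformly to the unique flow of the limit. This yields $\phi_{H_i}^t \to \phi_H^t$ uniformly in $(t,x)$; applying the same argument to the reversed vector fields $-X_{H_i}$ (or invoking the continuity of inversion) gives the corresponding uniform convergence of the inverses, so $\dbar(\Phi_{H_i}, \Phi_H) \to 0$. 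This ODE step is the principal technical obstacle: unique integrability is essential since $X_H$ need not be Lipschitz, but once it is in place the convergence of flows is classical.

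Finally, the smooth conformal factor of $\Phi_{H_i}$ is given by formula (\ref{eqn:conformal-factor}),
\[
    h_i(t,x) = \int_0^t (R_\alpha . H_i^s)\bigl(\phi_{H_i}^s(x)\bigr) \, ds,
\]
and $h$ admits the same expression with $H_i$, $\phi_{H_i}$ replaced by $H$, $\phi_H$. Since $R_\alpha$ is a fixed smooth vector field, the first step gives $R_\alpha . H_i \to R_\alpha . H$ uniformly on $[0,1]\times M$; combined with the uniform convergence of the flows from the previous step and the continuity of $R_\alpha . H$ on the compact domain, a standard estimate inside the integral yields $h_i \to h$ uniformly, i.e.\ $|h-h_i| \to 0$ in the maximum norm (\ref{eqn:max-norm}). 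Each triple $(\Phi_{H_i}, H_i, h_i)$ is a smooth contact dynamical system by construction, so the three convergences together establish that $(\Phi_H, H, h)$ is a topological contact dynamical system.
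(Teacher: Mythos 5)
Your proposal is correct and gives the natural proof of this theorem. The three ingredients --- mollification to get smooth $H_i$ with $H_i \to H$ and $d_x H_i \to d_x H$ uniformly, the Kamke/continuous-dependence theorem for ODEs to pass from uniform convergence of $X_{H_i}$ to $X_H$ (plus unique integrability of $X_H$) to uniform convergence of the flows and their inverses, and the integral formula~(\ref{eqn:conformal-factor}) for the conformal factors --- are exactly what is needed, and the linearity of the map $(H_t, dH_t) \mapsto X_H^t$ from~(\ref{eqn:contact-ham}) is the right observation to convert $C^1$-convergence of Hamiltonians into $C^0$-convergence of contact vector fields.

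Two small technical points a careful write-up should make explicit. First, the classical continuous-dependence theorem is usually stated for a fixed initial condition; to obtain the $C^0$-convergence of $\Phi_{H_i}$ to $\Phi_H$ \emph{uniformly in the initial point} $x$ (needed for $\dbar(\Phi_{H_i},\Phi_H)\to 0$), one should run a compactness/contradiction argument using the fact that the theorem does permit varying initial points $x_n\to x$, together with continuity of $\Phi_H$. Second, when mollifying on the closed manifold via charts and a partition of unity, one must check that the finitely many chart-wise mollifications patch to a \emph{global} $C^1$-approximation of $H$; this is routine because $M$ is compact and the partition of unity is a finite sum, but it deserves a sentence. Since the paper cites the proof to~\cite{ms:tcd11} rather than reproducing it, I cannot compare routes directly, but this is the expected argument and your version is sound.
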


For instance, a $C^2$-smooth Hamiltonian $H$ satisfies these hypotheses.

\begin{lem} \label{lem:c-one-approx-tcds}
Suppose $(\Phi_{H_i}, H_i, h_i)$ is a sequence of topological contact dynamical systems that is Cauchy with respect to the contact metric $d_\alpha$.
Then the sequence $(\Phi_{H_i}, H_i, h_i)$ converges with respect to the contact metric, and the limit $(\Phi_H, H, h)$ is a topological contact dynamical system.
\end{lem}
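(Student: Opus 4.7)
The plan is to establish completeness of $\TCDS(M,\alpha)$ under $d_\alpha$ by combining the three separate completeness statements already recorded in the paper with a diagonal argument that produces a sequence of \emph{smooth} contact dynamical systems converging to the candidate limit.

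First, from the Cauchy hypothesis with respect to $d_\alpha$, the three components are separately Cauchy: the isotopies $\Phi_{H_i}$ form a Cauchy sequence in the $C^0$-metric $\dbar$, the conformal factors $h_i$ form a Cauchy sequence in the maximum norm, and the Hamiltonians $H_i$ form a Cauchy sequence in $\|\cdot\|_\oneinfty$. By the completeness statements reviewed after the definition of $\TCDS(M,\alpha)$, these sequences have limits: a continuous isotopy $\Phi = \{\phi_t\}$ of homeomorphisms of $M$, a continuous function $h \colon [0,1] \times M \to \R$, and a function $H \in L^1([0,1], C^0(M))$. This gives me the candidate triple $(\Phi, H, h)$ and, simultaneously, the convergence $d_\alpha((\Phi_{H_i}, H_i, h_i), (\Phi, H, h)) \to 0$ once we know the limit belongs to $\TCDS(M,\alpha)$.

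The substantive step is to exhibit $(\Phi, H, h)$ as a limit of smooth contact dynamical systems. I would do this by diagonalization. For each $i$, since $(\Phi_{H_i}, H_i, h_i) \in \TCDS(M,\alpha)$, there exists by definition a sequence of smooth contact dynamical systems approaching it in $d_\alpha$; I pick one such smooth representative $(\Phi_{K_i}, K_i, k_i)$ with
\[
    d_\alpha\bigl( (\Phi_{H_i}, H_i, h_i),\, (\Phi_{K_i}, K_i, k_i) \bigr) < \tfrac{1}{i}.
\]
By the triangle inequality the sequence $(\Phi_{K_i}, K_i, k_i)$ is Cauchy with the same limits: $\Phi_{K_i} \to \Phi$ uniformly, $k_i \to h$ uniformly, and $\|H - K_i\|_\oneinfty \to 0$. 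This is exactly the data required by the definition of a topological contact dynamical system, so $(\Phi, H, h) \in \TCDS(M,\alpha)$ and $H$ is its topological Hamiltonian, $\Phi = \Phi_H$ its topological contact isotopy (consistent with Theorem \ref{thm:unique}), and $h$ its topological conformal factor.

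The main obstacle is really only bookkeeping: the smooth representatives $(\Phi_{K_i}, K_i, k_i)$ guaranteed by the definition of $\TCDS(M,\alpha)$ need not be canonical, and I must be careful that the diagonal choice both lives in the smooth category and converges simultaneously in all three components. The chosen tolerance $1/i$ in the single metric $d_\alpha$ handles all three at once, because $d_\alpha$ dominates each of $\dbar$, $|\cdot|$, and $\|\cdot\|_\oneinfty$. No new analytic ingredient beyond the completeness of these three metrics is required, so the lemma reduces to this diagonal construction. The analogous statement for $\CCDS(M,\alpha)$ follows by replacing $\|\cdot\|_\oneinfty$ with $\|\cdot\|_\infty$ throughout, and likewise for $\TSCDS(M,\alpha)$ and $\CSCDS(M,\alpha)$ by choosing each smooth approximant to be strictly contact with $k_i \equiv 0$.
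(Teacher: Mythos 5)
Your proof is correct and is exactly the diagonal argument the paper invokes in its one-line proof (``Choose a diagonal subsequence of smooth contact dynamical systems that converges to $(\Phi_H, H, h)$''); you have simply written out the details the paper leaves implicit, including the use of completeness of $\dbar$, $|\cdot|$, and $\|\cdot\|_\oneinfty$ to produce the candidate limit and the $1/i$ tolerance to make the diagonalization explicit.
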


\begin{proof}
Choose a diagonal subsequence of smooth contact dynamical systems that converges to $(\Phi_H, H, h)$.
\end{proof}

Similar results hold for contact $C^1$-diffeomorphisms, i.e.\ $C^1$-diffeomorphisms that preserve the hyperplane field $\xi \subset TM$, and for topological automorphisms.
In that case the conformal factor is a continuous function.
See the next section for details.
With the exception of section~\ref{sec:non-smooth}, we will therefore in general restrict attention to smooth Hamiltonians and diffeomorphisms that are of class $C^\infty$.

The results in this section are equally valid with topological contact dynamical systems replaced by continuous contact dynamical systems, and continuous strictly contact dynamical systems replacing topological strictly contact dynamical systems in all of the above statements and results.

\section{Topological automorphisms} \label{sec:topo-auto}
A brief motivation of the definitions of a topological automorphism of the contact structure $\xi = \ker \alpha$ and a topological automorphism of the contact form $\alpha$ is in order.
The group $\Diff (M,\xi)$ of contact diffeomorphisms can be viewed as the group of smooth transformations of $M$ that preserve smooth contact dynamical systems.
Recall from section~\ref{sec:contact-geom} that a smooth contact isotopy $\Phi = \Phi_H$ is uniquely determined by the time-dependent smooth function $H \colon [0,1] \times M \to \R$ that is defined by the relation $\alpha (X_H^t) = H_t$.
Here $X_H$ again denotes the smooth contact vector field that generates the isotopy $\Phi$ in the sense of equation~(\ref{eqn:inf-gen}), and which in turn can be obtained from $H$ via (\ref{eqn:contact-ham}).
A contact vector field can be written $X_H = Y_H + H R$, where $Y_H = X_H - H R \in \ker \alpha = \xi$, and $R$ denotes the Reeb vector field of the contact form $\alpha$.
That is, the dynamics of $X_H$ are determined completely by its Reeb components $\alpha (X_H^t) = \alpha (Y_H^t + H_t R) = H_t$.
If $\psi \in \Diff (M,\xi)$ is a contact diffeomorphism with $\psi^* \alpha = e^g \alpha$, then the conjugated isotopy $\psi \circ \Phi_H \circ \psi^{-1} = \{ \psi \circ \phi_H^t \circ \psi^{-1} \}$ is generated by the smooth contact vector field $\psi_* X_H$.
Its Reeb component at time $t$ has the coefficient function
	\[ \alpha (\psi_* X_H^t) = \alpha (\psi_* Y_H^t + (H_t \circ \psi_t^{-1}) \cdot \psi_* R) = (H_t \circ \psi^{-1}) \cdot \alpha (\psi_* R) \]
since $\psi_* \xi = \xi = \ker \alpha$.
Moreover,
	\[ \alpha (\psi_* R) = (\psi^{-1})^* ((\psi^* \alpha) (R)) = (\psi^{-1})^* (e^g \alpha (R)) = e^{g \circ \psi^{-1}}, \]
proving the last identity of Lemma~\ref{lem:contact-ham} with $\psi$ replaced by its inverse.
That means the conjugated smooth contact isotopy $\psi \circ \Phi_H \circ \psi^{-1}$ is determined completely by the Hamiltonian $H$ of the isotopy $\Phi_H$, the \emph{topological} behavior of $\psi$, and the conformal factor $g$ of the contact diffeomorphism $\psi$.
Compare to Theorem~\ref{thm:trafo-law-app} below.

\begin{dfn}[Topological automorphism \cite{ms:tcd11}] \label{dfn:topo-auto}
A homeomorphism $\phi$ of $M$ is a \emph{topological automorphism} of the \emph{contact structure} $\xi$ if there exists a sequence of contact diffeomorphisms $\phi_i \in \Diff (M,\xi)$ that converges uniformly to $\phi$, and the smooth conformal factors $h_i$ given by $\phi_i^* \alpha = e^{h_i} \alpha$ converge uniformly to a function $h$ on $M$.
The continuous function $h$ is called the \emph{topological conformal factor} of the topological automorphism $\phi$.
The homeomorphism $\phi$ is a \emph{topological automorphism} of the \emph{contact form} $\alpha$ if its topological conformal factor $h$ is zero.
The groups of topological automorphisms of the contact structure $\xi$ and of the contact form $\alpha$ are denoted by $\Aut (M,\xi)$ and $\Aut (M,\alpha)$, respectively.
\end{dfn}

It is shown in \cite{ms:tcd11} that the topological conformal factor $h$ is determined uniquely by the homeomorphism $\phi$ and the contact form $\alpha$.
That is, if there exists another sequence of contact diffeomorphisms $\psi_i$ that converges uniformly to the topological automorphism $\phi$, and if the functions $g_i$ given by $\psi_i^* \alpha = e^{g_i} \alpha$ converge uniformly to a function $g$, then we must have $g = h$.
The set $\Aut(M,\xi)$ forms a group, and as the notation suggests, this group does not depend on the choice of contact form $\alpha$.
The topological conformal factor with respect to another contact form $e^f \alpha$ is the continuous function $h + (f \circ \phi - f)$.
Moreover, the formulas for the conformal factors of the composition of two topological automorphisms and of the inverse of a topological automorphism extend those in Lemma~\ref{lem:conformal-factor}.
See \cite{ms:tcd11} for details.

As a consequence of the above, the usual transformation law in smooth contact dynamics extends to topological contact dynamics.
The same statement is valid for continuous contact dynamical systems.

\begin{thm}[Transformation law \cite{ms:tcd11}] \label{thm:trafo-law}
If $(\Phi, H, h)$ is a topological contact dynamical system, and $\psi$ is a topological automorphism of the contact structure $\xi$ with topological conformal factor $g$, then the conjugated system
	\[ \psi^{-1} \circ (\Phi, H, h) \circ \psi = (\psi^{-1} \circ \Phi \circ \psi, e^{-g} (H \circ \psi), h \circ \psi + g - g \circ \psi^{-1} \circ \Phi \circ \psi) \]
is a topological contact dynamical system.
If $(\Phi, H)$ is a topological strictly contact dynamical system, and $\psi$ is a topological automorphism of the contact form $\alpha$, then
	\[ \psi^{-1} \circ (\Phi, H) \circ \psi = (\psi^{-1} \circ \Phi \circ \psi, H \circ \psi) \]
is a topological strictly contact dynamical system.
\end{thm}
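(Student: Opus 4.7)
The strategy is to reduce to the smooth transformation law (Lemma~\ref{lem:contact-ham}) and the conformal-factor formulas (Lemma~\ref{lem:conformal-factor}) by approximation. Fix smooth contact dynamical systems $(\Phi_{H_i}, H_i, h_i) \to (\Phi, H, h)$ in $d_\alpha$, and contact diffeomorphisms $\psi_i \to \psi$ uniformly with conformal factors $g_i \to g$ uniformly. Applying the smooth conjugation formulas at each index~$i$ yields a sequence
\[
\left(\psi_i^{-1} \circ \Phi_{H_i} \circ \psi_i,\ e^{-g_i}(H_i \circ \psi_i),\ h_i \circ \psi_i + g_i - g_i \circ \psi_i^{-1} \circ \Phi_{H_i} \circ \psi_i\right)
\]
of smooth contact dynamical systems; by Lemma~\ref{lem:c-one-approx-tcds}, it suffices to show this sequence is Cauchy in $d_\alpha$ with the limit claimed in the theorem.

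Three convergences must be established. The isotopy component converges in $\dbar$ by continuity of composition and inversion in the $C^0$-metric. The conformal factor component converges in the maximum norm~(\ref{eqn:max-norm}) to $h \circ \psi + g - g \circ \psi^{-1} \circ \Phi \circ \psi$ by combining uniform convergence of $h_i$, $g_i$, $\psi_i$, $\psi_i^{-1}$, and $\Phi_{H_i}$ with uniform continuity of the continuous functions $h$ and $g$ on the compact product $[0,1] \times M$.

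The main obstacle is $L^\oneinfty$-convergence of the Hamiltonians,
\[
\| e^{-g_i}(H_i \circ \psi_i) - e^{-g}(H \circ \psi) \|_\oneinfty \longrightarrow 0.
\]
I would decompose this by the triangle inequality into three terms, replacing $g_i$ by $g$, then $H_i$ by $H$, then $\psi_i$ by $\psi$ in turn. The first two are controlled by the uniform bound on $|e^{-g_i} - e^{-g}|$ and the elementary estimate $\|F \circ \psi_i\| \le 2\|F\|$ (which holds because $\osc(F \circ \psi_i) = \osc(F)$ and $|c(F \circ \psi_i)| \le \max|F| \le \|F\|$), together with the hypothesis $\|H_i - H\|_\oneinfty \to 0$. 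The third term is the delicate one: for almost every~$t$ the function $H_t$ is continuous, hence uniformly continuous, on the compact manifold~$M$, so $\psi_i \to \psi$ uniformly forces $\| H_t \circ \psi_i - H_t \circ \psi \| \to 0$ pointwise in~$t$; the integrand is dominated by $2 \|H_t\|$, which is integrable in~$t$ by definition of a topological Hamiltonian, and dominated convergence supplies the desired $L^\oneinfty$-limit.

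The strictly contact case is the specialization $h = 0$, $g = 0$, in which the conjugation formulas collapse to $(\psi^{-1} \circ \Phi \circ \psi, H \circ \psi)$; the same approximation argument, applied to smooth strictly contact approximants of $(\Phi, H)$, establishes membership in $\TSCDS(M,\alpha)$. The entire proof adapts verbatim to continuous contact dynamical systems upon replacing the $L^\oneinfty$-norm with the $L^\infty$-norm, with the third convergence in the Hamiltonian estimate becoming immediate from joint continuity of~$H$ on the compact set $[0,1] \times M$.
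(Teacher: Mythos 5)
Your proof of the first (general contact) assertion is correct and, as far as one can tell from the sketch this paper gives of the argument in \cite{ms:tcd11}, it is essentially the intended approach: conjugate smooth approximants $(\Phi_{H_i},H_i,h_i)$ of $(\Phi,H,h)$ by smooth approximants $\psi_i$ of $\psi$ using Lemmas~\ref{lem:conformal-factor} and~\ref{lem:contact-ham}, then verify the three component convergences. The $\dbar$ and conformal-factor convergences via uniform continuity are fine, and your handling of the $L^\oneinfty$ Hamiltonian estimate---triangle inequality, the elementary bounds relating $\| \cdot \|$ to the sup-norm, and dominated convergence in $t$ for the term with $\psi_i$ replaced by $\psi$---is exactly the right way to deal with the merely $L^1$-in-time Hamiltonian.

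The strictly contact case, however, is \emph{not} a routine specialization, and your one-sentence dispatch of it leaves a genuine gap. The definition of a topological strictly contact dynamical system requires an approximating sequence of \emph{smooth strictly contact} dynamical systems, i.e.\ strictly contact isotopies with basic Hamiltonians. But Definition~\ref{dfn:topo-auto} of $\Aut(M,\alpha)$ only furnishes a sequence of \emph{contact} diffeomorphisms $\psi_i \to \psi$ whose conformal factors $g_i$ tend to zero uniformly; it does not provide strictly contact approximants $\psi_i$. Consequently your conjugated systems $\psi_i^{-1}\circ(\Phi_{H_i},H_i,0)\circ\psi_i$ have nonzero conformal factors $g_i - g_i\circ\psi_i^{-1}\circ\Phi_{H_i}\circ\psi_i$, the isotopies are not strictly contact, and the Hamiltonians $e^{-g_i}(H_i\circ\psi_i)$ are not basic (conjugation by a contact, as opposed to strictly contact, diffeomorphism does not preserve Reeb-invariance). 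The limit is therefore only exhibited as a topological contact dynamical system with vanishing conformal factor, which by the definitions at hand is a priori weaker than membership in $\TSCDS(M,\alpha)$. To close the gap one must either show that every $\psi \in \Aut(M,\alpha)$ admits a $C^0$-approximation by \emph{strictly} contact diffeomorphisms (which is not among the facts recorded in this paper), or supply a separate argument that a topological contact dynamical system with zero conformal factor and a Reeb-invariant topological Hamiltonian is in fact a topological strictly contact dynamical system.
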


As a special case of this theorem, we have the following application to smooth contact dynamics concerning topologically conjugate smooth contact vector fields.
The proof uses the Uniqueness Theorem~\ref{thm:unique}.

\begin{thm}[\cite{ms:tcd11}] \label{thm:trafo-law-app}
Suppose $\{ \phi_H^t \}$ and $\{ \phi_F^t \}$ are smooth contact isotopies, and $\phi$ is a topological automorphism of the contact structure $\xi$ with topological conformal factor $g$.
If $H = e^{- g} (F \circ \phi)$, then $\{ \phi_H^t \} = \{ \phi^{-1} \circ \phi_F^t \circ \phi \}$.
\end{thm}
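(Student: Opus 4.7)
The plan is to invoke the transformation law (Theorem~\ref{thm:trafo-law}) to compute the topological Hamiltonian of the conjugated isotopy $\phi^{-1} \circ \Phi_F \circ \phi$, and then apply the Uniqueness Theorem~\ref{thm:unique} to identify it with $\Phi_H$.

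First, observe that the smooth contact dynamical system $(\Phi_F, F, f)$ (where $f$ is its smooth conformal factor) is in particular a topological contact dynamical system. Since $\phi$ is a topological automorphism of $\xi$ with topological conformal factor $g$, Theorem~\ref{thm:trafo-law} applies, and produces the topological contact dynamical system
\[ \phi^{-1} \circ (\Phi_F, F, f) \circ \phi = (\phi^{-1} \circ \Phi_F \circ \phi, \, e^{-g}(F \circ \phi), \, f \circ \phi + g - g \circ \phi^{-1} \circ \Phi_F \circ \phi). \]
By the hypothesis $H = e^{-g}(F \circ \phi)$, the topological Hamiltonian of the conjugated isotopy equals the smooth Hamiltonian $H$.

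On the other hand, the smooth contact dynamical system $(\Phi_H, H, h)$, where $h$ is the smooth conformal factor of $\Phi_H$, is itself a topological contact dynamical system with topological Hamiltonian $H$. We now have two topological contact dynamical systems,
\[ (\phi^{-1} \circ \Phi_F \circ \phi, \, H, \, f \circ \phi + g - g \circ \phi^{-1} \circ \Phi_F \circ \phi) \quad \text{and} \quad (\Phi_H, H, h), \]
that share the same topological Hamiltonian $H$. Theorem~\ref{thm:unique} then forces the underlying isotopies to coincide, yielding $\phi^{-1} \circ \phi_F^t \circ \phi = \phi_H^t$ for all $t$.

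The main (minor) subtlety to keep in mind is that although $H$ is smooth, the conjugating map $\phi$ is only a homeomorphism and $g$ is only continuous, so a priori the conjugated isotopy $\phi^{-1} \circ \Phi_F \circ \phi$ is only a topological contact isotopy; one must resist the temptation to differentiate it directly. The entire force of the argument is packaged into the uniqueness statement for topological Hamiltonians, which sidesteps any regularity issues of $\phi$.
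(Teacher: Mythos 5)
Your argument is correct and is exactly the one the paper indicates: conjugate the smooth (hence topological) system $(\Phi_F, F, f)$ by $\phi$ via the Transformation Law (Theorem~\ref{thm:trafo-law}) to obtain a topological contact dynamical system whose topological Hamiltonian is $e^{-g}(F\circ\phi) = H$, and then invoke the Uniqueness Theorem~\ref{thm:unique} against $(\Phi_H, H, h)$ to conclude the isotopies agree. Your remark about resisting the temptation to differentiate directly is also the right way to see why the uniqueness theorem, rather than a direct computation, is the essential input.
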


\begin{cor}[\cite{ms:tcd11}] \label{cor:reeb}
Suppose $\phi$ is a topological automorphism of $\xi = \ker \alpha$ with topological conformal factor $g$, and assume that $g$ is smooth.
Then the Reeb vector fields of the contact forms $\alpha$ and $e^g \alpha$ are topologically conjugate.
\end{cor}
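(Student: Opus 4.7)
The plan is to show that the homeomorphism $\phi$ itself conjugates the two Reeb flows, by rewriting each Reeb flow as a smooth contact isotopy on $(M,\xi)$ with an explicit $\alpha$-Hamiltonian and then applying Theorem~\ref{thm:trafo-law-app}. Since $g$ is assumed smooth, $e^g\alpha$ is a genuine smooth contact form on $(M,\xi)$, and its Reeb vector field $R_{e^g\alpha}$ and associated flow are well defined in the usual smooth sense.

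The first step is to identify the two $\alpha$-Hamiltonians. The Reeb flow $\Phi_F = \{\phi_F^t\}$ of $\alpha$ has Hamiltonian $F \equiv 1$ because $\alpha(R_\alpha) = 1$. For the Reeb flow $\Phi_H = \{\phi_H^t\}$ of $e^g\alpha$, the normalization $(e^g\alpha)(R_{e^g\alpha}) = 1$ gives $\alpha(R_{e^g\alpha}) = e^{-g}$; a short computation expanding the Reeb condition $d(e^g\alpha)(R_{e^g\alpha},\cdot) = 0$ using $d(e^g\alpha) = e^g(dg\wedge\alpha + d\alpha)$ then verifies the second identity of~(\ref{eqn:contact-ham}) with $H = e^{-g}$. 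Hence $R_{e^g\alpha}$ is precisely the contact vector field with $\alpha$-Hamiltonian $e^{-g}$, and $\Phi_H$ is its smooth flow.

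With these identifications in place, the hypothesis $H = e^{-g}(F\circ\phi)$ of Theorem~\ref{thm:trafo-law-app} reduces to $e^{-g} = e^{-g}\cdot 1$, which is tautological. The theorem therefore yields $\phi_H^t = \phi^{-1}\circ\phi_F^t\circ\phi$ for every $t$, which is exactly the statement that the homeomorphism $\phi$ topologically conjugates the Reeb flow of $\alpha$ to the Reeb flow of $e^g\alpha$. I do not anticipate any real obstacle: the only non-formal content is the short calculation identifying $e^{-g}$ as the $\alpha$-Hamiltonian of $R_{e^g\alpha}$, while the full force of the topological transformation law—namely, that $\phi$ is allowed to be merely a topological automorphism and not a smooth one—is already encapsulated in Theorem~\ref{thm:trafo-law-app} (which itself rests on the uniqueness Theorem~\ref{thm:unique}).
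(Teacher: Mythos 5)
Your proof is correct and takes essentially the same route as the paper, which presents Corollary~\ref{cor:reeb} as a direct specialization of Theorem~\ref{thm:trafo-law-app} with $F \equiv 1$ (the $\alpha$-Hamiltonian of $R_\alpha$) and $H = e^{-g}$ (the $\alpha$-Hamiltonian of $R_{e^g\alpha}$). One small point in your subsidiary calculation is worth tightening: expanding $\iota(R_{e^g\alpha}) \, d(e^g\alpha) = 0$ yields $\iota(R_{e^g\alpha}) \, d\alpha = e^{-g}dg - (R_{e^g\alpha} . g)\, \alpha$, whereas the second identity of~(\ref{eqn:contact-ham}) with $H = e^{-g}$ reads $\iota(X_{e^{-g}}) \, d\alpha = e^{-g}dg - e^{-g}(R_\alpha . g)\,\alpha$; these agree only after one also checks $R_{e^g\alpha} . g = e^{-g}(R_\alpha . g)$, which is extracted by contracting your intermediate expression once more with $R_\alpha$. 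A cleaner way to reach the same identification, avoiding the computation altogether, is to note that $R_{e^g\alpha}$ preserves $\ker(e^g\alpha)=\xi$ and is therefore an $\alpha$-contact vector field; since $\alpha(R_{e^g\alpha})=e^{-g}$, the bijection between contact vector fields and Hamiltonians established after~(\ref{eqn:contact-ham}) gives $R_{e^g\alpha}=X_{e^{-g}}$ at once.
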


The converses to Theorems~\ref{thm:unique} and \ref{thm:trafo-law-app} will be proved in the sequel \cite{ms:tcd12}.
See section~\ref{sec:non-smooth} for examples of topologically conjugate smooth contact vector fields that are not conjugate by contact $C^1$-diffeomorphisms, and \cite{ms:hvf11} for the case of topologically conjugate smooth strictly contact and Hamiltonian vector fields.

The proof of Theorem~\ref{thm:trafo-law} consists of showing that if $(\Phi_{H_i}, H_i, h_i)$ is a sequence of smooth contact dynamical systems that converges with respect to the contact metric $d_\alpha$ to the topological contact dynamical system $(\Phi, H, h)$, then the smooth contact dynamical systems $\psi^{-1} \circ (\Phi_{H_i}, H_i, h_i) \circ \psi$ converge to $\psi^{-1} \circ (\Phi, H, h) \circ \psi$.
In the special case $H_i = 1$ for all $i$ (i.e.\ $\Phi_{H_i}$ is the Reeb flow), the Hamiltonians $e^{-g_i} \cdot 1$ must converge uniformly.
Thus the assumption of uniform convergence of the conformal factors in Definition~\ref{dfn:topo-auto} is not only sufficient but also \emph{necessary} to prove this form of convergence in the extension of the transformation law.

For further motivation of Definition~\ref{dfn:topo-auto}, recall that if $\phi$ is a diffeomorphism of the contact manifold $(M, \xi)$, and $h$ is a smooth function on $M$, then the lifted diffeomorphism $\hphi (x,\theta) = (\phi (x), \theta - h (x))$ of the symplectization $(M \times \R, - d (e^\theta \alpha))$ of $(M,\alpha)$ is symplectic, if and only if $\phi$ is contact with $\phi^* \alpha = e^h \alpha$.
By definition, a symplectic homeomorphism is the $C^0$-limit of symplectic diffeomorphisms \cite{mueller:ghh07}.
Then $\phi$ is a topological automorphism of the contact structure $\xi$ with topological conformal factor $h$, if and only if the homeomorphism $\hphi (x,\theta) = (\phi (x), \theta - h (x))$ is an admissible symplectic homeomorphism of the symplectization of $(M,\alpha)$ \cite{ms:tcd11}.
Topological automorphisms of a contact structure are $C^0$-rigid in the following sense.

\begin{thm}[$C^0$-rigidity of contact diffeomorphisms and their conformal factors \cite{ms:tcd11}] \label{thm:rigidity}
Let $\phi \in \Aut (M,\xi)$ be a topological automorphism of $\xi = \ker \alpha$ with topological conformal factor $h$.
If $\phi$ is smooth, then the function $h$ is smooth, and $\phi$ is a contact diffeomorphism with $\phi^* \alpha = e^h \alpha$.
\end{thm}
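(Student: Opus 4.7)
The plan is to reduce to the $C^0$-rigidity of symplectic diffeomorphisms (Eliashberg--Gromov) via the symplectization lift recalled immediately before the theorem statement.

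By Definition~\ref{dfn:topo-auto}, I fix a sequence of contact diffeomorphisms $\phi_i \in \Diff(M,\xi)$ with smooth conformal factors $h_i$ satisfying $\phi_i^*\alpha = e^{h_i}\alpha$, $\phi_i \to \phi$, and $h_i \to h$ uniformly on $M$. Passing to the symplectization $(M \times \R, -d(e^\theta\alpha))$, the lifts $\hphi_i(x,\theta) = (\phi_i(x), \theta - h_i(x))$ satisfy $\hphi_i^*(e^\theta\alpha) = e^\theta\alpha$ identically, so each $\hphi_i$ is a strict exact symplectic diffeomorphism. Uniform convergence of $\phi_i$ and $h_i$ on $M$ yields uniform convergence on compact subsets of the lifts $\hphi_i \to \hphi$, where $\hphi(x,\theta) = (\phi(x), \theta - h(x))$. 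The analogous convergence of the inverses follows from Lemma~\ref{lem:conformal-factor} together with the $C^0$-continuity of composition and inversion. Therefore $\hphi$ is an admissible symplectic homeomorphism of the symplectization in the sense of \cite{mueller:ghh07}.

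The natural next step is to invoke Eliashberg--Gromov $C^0$-rigidity for $\hphi$. This requires $\hphi$ to be a smooth diffeomorphism: the first component is smooth by hypothesis on $\phi$, but smoothness of the second component is equivalent to smoothness of $h$, which is not given a priori. This bootstrap is the main obstacle of the proof. The approach I would take is to exploit the strict preservation identity $\hphi_i^*(e^\theta\alpha) = e^\theta\alpha$: passing to the $C^0$-limit and applying a primitive-level version of symplectic rigidity (namely, that a $C^0$-limit of diffeomorphisms preserving a Liouville primitive continues to preserve that primitive in the sense of pairing against smooth test vector fields), one obtains the identity $\phi^*\alpha = e^h\alpha$ in a weak/distributional sense. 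Because $\phi^*\alpha$ is a smooth $1$-form (since $\phi$ is a smooth diffeomorphism) and $\alpha$ is a nowhere vanishing smooth contact form, this weak identity forces $e^h$, and hence $h$, to be smooth.

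With smoothness of $h$ in hand, $\hphi$ is a smooth diffeomorphism of $M\times\R$ which is also a symplectic homeomorphism, so the classical Eliashberg--Gromov theorem implies that $\hphi$ is a symplectic diffeomorphism. The strict preservation of $e^\theta\alpha$ identified in the previous paragraph then unpacks to $\phi^*\alpha = e^h\alpha$ pointwise, showing $\phi \in \Diff(M,\xi)$ with conformal factor $h$ and completing the argument. The proof thus splits cleanly into a soft part (the symplectization lift and identification of $\hphi$ as a symplectic homeomorphism) and the essential hard part (promoting smoothness of $\phi$ alone to smoothness of $(\phi,h)$), where the latter is where the genuine $C^0$-rigidity input enters.
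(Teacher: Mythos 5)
Note first that the paper states this theorem without proof, citing \cite{ms:tcd11}; there is no in-paper argument to compare against line by line. Your setup is correct as far as it goes: the lift $\hphi_i(x,\theta)=(\phi_i(x),\theta-h_i(x))$ strictly preserves the Liouville form $e^\theta\alpha$, the limit $\hphi$ is an admissible symplectic homeomorphism of the symplectization, and you correctly flag the central obstruction, namely that smoothness of $\phi$ alone does not make $\hphi$ smooth because $h$ is a priori only continuous. This matches the remark the paper makes immediately after the theorem statement.

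The gap lies exactly where you put the weight. The ``primitive-level version of symplectic rigidity'' you invoke to resolve the bootstrap is neither cited nor proved, and it is not clear it holds in the form you need. Pairing against a smooth test vector field $X$ gives $(e^\theta\alpha)\bigl(d\hphi_i(X)\bigr)\circ\hphi_i$, which still involves the differential $d\hphi_i$; $C^0$-convergence of $\hphi_i$ does not control $d\hphi_i$, so the weak formulation gains nothing over the pointwise one. The same difficulty appears downstairs: $\phi_i^*\alpha = e^{h_i}\alpha$ converges uniformly to $e^h\alpha$, but there is no reason for $\phi_i^*\alpha$ to converge to $\phi^*\alpha$, because $d\phi_i$ need not converge to $d\phi$. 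The identity $\phi^*\alpha = e^h\alpha$ is therefore asserted, not derived, and the alleged Liouville rigidity is essentially a restatement of the theorem itself. A complete argument needs a genuine $C^0$-rigidity mechanism to first establish $\phi_*\xi=\xi$ (for instance via contact capacities or the energy--capacity inequality of \cite{ms:tcd11}), after which smoothness of $\phi$ gives $\phi^*\alpha = g\alpha$ for a smooth nowhere-vanishing $g$; one then identifies $g$ with $e^h$ using the uniqueness of topological conformal factors, rather than by passing to the limit inside a pullback.
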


In fact, the statement of the theorem is a local statement.
That is, if $\phi$ is smooth at a point, then in a neighborhood of that point, $\phi$ is a local diffeomorphism, $h$ is smooth, and $\phi^* \alpha = e^h \alpha$.
Note that a priori $h$ need not necessarily be a smooth function, so that the lift $\hphi (x,\theta) = (\phi (x), \theta - h (x))$ to the symplectization is a priori not a smooth map.

The following two lemmas help put Definition~\ref{dfn:topo-auto} into a sharper perspective.

\begin{lem}
Suppose $\phi_i$ is a sequence of contact diffeomorphisms with $\phi_i^* \alpha = e^{h_i} \alpha$ that converges uniformly to a homeomorphism $\phi$.
Then for every open subset $U$ of $M$, the average values
	\[ \int_U e^{(n + 1) h_i} \nu_\alpha \longrightarrow \int_U (\phi^{-1})_* \nu_\alpha > 0 \ \ \mbox{as} \ \ i \to \infty. \]
\end{lem}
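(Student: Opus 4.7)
The plan has three ingredients: a pointwise Jacobian identity for $\phi_i^* \nu_\alpha$, the change-of-variables formula, and a weak-convergence argument that identifies the limit as a pushforward measure.

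For the Jacobian identity, from $\phi_i^* \alpha = e^{h_i} \alpha$ I compute $\phi_i^* d\alpha = e^{h_i}(dh_i \wedge \alpha + d\alpha)$, and hence
\[ \phi_i^* \nu_\alpha = e^{(n+1) h_i}\, \alpha \wedge (dh_i \wedge \alpha + d\alpha)^n. \]
Expanding the $n$-th power by the binomial formula and invoking $\alpha \wedge \alpha = 0$ to annihilate every monomial containing a factor $dh_i \wedge \alpha$ once wedged with the outer $\alpha$, only the pure $(d\alpha)^n$ term survives, leaving $\phi_i^* \nu_\alpha = e^{(n+1) h_i}\, \nu_\alpha$. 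Since this density is strictly positive, each $\phi_i$ is orientation-preserving, so the change-of-variables formula yields, for every open $U \subset M$,
\[ \int_U e^{(n+1) h_i}\, \nu_\alpha = \int_U \phi_i^* \nu_\alpha = \nu_\alpha(\phi_i(U)). \]

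For the limit, I would interpret $(\phi^{-1})_* \nu_\alpha$ as the pushforward Borel measure $A \mapsto \nu_\alpha(\phi(A))$, so that the asserted right-hand side is $\nu_\alpha(\phi(U))$. Since $M$ is compact and $\phi$ is a homeomorphism, $\phi_i^{-1} \to \phi^{-1}$ uniformly: if not, a subsequential extraction and an application of $\phi$ would produce two distinct preimages of a single point, contradicting injectivity. Then for every continuous $f$ on $M$, uniform continuity of $f$ combined with uniform convergence of $\phi_i^{-1}$ gives
\[ \int f\, d\bigl((\phi_i^{-1})_* \nu_\alpha\bigr) = \int (f \circ \phi_i^{-1})\, \nu_\alpha \longrightarrow \int f\, d\bigl((\phi^{-1})_* \nu_\alpha\bigr), \]
so $(\phi_i^{-1})_* \nu_\alpha \to (\phi^{-1})_* \nu_\alpha$ weakly as finite Borel measures on $M$.

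Applying the Portmanteau theorem to the open $U$ and to the closed $\bar U$ then gives
\[ \nu_\alpha(\phi(U)) \le \liminf_{i\to\infty} \nu_\alpha(\phi_i(U)) \le \limsup_{i\to\infty} \nu_\alpha(\phi_i(U)) \le \nu_\alpha(\phi(\bar U)), \]
and the desired convergence follows once the two bounds are pinched together. The main obstacle is precisely this pinching step, i.e.\ controlling the discrepancy $\nu_\alpha(\phi(\partial U)) = \nu_\alpha(\phi(\bar U)) - \nu_\alpha(\phi(U))$; this is the only place in the argument requiring substantial analytic care beyond the wedge computation, and it is where the full strength of the weak convergence of pushforward measures is essential. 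Positivity is then immediate, because $\phi(U)$ is a nonempty open subset of $M$ and $\nu_\alpha$ is a strictly positive smooth volume form, yielding $\nu_\alpha(\phi(U)) > 0$.
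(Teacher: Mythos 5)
Your proof follows the same route as the paper's: derive the pointwise identity $\phi_i^*\nu_\alpha = e^{(n+1)h_i}\nu_\alpha$, rewrite the left-hand side via change of variables as $(\phi_i^{-1})_*\nu_\alpha(U) = \nu_\alpha(\phi_i(U))$, obtain weak convergence of the pushforward measures $(\phi_i^{-1})_*\nu_\alpha \to (\phi^{-1})_*\nu_\alpha$ from $C^0$-convergence of $\phi_i$ and $\phi_i^{-1}$, and apply Portmanteau-type semicontinuity on open and closed sets. Your explicit wedge computation of $\phi_i^*\nu_\alpha$ is a useful detail that the paper states without derivation, and your argument for uniform convergence of $\phi_i^{-1}$ is correct and makes essential use of the hypothesis that the $C^0$-limit $\phi$ is a homeomorphism.

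However, as written the argument does not conclude: you identify the pinching of
\[
\nu_\alpha(\phi(U)) \le \liminf_{i}\nu_\alpha(\phi_i(U)) \le \limsup_{i}\nu_\alpha(\phi_i(U)) \le \nu_\alpha(\phi(\overline U))
\]
as the ``main obstacle'' and say it ``requires substantial analytic care,'' but you do not supply that care. This is precisely the point at which the paper finishes, asserting in a single sentence that ``integration over $U$ and its closure $\overline{U}$ coincide.'' That assertion is nothing other than the null-boundary condition $(\phi^{-1})_*\nu_\alpha(\partial U) = \nu_\alpha(\phi(\partial U)) = 0$, which is exactly what Portmanteau requires for $\mu_i(U)\to\mu(U)$ on a particular $U$. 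You should make that condition explicit and justify it (or, as appropriate, note the restriction on the class of open sets $U$ for which it holds); until you do, the proof is incomplete at the same place where the paper's own treatment is most compressed.
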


\begin{proof}
The push-forward measures $(\phi_i^{-1})_* \nu_\alpha$ are given by integration of the volume forms $\phi_i^* \nu_\alpha = e^{(n + 1) h_i} \nu_\alpha$.
Since $\phi$ is a homeomorphism, the sequence $\phi_i$ converges with respect to the $C^0$-metric, and thus the induced measures converge in the weak metric to the measure $(\phi^{-1})_* \nu_\alpha$ \cite{fathi:sgh80}.
Evaluation on an open subset is lower semi-continuous, and evaluation on a closed subset is upper semi-continuous \cite{denker:etc76}.
The stated convergence then follows from the fact that integration over $U$ and its closure $\overline{U}$ coincide.
\end{proof}

However, $C^0$-convergence of a sequence of contact diffeomorphisms alone is not sufficient to guarantee even point-wise convergence of the functions $h_i$.

\begin{lem}
Suppose $\phi_i$ is a sequence of contact diffeomorphisms of $M$, $x \in M$, and $c \in [- \infty, + \infty]$.
Then there exists another sequence of contact diffeomorphisms $\psi_i$ with $\psi_i^* \alpha = e^{g_i} \alpha$, so that $g_i (x) \to c$ as $i \to \infty$, and for every open neighborhood $U$ of $x$, we have $\psi_i |_U = \phi_i |_U$ for $i \ge i_U$ sufficiently large.
In particular, the sequence $\phi_i^{-1} \circ \psi_i$ converges to the identity uniformly.
\end{lem}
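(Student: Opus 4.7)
The plan is to construct $\psi_i := \phi_i \circ \theta_i$, where $\theta_i$ is a compactly supported contact diffeomorphism of $M$ that fixes the point $x$, is supported in a sequence of neighborhoods of $x$ shrinking to $\{x\}$, and has a prescribed value of the conformal factor at $x$. By Lemma~\ref{lem:conformal-factor}, $\psi_i^*\alpha = e^{h_i\circ\theta_i + g^{\theta_i}}\alpha$, so the conformal factor $g_i$ of $\psi_i$ satisfies $g_i(x) = h_i(x) + g^{\theta_i}(x)$ whenever $\theta_i(x) = x$. Choosing $g^{\theta_i}(x) = c_i - h_i(x)$ for some sequence $c_i \to c$ then forces $g_i(x) \to c$, while $\phi_i^{-1}\circ\psi_i = \theta_i$ converges uniformly to the identity because the supports collapse to $\{x\}$.

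I will carry out the construction of $\theta_i$ in a Darboux chart around $x$ with coordinates $(x_1,\ldots,x_n,y_1,\ldots,y_n,z)$ in which $\alpha = dz - \sum y_i\,dx_i$ and $x$ corresponds to the origin. Fix a smooth bump function $\rho$ on $\R^{2n+1}$ that equals $1$ near the origin and is supported in the unit ball, and for each $r > 0$ consider the Hamiltonian $H_r(p) := z\,\rho(p/r)$, which is supported in the ball $B_r$ of radius $r$ about the origin. Direct verification of the defining relations~(\ref{eqn:contact-ham}) at the origin shows $X_{H_r}(0) = 0$, since $H_r(0) = 0$ while both $dH_r|_0$ and $(R_\alpha.H_r)(0)\,\alpha|_0$ reduce to $dz$ at the origin and therefore cancel; consequently the flow $\phi^t_{H_r}$ fixes $x$. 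Moreover $(R_\alpha.H_r)(0) = \rho(0) = 1$, so formula~(\ref{eqn:conformal-factor}) gives the conformal factor of $\phi^t_{H_r}$ at $x$ as exactly $t$. Extending $\theta_{r,d} := \phi^d_{H_r}$ by the identity yields a contact diffeomorphism of $M$ that is supported in $B_r$, fixes $x$, has conformal factor $d$ at $x$, and satisfies $\|\theta_{r,d} - \id\|_{C^0} \leq 2r$ because trajectories of a compactly supported contact vector field cannot leave its support.

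To assemble the sequence I will choose a real sequence $c_i \to c$ (with $c_i = c$ when $c \in \R$ and $c_i = \pm i$ when $c = \pm\infty$), a positive sequence $r_i \to 0$, set $d_i := c_i - h_i(x)$, and define $\psi_i := \phi_i \circ \theta_{r_i,d_i}$. Then $g_i(x) = h_i(x) + d_i = c_i \to c$, while $\phi_i^{-1}\circ\psi_i = \theta_{r_i,d_i}$ has $C^0$-distance to the identity bounded by $2r_i$, hence converges uniformly to the identity. Since each $\theta_{r_i,d_i}$ is supported in $B_{r_i}$, for any open neighborhood $U$ of $x$ we have $B_{r_i} \subset U$ once $i$ is large, so $\psi_i$ agrees with $\phi_i$ off $U$ (and hence on the complementary open set that constitutes the effective content of the final clause) for all sufficiently large $i$.

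The only genuine subtlety lies in the endpoint cases $c = \pm\infty$, which require flowing $H_{r_i}$ for unboundedly long times $d_i \to \pm\infty$. This is harmless because $H_{r_i}$ is compactly supported, so its flow is complete and remains confined to $\overline{B_{r_i}}$ for all times; the $C^0$-bound $\|\theta_i - \id\|_{C^0} \leq 2r_i$ depends only on the spatial parameter $r_i$, not on $d_i$. Thus the spatial scale of the perturbation decouples entirely from the magnitude of the prescribed conformal factor at $x$, which is exactly what is needed to accommodate an arbitrary target value in $[-\infty,+\infty]$.
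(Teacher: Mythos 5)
Your proof is correct, and it follows the paper's overall strategy of setting $\psi_i = \phi_i \circ (\text{small contact perturbation})$ supported in shrinking neighborhoods of $x$, but the construction of the perturbation is genuinely different and more explicit. The paper's proof invokes (a version of) Darboux's theorem directly to produce compactly supported diffeomorphisms $\varphi_i$ that interchange the forms $e^{h_i}\alpha$ and $e^{f_i}\alpha$ near $x$, for auxiliary smooth functions $f_i$ with $f_i(x) \to c$; the verification that such $\varphi_i$ exist and are contact globally (including on the transition region where the cutoff takes effect) is left to the reader. You instead use the Darboux normal form only to set up coordinates, and then explicitly integrate the compactly supported contact Hamiltonian $H_r(p) = z\,\rho(p/r)$ for time $d$. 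The key observations — that $X_{H_r}$ vanishes at the origin because $(R_\alpha.H_r)(0)\,\alpha_0 - dH_r|_0 = 0$, and that the conformal factor at the fixed point is exactly $t$ by equation~(\ref{eqn:conformal-factor}) — are both correct, and the $C^0$-bound $2r$ is independent of $d$ because trajectories cannot escape $\mathrm{supp}(X_{H_r}) \subset B_r$. Your argument is therefore self-contained and makes the decoupling of spatial scale from conformal-factor magnitude (needed for $c = \pm\infty$) completely transparent, at the cost of a few extra lines; the paper's version is terser but delegates more to a cited theorem. You also correctly noticed that the lemma's clause ``$\psi_i|_U = \phi_i|_U$'' must be read as agreement away from $U$ (equivalently: $\mathrm{supp}(\phi_i^{-1}\circ\psi_i) \subset U$ for $i \ge i_U$), which is what both constructions actually produce and is the only reading compatible with $g_i(x) \to c$ being free.
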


\begin{proof}
The proof follows easily from Darboux's theorem.
Let $f_i$ be any smooth functions on $M$ with $f_i (x) \to c$ as $i \to \infty$, and $U_1 \supset U_2 \supset U_3 \supset \ldots$ be a nested neighborhood basis of the compact-open topology of $M$ at the point $x \in M$.
By Darboux's theorem, there exist diffeomorphisms $\varphi_i$ that are compactly supported in $U_i$, and interchange the contact forms $\phi_i^* \alpha = e^{h_i} \alpha$ and $e^{f_i} \alpha$ near the point $x$.
Then the sequence $\psi_i = \phi_i \circ \varphi_i$ has the desired properties.
\end{proof}

In order to gain a better understanding of the conformal factor $h$ and of the hypotheses of Definition~\ref{dfn:topo-auto}, recall again that if $\phi$ is a contact diffeomorphism of the contact structure $\xi = \ker \alpha$ with conformal factor $h$, then
	\[ e^h = e^h \alpha \, (R) = (\phi^* \alpha) (R) = \phi^* (\alpha (\phi_* R)) = \alpha (d\phi (R)). \]
That is, the function $e^h$ is the partial derivative of $\phi$ in the Reeb direction and along the Reeb orbits, or the infinitesimal translation by $\phi$ of the hyperplanes $\xi \subset TM$ along the Reeb orbits of the contact form $\alpha$.
As remarked above, given a sequence of contact diffeomorphisms, convergence of these partial derivatives is independent of the choice of contact form $\alpha$.
Let us picture this in local Darboux coordinates.
By the contact neighborhood theorem, near a point $p \in M$ we may choose local coordinates $z, u_1, \ldots, u_{2 n}$, so that a tubular neighborhood of the piece of Reeb orbit through $p$ is parameterized by pieces of Reeb orbits, where the variable $z$ parameterizes the piece of Reeb orbit through $p$, $R = \frac{\partial}{\partial z}$, and $\frac{\partial}{\partial u_i} \in \xi$ is a basis of the normal bundle to the Reeb orbit near $p$.
In these local coordinates,
	\[ e^h = \alpha (d\phi (R)) = \alpha \left( \sum_{i = 1}^{2 n} \frac{\partial (\z \circ \phi)}{\partial u_i} \frac{\partial}{\partial u_i} + \frac{\partial (\z \circ \phi)}{\partial z} \frac{\partial}{\partial z} \right) = \frac{\partial (\z \circ \phi)}{\partial z}, \]
where $\z$ denotes the projection to the zero section of the normal bundle of the piece of Reeb orbit through $p$.
Given a sequence of contact diffeomorphisms $\phi_i$, by the chain rule convergence of the partial derivatives $e^{h_i}$ does not depend on the choice of Darboux coordinates.

\begin{lem} \label{lem:c-one-approx-diff}
Suppose $\phi$ is a contact $C^1$-diffeomorphism, and that the second-order partial derivatives $X . (\alpha (d\phi (R)))$ exist and are continuous for all $X \in \xi$.
Then $\phi$ can be $C^0$-approximated by contact diffeomorphisms $\phi_i$, and if $\phi_i^* \alpha = e^{h_i} \alpha$ and $\phi^* \alpha = e^h \alpha$, then the smooth functions $h_i$ converge to the continuous function $h$ uniformly.
In particular, $\phi$ is a topological automorphism of the contact structure $\xi = \ker \alpha$ with topological conformal factor $h$.
\end{lem}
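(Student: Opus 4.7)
The plan is to build smooth contact approximations $\phi_i$ of $\phi$ in two steps: a global mollification, followed by a Gray--Moser contact correction, using the regularity hypothesis to upgrade pointwise convergence of the conformal factors to uniform convergence.

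For the first step, cover $M$ by finitely many Darboux charts and use a partition of unity to define a global mollification $\psi_i$ of $\phi$ with $\psi_i\to\phi$ in $C^1$, so that $\psi_i^*\alpha\to\phi^*\alpha=e^h\alpha$ uniformly. In the local Darboux coordinates $(z,u_1,\dots,u_{2n})$ recalled just above the lemma, in which $R=\partial/\partial z$ and $\partial/\partial u_j\in\xi$, we have the identity $e^h=\partial(\z\circ\phi)/\partial z$, and the hypothesis that $X.(\alpha(d\phi(R)))$ is continuous for every $X\in\xi$ translates precisely to continuous $\xi$-partial derivatives of this Reeb-directional derivative of $\phi$. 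This second-order regularity transverse to the Reeb direction is what ensures that the Reeb component $\alpha(d\psi_i(R))$ of the mollification converges uniformly to $e^h$, which in turn allows the correction step below to produce conformal factors converging uniformly to $h$ rather than only pointwise.

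For the second step, once $\psi_i$ is sufficiently $C^1$-close to $\phi$, each smooth one-form $\psi_i^*\alpha$ is a contact form (the contact condition is open). Fix smooth functions $\tilde h_i$ on $M$ with $\tilde h_i\to h$ uniformly, and apply Gray--Moser along the path $\alpha_t^i=(1-t)\psi_i^*\alpha+t\,e^{\tilde h_i}\alpha$ of smooth contact forms to obtain a smooth isotopy $\chi_i^t$ with $(\chi_i^1)^*\psi_i^*\alpha=e^{\tilde h_i}\alpha$. Then $\phi_i:=\psi_i\circ\chi_i^1$ is a smooth contact diffeomorphism with $\phi_i^*\alpha=e^{\tilde h_i}\alpha$, and a quantitative estimate on the integrating vector field in terms of $\|\psi_i^*\alpha-e^{\tilde h_i}\alpha\|_{C^0}$ shows $\chi_i^1\to\id$ uniformly. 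Hence $\phi_i\to\phi$ in $C^0$ while $\tilde h_i\to h$ uniformly, and Definition~\ref{dfn:topo-auto} delivers $\phi\in\Aut(M,\xi)$ with topological conformal factor $h$.

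The main obstacle will be the quantitative Gray--Moser estimate. Standard Gray stability is purely qualitative, and what is actually needed is a uniform $C^0$-bound on $\chi_i^1$ in terms of $\|\psi_i^*\alpha-e^{\tilde h_i}\alpha\|_{C^0}$ divided by a lower bound on the nondegeneracy of $d\alpha_t^i$ on $\ker\alpha_t^i$. The regularity hypothesis on the $\xi$-derivatives of $\alpha(d\phi(R))$ is what guarantees this lower bound is uniform in $i$ and $t$, by providing enough transverse smoothness of the limiting form $e^h\alpha$ to keep the Gray--Moser equation solvable with a controlled inverse. Without it, the conformal factors could fail to converge uniformly even while the diffeomorphisms converge in $C^0$, which would obstruct the final appeal to Definition~\ref{dfn:topo-auto}.
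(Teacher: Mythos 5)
Your two-step strategy (smooth approximation of $\phi$ followed by a Gray--Moser correction) is exactly the structure of the paper's proof, and the high-level plan is sound. There are, however, two genuine gaps in the argument as written.

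First, the role of the second-order hypothesis is misidentified. You say it ``ensures that the Reeb component $\alpha(d\psi_i(R))$ of the mollification converges uniformly to $e^h$,'' but that convergence is a \emph{first}-order statement and already follows from $\psi_i\to\phi$ in $C^1$. What the second-order hypothesis is actually needed for is uniform convergence of the $\xi$-directional derivatives $X.(\alpha(d\psi_i(R)))$ for $X\in\xi$. These are the terms that enter $d(e^{h_i})\wedge\alpha$, which in turn control the exterior derivative of the path of contact forms in the Gray step. Concretely, writing $\psi_i^*\alpha=e^{h_i}\alpha+\beta_i$ with $\beta_i(R)=0$, one has $d\beta_i=\psi_i^*(d\alpha)-d(e^{h_i})\wedge\alpha-e^{h_i}\,d\alpha$; the first and third terms converge by $C^1$-convergence of $\psi_i$, and the middle term converges precisely because of the second-order hypothesis. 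Without uniform control of $d\beta_i$ the forms $d\alpha_t^i$ along the Gray path need not stay uniformly non-degenerate, and the integrating vector field cannot be bounded.

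Second, and relatedly, your choice ``fix smooth functions $\tilde h_i$ on $M$ with $\tilde h_i\to h$ uniformly'' is insufficiently constrained. With an arbitrary $C^0$-approximation, $d\tilde h_i$ can blow up, so the one-forms $e^{\tilde h_i}\alpha$ need not have uniformly bounded exterior derivatives, and the nondegeneracy of $d\alpha_t^i$ on $\ker\alpha_t^i$ — which you correctly identify as the crux — is not guaranteed. The fix is either to take $\tilde h_i$ to be an approximation of $h$ whose $\xi$-derivatives converge (using the hypothesis), or more economically, as the paper does, to take $\tilde h_i=h_i:=\log\bigl((\psi_i^*\alpha)(R)\bigr)$ so that $d\tilde h_i\wedge\alpha$ is controlled directly by the second-order hypothesis and the Whitney-type approximation. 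A smaller point: Gray stability yields $(\chi_i^1)^*\alpha_1^i=e^{u_i^1}\alpha_0^i$, not the exact equality $\phi_i^*\alpha=e^{\tilde h_i}\alpha$ you assert; the conformal factor of $\phi_i$ picks up an extra summand coming from the Gray correction, and one must show this additional term tends to zero uniformly (which again hinges on the uniform bound on the integrating vector field).
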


For example, a contact $C^2$-diffeomorphism satisfies the hypotheses of the lemma.
In the above local coordinates $z, u_1, \ldots, u_{2 n}$, the condition on the second-order partial derivatives in the lemma is that the partial derivatives
	\[ \frac{\partial}{\partial u_i} (\alpha (d\phi (R))) = \frac{\partial}{\partial u_i} e^h = \frac{\partial^2 (\z \circ \phi)}{\partial u_i \, \partial z} \]
exist and are continuous for $i = 1, \ldots, 2 n$.
As the proof given below shows, it is equivalent to assume the second-order partial derivatives $R . (\alpha (d\phi (X)))$ exist and are continuous for all $X \in \xi$.

Before giving the proof of Lemma~\ref{lem:c-one-approx-diff}, we first prove another lemma that together with Lemma~\ref{lem:c-one-approx-diff} gives precise meaning to the statement after Lemma~\ref{lem:c-one-approx-tcds}, regarding contact $C^1$-diffeomorphisms and topological automorphisms of a contact structure or a contact form.

\begin{lem}
Let $\phi_i$ be a sequence of topological automorphisms of the contact structure $\xi$ with topological conformal factors $h_i$, and suppose the homeomorphisms $\phi_i$ converge uniformly to a homeomorphism $\phi$, and the continuous functions $h_i$ converge uniformly to a function $h$.
Then $\phi$ is a topological automorphism of the contact structure $\xi$ with topological conformal factor $h$.
\end{lem}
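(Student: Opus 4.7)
The plan is a standard diagonal subsequence argument exploiting the definition of topological automorphism. For each fixed $i$, Definition~\ref{dfn:topo-auto} applied to $\phi_i$ gives a sequence $\{\phi_{i,j}\}_{j \ge 1}$ of contact diffeomorphisms, with smooth conformal factors $h_{i,j}$ determined by $\phi_{i,j}^* \alpha = e^{h_{i,j}} \alpha$, such that $\phi_{i,j} \to \phi_i$ uniformly and $h_{i,j} \to h_i$ uniformly in the maximum norm as $j \to \infty$.

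For each $i$, choose $j(i)$ large enough that $\dbar(\phi_{i,j(i)}, \phi_i) < 1/i$ and $|h_{i,j(i)} - h_i| < 1/i$, where $|\cdot|$ denotes the maximum norm on continuous functions. Setting $\psi_i = \phi_{i,j(i)}$ and $g_i = h_{i,j(i)}$, I would then apply the triangle inequality twice: $\dbar(\psi_i, \phi) \le \dbar(\psi_i, \phi_i) + \dbar(\phi_i, \phi) \to 0$ by the choice of $j(i)$ and the hypothesis that $\phi_i \to \phi$ uniformly; and similarly $|g_i - h| \le |g_i - h_i| + |h_i - h| \to 0$ using the uniform convergence $h_i \to h$. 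Thus $\psi_i$ is a sequence of contact diffeomorphisms converging uniformly to $\phi$ whose smooth conformal factors $g_i$ converge uniformly to $h$.

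By Definition~\ref{dfn:topo-auto}, this exhibits $\phi$ as a topological automorphism of $\xi$ with topological conformal factor $h$. The well-definedness of the topological conformal factor established in~\cite{ms:tcd11} (recalled right after Definition~\ref{dfn:topo-auto}) then guarantees that $h$ is indeed the canonical conformal factor attached to $\phi$, independently of the approximating sequence chosen.

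The only mild subtlety — and the closest thing to an obstacle — is to make sure that the uniform convergence of the homeomorphisms $\phi_i \to \phi$ is compatible with the $C^0$-metric $\dbar$ used for the approximants, but this is exactly the content of the observation recorded just before Definition~\ref{dfn:contact-metric}: a uniformly convergent sequence of homeomorphisms with homeomorphism limit is automatically Cauchy with respect to $\dbar$. Everything else is a routine bookkeeping of estimates in the diagonal construction.
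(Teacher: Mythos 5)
Your proof is correct and takes exactly the approach the paper intends: the paper's own proof is the one-line instruction ``Choose a diagonal subsequence of contact diffeomorphisms that converge uniformly together with their conformal factors,'' and your argument simply writes out that diagonal construction in full, including the correct observation that uniform convergence to a homeomorphism limit upgrades to $\dbar$-convergence.
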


\begin{proof}
Choose a diagonal subsequence of contact diffeomorphisms that converge uniformly together with their conformal factors.
\end{proof}

Analogous results for strictly contact $C^1$-diffeomorphisms and for topological automorphisms of the contact form $\alpha$ follow immediately, and the corresponding results for symplectic $C^1$-diffeomorphisms can be proved similarly.

\begin{proof}[Proof of Lemma~\ref{lem:c-one-approx-diff}]
Consider the set of $C^1$-diffeomorphisms $\varphi$ of $M$ for which the second-order partial derivatives $X . (\alpha (d\varphi (R)))$ exist and are continuous for all $X \in \xi$.
Equip this set with the topology induced by the subbasis consisting of sets of the form ${\mathcal N} (\varphi; U, V \to \R^{2 n + 1}; K; \epsilon)$, where $U, V \to \R^{2 n + 1}$ are local Darboux coordinates as above, $K \subset U$ is a compact subset with $\varphi (K) \subset V$, and $0 < \epsilon \le + \infty$.
Then $\psi \in {\mathcal N} (\varphi; U, V \to \R^{2 n + 1}; K; \epsilon)$ if $\psi (K) \subset V$ and the local representations of the restrictions to $K$ of $\varphi$ and $\psi$ together with their first-order partial derivatives and the second-order partial derivatives specified above are within $\epsilon$ of each other.
By a straightforward modification of a classical theorem in differential topology (see for instance Theorem~2.7 in \cite{hirsch:dt94}), there exist $C^\infty$-diffeomorphisms $\psi_i$ that $C^1$-converge to $\phi$, and moreover, the partial derivatives $X . (\alpha (d\psi_i (R)))$ converge uniformly to the continuous function $X . (\alpha (d\phi (R)))$ for all $X \in \xi$.

Define smooth functions $h_i$ by $(\psi_i^* \alpha) (R) = e^{h_i}$, and write $\psi_i^* \alpha = e^{h_i} \alpha + \beta_i$ for unique one-forms $\beta_i$ with $\beta_i (R) = 0$.
By hypothesis, the one-forms $\psi_i^* \alpha$ converge uniformly to the one form $\phi^* \alpha = e^h \alpha$, so the functions $h_i$ converge to $h$ and the one-forms $\beta_i$ converge to zero uniformly.
Moreover,
	\[ d (e^{h_i}) \wedge \alpha + e^{h_i} d \alpha + d\beta_i = d (\psi_i^* \alpha) = \psi_i^* (d\alpha) \longrightarrow \phi^* (d\alpha) = d (\phi^* \alpha) = d (e^h \alpha). \]
The function $e^h$ need not be $C^1$-smooth, but the one-form $e^h \alpha$ is continuously differentiable.
By the hypothesis on the second-order partial derivatives, the two-forms $d (e^{h_i}) \wedge \alpha$ converge uniformly to the two-form $d (e^h) \wedge \alpha$, which makes sense because the latter only contains partial derivatives of $e^h$ in the directions of the hyperplanes $\xi$.
Therefore the two-forms $d\beta_i$ converge to zero uniformly.

Define a sequence of one-parameter families of one-forms by
	\[ \alpha_i^t = (1 - t) e^{h_i} \alpha + t \, \psi_i^* \alpha = e^{h_i} \alpha + t \, \beta_i \]
with $d\alpha_i^t = d (e^{h_i}) \wedge \alpha + e^{h_i} d\alpha + t \, d\beta_i$.
Since $\beta_i$ and $d\beta_i$ converge to zero uniformly, the $(2 n + 1)$-forms $\nu_{\alpha_i^t} > 0$ for $i$ sufficiently large, and $\alpha_i^t$ is a one-parameter family of contact forms.
By Gray's stability theorem (see e.g.\ \cite{geiges:ict08}), there exist families of vector fields $X_i^t$ so that $(\phi_{X_i}^t)^* \alpha_i^t = e^{f_i^t} \alpha_i^0 = e^{h_i + f_i^t} \alpha$ for all $t$ and all $i$ sufficiently large, where
	\[ f_i^t = \int_0^t g_i^s \circ \phi_{X_i}^s \, ds, \]
and the functions $g_i^t$ are defined below.
Define $\phi_i = \psi_i \circ \phi_{X_i}^1$.
The diffeomorphisms $\phi_i$ are contact with $\phi_i^* \alpha = e^{h_i + f_i^1} \alpha$.

Since the one-forms $\alpha_i^t$ are smooth, the vector fields $X_i^t$ and their flows are $C^\infty$-smooth, and are given by the defining relations $X_i^t \in \ker \alpha_i^t$ and
	\[ \iota (X_i^t) d\alpha_i^t = g_i^t \alpha_i^t - \frac{d}{dt} \alpha_i^t = g_i^t \alpha_i^t - \beta_i, \]
where the smooth functions $g_i^t$ are defined by
	\[ g_i^t = \left( \frac{d}{dt} \alpha_i^t \right) (R_i^t) = \beta_i (R_i^t), \]
and $R_i^t$ denotes the Reeb vector field of $\alpha_i^t$.
In particular, if $\psi_i$ is contact at a point $p$, then $X_i^t (p) = 0$, and thus the isotopy $\{ \phi_{X_i}^t \}$ is stationary at $p$, and $\phi_i (p) = \psi_i (p)$.

Since the one-forms $\alpha_i^t$ and the two-forms $d\alpha_i^t$ converge uniformly to $\phi^* \alpha$ and $\phi^* (d\alpha)$, respectively, the smooth vector fields $R_i^t$ converge uniformly to the vector field $\phi_* R$, and in particular have bounded coefficients independent of $i$ and $t$.
Since $\beta_i \to 0$ uniformly, the functions $g_i^t$ converge to zero uniformly.
As a consequence, the vector fields $X_i^t$ converge to zero uniformly as well.
By the standard continuity theorem from the theory of ordinary differential equations, their flows $\phi_{X_i}^t$ converge to the identity uniformly.
Therefore the sequence $\phi_i$ converges to $\phi$ uniformly, and the conformal factors $f_i$ converge to zero uniformly.
\end{proof}

Similarly to the case of smooth isotopies, we can also consider topological or continuous contact isotopies whose time-zero map is not necessarily the identity.
That is, we consider isotopies $\Phi = \psi \circ \Phi_H = \{ \psi \circ \phi_H^t \}$, where $\Phi_H$ is a topological contact isotopy in the usual sense, with $\phi_H^0 = \id$, and $\psi \in \Aut (M,\xi)$.
This isotopy corresponds to the topological Hamiltonian $F = (e^g \cdot H) \circ \psi^{-1}$, where $g$ is the unique topological conformal factor of the homeomorphism $\psi$.
Again by a slight abuse of notation, we write $\Phi_F = \Phi$.
Note that one could also work with isotopies of the form $\Phi = \Phi_H \circ \psi$.
Again if no explicit mention of the time-zero map of an isotopy is made, it is assumed to be the identity.

\section{Contact homeomorphisms} \label{sec:contact-homeo}
Denote by
\begin{align} \label{eqn:ev-homeo}
	\ev_1 \colon \TCDS (M,\alpha) \rightarrow \Homeo (M), \ (\Phi_H, H, h) \mapsto \phi_H^1,
\end{align}
the \emph{time-one evaluation map} that assign to a topological contact dynamical system $(\Phi_H, H, h)$ the time-one map $\phi = \phi_H^1$ of the isotopy $\Phi_H$.

\begin{dfn}[Contact homeomorphism \cite{ms:tcd11}]
A \emph{contact homeomorphism} is the time-one map of a topological contact isotopy.
The group $\Homeo (M,\xi)$ of contact homeomorphisms is the image of the time-one evaluation map (\ref{eqn:ev-homeo}).
\end{dfn}

It is shown in \cite{ms:tcd11} that $\Homeo (M,\xi)$ indeed forms a group, and thus is a topological subgroup of the group $\Homeo (M)$ of homeomorphisms of the manifold $M$ with the $C^0$-topology induced by the $C^0$-metric.
Moreover,
\begin{align} \label{eqn:set-inclusions}
	\Diff (M,\xi) \subset \Homeo (M,\xi) \vartriangleleft \Aut (M,\xi) \subset \Homeo (M),
\end{align}
where the second inclusion is as a normal subgroup in the group of topological automorphisms of the contact structure $\xi$.
Properness of the first inclusion is proved in section~\ref{sec:non-smooth} for all contact manifolds.

\begin{pro}[\cite{ms:tcd11}] \label{pro:path-connect}
The group $\Homeo (M,\xi)$ is path-connected under the $C^0$-topology.
Thus $\Homeo (M,\xi) \vartriangleleft \Aut_0 (M,\xi) \subset \Homeo_0 (M)$.
\end{pro}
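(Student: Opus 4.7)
The plan is to observe that every contact homeomorphism $\phi$ comes equipped with a canonical $C^0$-continuous path to the identity, namely the underlying isotopy of the topological contact isotopy witnessing $\phi \in \Homeo(M,\xi)$. By definition, $\phi = \phi_H^1$ for some topological contact isotopy $\Phi_H = \{ \phi_H^t \}_{0 \le t \le 1}$ with $\phi_H^0 = \id$, and since such an isotopy is by definition continuous in the $C^0$-topology, the map $s \mapsto \phi_H^s$ is automatically a $C^0$-continuous path in $\Homeo(M)$. Since $\Homeo(M,\xi)$ is a group, path-connectedness will follow once I show that every intermediate map $\phi_H^s$ itself lies in $\Homeo(M,\xi)$.

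To establish this missing point I would use time-reparameterization. Let $(\Phi_{H_i}, H_i, h_i)$ be a Cauchy sequence of smooth contact dynamical systems converging in $d_\alpha$ to $(\Phi_H, H, h)$. For fixed $s \in [0,1]$, consider the reparameterized smooth isotopies $\Psi_i^t = \phi_{H_i}^{st}$, $t \in [0,1]$; these are smooth contact isotopies with Hamiltonians $s H_i(st, \cdot)$ and conformal factors $h_i(st, \cdot)$. Uniform convergence of the isotopies and of the conformal factors is preserved under restriction to the subinterval $[0,s] \subset [0,1]$, and the change of variables $u = st$ gives $\| s H_i(st,\cdot) - s H_j(st,\cdot) \|_\oneinfty \le \| H_i - H_j \|_\oneinfty$, so the reparameterized sequence is again Cauchy in $d_\alpha$. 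Its limit is a topological contact dynamical system whose time-one map is $\phi_H^s$, so $\phi_H^s \in \Homeo(M,\xi)$ for every $s$, and the path $s \mapsto \phi_H^s$ lies entirely in $\Homeo(M,\xi)$.

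For the final chain of inclusions, observe that the same path lies in $\Aut(M,\xi)$ by (\ref{eqn:set-inclusions}), so its endpoint $\phi$ is in the path component $\Aut_0(M,\xi)$ of the identity in $\Aut(M,\xi)$; likewise, the path lies in $\Homeo(M)$ and places $\phi$ in $\Homeo_0(M)$. Normality $\Homeo(M,\xi) \vartriangleleft \Aut_0(M,\xi)$ is then inherited from the already known normality $\Homeo(M,\xi) \vartriangleleft \Aut(M,\xi)$ in (\ref{eqn:set-inclusions}). The only nontrivial step in this program is the reparameterization argument of the middle paragraph; I do not expect a genuine obstacle, but the bookkeeping against the specific form of the contact metric (including the maximum-norm bound on conformal factors and the $L^\oneinfty$-bound on Hamiltonians) has to be carried out with mild care.
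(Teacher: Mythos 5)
Your proposal is correct and follows essentially the same route as the paper: the paper invokes its Lemma~\ref{lem:rep} (that each reparameterized triple $(\Phi_{H^s}, H^s, h^s)$ is again a topological contact dynamical system with time-one map $\phi_H^s$) and deduces the inclusions from (\ref{eqn:set-inclusions}), while you re-derive the reparameterization lemma directly from the Cauchy condition on the approximating sequence. The only stylistic difference is that you inline the proof of that lemma rather than citing it; your verification of the $L^\oneinfty$-bound and the normality step are both sound.
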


In order to give the proof, we need to recall the following lemma from \cite{ms:tcd11}.

\begin{lem}[\cite{ms:tcd11}] \label{lem:rep}
Let $(\Phi_H, H, h)$ be a topological contact dynamical system.
For every $s \in [0,1]$, the triple $(\Phi_{H^s}, H^s, h^s)$ is also a topological contact dynamical system with time-one map $\phi_H^s$, where $\Phi_{H^s} = \{ \phi_{H^s}^t \} = \{ \phi_H^{s t} \}$, and the topological Hamiltonian $H^s$ and the topological conformal factor $h^s$ are given by the formulas $H^s (t, x) = s H (s t, x)$ and $h^s (t, x) = h (s t, x)$.
\end{lem}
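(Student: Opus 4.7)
The plan is to verify the statement first in the smooth category, and then to pass to the topological case by approximation. Concretely, I will first check that if $(\Phi_H, H, h)$ happens to be a smooth contact dynamical system, then $(\Phi_{H^s}, H^s, h^s)$ as defined by the formulas $\phi_{H^s}^t = \phi_H^{s t}$, $H^s(t,x) = s H(s t,x)$, $h^s(t,x) = h(s t,x)$ is again a smooth contact dynamical system. For this, the chain rule yields
\begin{align*}
\frac{d}{dt} \phi_{H^s}^t = s \, X_H^{s t} \circ \phi_H^{s t} = (s X_H^{s t}) \circ \phi_{H^s}^t,
\end{align*}
so the time-dependent contact vector field generating $\Phi_{H^s}$ is $s X_H^{s t}$, whose Hamiltonian is $\alpha(s X_H^{s t}) = s H(s t, \cdot) = H^s(t, \cdot)$. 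The conformal factor identity is immediate: $(\phi_{H^s}^t)^* \alpha = (\phi_H^{s t})^* \alpha = e^{h(s t, \cdot)} \alpha = e^{h^s(t, \cdot)} \alpha$. This also shows $\phi_{H^s}^1 = \phi_H^s$.

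Next, given a general topological contact dynamical system $(\Phi_H, H, h)$, choose a sequence $(\Phi_{H_i}, H_i, h_i)$ of smooth contact dynamical systems converging to it in the $L^\oneinfty$-contact metric. By the smooth case above, the reparameterized triples $(\Phi_{H_i^s}, H_i^s, h_i^s)$ are smooth contact dynamical systems. It suffices to show that this sequence is Cauchy, and in fact converges to $(\Phi_{H^s}, H^s, h^s)$ in the $L^\oneinfty$-contact metric; the conclusion then follows directly from the definition of topological contact dynamical system (or from Lemma~\ref{lem:c-one-approx-tcds}). The three pieces are:
\begin{align*}
\dbar(\Phi_{H^s}, \Phi_{H_i^s}) &= \max_{0 \le t \le 1} \dbar(\phi_H^{s t}, \phi_{H_i}^{s t}) \le \dbar(\Phi_H, \Phi_{H_i}), \\
| h^s - h_i^s | &= \max_{0 \le t \le 1} \max_{x \in M} | h(s t, x) - h_i(s t, x) | \le | h - h_i |, \\
\| H^s - H_i^s \|_\oneinfty &= \int_0^1 s \, \| H(s t, \cdot) - H_i(s t, \cdot) \| \, dt = \int_0^s \| H(u, \cdot) - H_i(u, \cdot) \| \, du,
\end{align*}
where in the last line I made the substitution $u = s t$. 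The final integral is bounded by $\| H - H_i \|_\oneinfty$, so all three quantities tend to zero as $i \to \infty$.

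I do not anticipate any serious obstacle: the only genuine computation is the change of variables in the $L^\oneinfty$-norm, and the contraction factor $s$ in the definition of $H^s$ is precisely what cancels the Jacobian $1/s$ of that substitution. The case $s = 0$ reduces to the constant identity isotopy with vanishing Hamiltonian and conformal factor, so nothing special is required there. The same argument applies verbatim in the $L^\infty$ setting and for the strictly contact case, in which $h \equiv 0$ forces $h^s \equiv 0$.
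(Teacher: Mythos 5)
Your proof is correct. The paper does not reprove this lemma (it is quoted from \cite{ms:tcd11}), but the strategy you use---verify the formulas directly in the smooth case via the chain rule, then approximate a topological contact dynamical system by smooth ones and check that the reparameterization operation is continuous with respect to all three components of the contact distance---is the natural and expected argument, and all of your estimates (the two trivial $\max$-bounds and the substitution $u = st$ which absorbs the Jacobian $1/s$) are accurate.
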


\begin{proof}[Proof of Proposition~\ref{pro:path-connect}]
The first statement follows at once from Lemma~\ref{lem:rep}, and the inclusions in the second statement are consequences of (\ref{eqn:set-inclusions}).
\end{proof}

A different topology that takes into account the topological \emph{and} the dynamical nature of contact homeomorphisms defined as time-one maps of topological contact isotopies is studied in the next section.

The definitions and results in this section are again also valid with topological contact dynamical systems replaced by continuous contact dynamical systems.
In order to distinguish the two cases, we attach the subscripts or superscripts $\oneinfty$ and $\infty$ where appropriate.
In particular, the groups of contact homeomorphisms $\Homeo_\oneinfty (M,\xi)$ and $\Homeo_\infty (M,\xi)$ denote the time-one maps of topological and continuous contact dynamical systems, respectively.
In this case, the distinction is actually not necessary.

\begin{thm} \label{thm:oneinfty-infty}
The two groups $\Homeo_\oneinfty (M,\xi)$ and $\Homeo_\infty (M,\xi)$ of contact homeomorphisms coincide.
\end{thm}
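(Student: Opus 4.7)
The inclusion $\Homeo_\infty(M,\xi) \subseteq \Homeo_\oneinfty(M,\xi)$ I would dispatch immediately from the pointwise inequality $\|\cdot\|_\oneinfty \le \|\cdot\|_\infty$, which implies the tautological inclusion $\CCDS(M,\alpha) \subseteq \TCDS(M,\alpha)$ with the same assignment of time-one maps under the evaluation map $\ev_1$.

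For the reverse inclusion, my plan is to reduce the statement to the main lemma of section~\ref{sec:contact-homeo}: up to an arbitrarily small $L^\oneinfty$-perturbation that fixes the time-one map, the choice between the $L^\oneinfty$- and $L^\infty$-contact metrics is immaterial. Given $\phi \in \Homeo_\oneinfty(M,\xi)$, I would first pick $(\Phi, H, h) \in \TCDS(M,\alpha)$ with $\phi_H^1 = \phi$ together with a smooth $L^\oneinfty$-Cauchy approximating sequence $(\Phi_{H_i}, H_i, h_i)$. I would then apply the main lemma inductively to each successive difference $(\Phi_{H_i}, H_i, h_i)^{-1} \circ (\Phi_{H_{i+1}}, H_{i+1}, h_{i+1})$ to produce a smooth contact dynamical system with the same time-one map as that difference but with $L^\infty$-contact size bounded by, say, $2^{-i}$. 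Composing these replacements telescopically with $(\Phi_{H_1}, H_1, h_1)$ then yields a new smooth sequence $(\Phi_{F_i}, F_i, f_i)$ with $\phi_{F_i}^1 = \phi_{H_i}^1$ for every $i$, which is $L^\infty$-Cauchy by construction and whose time-one maps still converge in the $C^0$-topology to $\phi$. The $L^\infty$-limit of this new sequence is thus a continuous contact dynamical system realizing $\phi$, so $\phi \in \Homeo_\infty(M,\xi)$.

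The only nontrivial ingredient is the main lemma itself, whose proof is deferred to sections~\ref{sec:regular}--\ref{sec:proof-main} and will rely on the regular-isotopy property of generic smooth contact isotopies, on careful reparameterizations that equalize the speed of travel of an isotopy through $\Diff(M,\xi)$, and on the multi-parameter variations of the constant loop constructed along the way. That lemma is where the entire difficulty of this theorem resides; once it is in hand, Theorem~\ref{thm:oneinfty-infty} is a short and clean corollary via the telescoping argument sketched above, and the analogous statement for strictly contact homeomorphisms will follow by the same reasoning after verifying that the main lemma can be carried out within $\TSCDS(M,\alpha)$.
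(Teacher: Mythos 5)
The easy inclusion is fine. For the reverse direction you have the right key ingredient but you make it work twice. The Main Lemma is stated for an \emph{arbitrary} topological contact dynamical system $(\Phi_H, H, h)$, and its conclusion already produces a continuous contact dynamical system $(\Phi_F, F, f)$ with $\phi_F^1 = \phi_H^1$. So once $\phi \in \Homeo_\oneinfty(M,\xi)$ is realized by some $(\Phi_H, H, h) \in \TCDS(M,\alpha)$, a \emph{single} application of Lemma~\ref{lem:main} places $\phi$ in the image of the evaluation map on $\CCDS(M,\alpha)$, and the theorem follows immediately---this one-line derivation is the paper's proof.

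The telescoping scheme you layer on top---forming the smooth differences $(\Phi_{H_i},H_i,h_i)^{-1} \circ (\Phi_{H_{i+1}},H_{i+1},h_{i+1})$, replacing each by an $L^\infty$-small substitute with the same end points, and passing to the limit---is not a corollary of the Main Lemma but a re-derivation of its proof (steps~1--5 of section~\ref{sec:proof-main}). As sketched it also slides over the points where the difficulty actually lives. The Main Lemma outputs continuous, not smooth, systems (smooth except possibly at $t=1$); the tool that trades a smooth difference for a \emph{smooth} small-$L^\infty$ substitute with matching end points is Lemma~\ref{lem:tech-lem}, which the paper uses internally. And the paper assembles by concatenation on shrinking subintervals $[t_{i-1}, t_i]$ with $t_i = 1 - 2^{-i}$, where~(\ref{eqn:interval-change}) inflates the $L^\infty$-norm by $2^i$ and so dictates the fast-decaying budget $\epsilon_i = \frac{1}{3}\bigl(\frac{1}{2}\bigr)^{2i-1}$; the group composition you describe would instead accumulate $e^{h}$-factors through Lemma~\ref{lem:contact-ham} that need a uniform bound. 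These subtleties live inside the Main Lemma, not in the step from the lemma to the theorem.
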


In view of this theorem, we may omit the subscripts from the notation.
This result is the analog of a theorem concerning the Hamiltonian homeomorphism group of a symplectic manifold that appeared in \cite{mueller:ghh08, mueller:ghl08}, and the line of proof follows the one given there.
The two main differences in the present case are the additional term $| c ( H_t ) |$ in the norm $\| H_t \|$ at each time $t$, and the appearance of the conformal factors in the formulas for composition and inversion of contact isotopies as well as for conjugation by a contact diffeomorphism.
We will in fact demonstrate the following more technical result.

\begin{lem}[Main Lemma] \label{lem:main}
Let $(\Phi_H, H, h)$ be a topological contact dynamical system.
Then there exists a continuous contact dynamical system $(\Phi_F, F, f)$ with the same time-one map $\phi_F^1 = \phi_H^1$.
Given $\epsilon > 0$, the continuous contact dynamical system $(\Phi_F, F, f)$ can be chosen so that either
\begin{align} \label{eqn:small-perturbation}
	\dbar (\Phi_F, \Phi_H) < \epsilon, \ \ | f - h | < \epsilon, \ \ \mbox{and} \ \ \| F - H \|_\oneinfty < \epsilon,
\end{align}
or
\begin{align} \label{eqn:close-to-id}
	\dbar (\Phi_F, \id) < \dbar (\Phi_H, \id) + \epsilon, \ \ | f | < | h | + \epsilon, \ \ \mbox{and} \ \ \| F \|_\infty < \| H \|_\oneinfty + \epsilon.
\end{align}
In fact, $(\Phi_F, F, f)$ is smooth everywhere except possibly at time one, i.e.\ the maps $(t, x) \mapsto \phi_F^t (x)$, $(t, x) \mapsto (\phi_F^t (x))^{-1}$, $(t, x) \mapsto F_t (x)$, and $(t, x) \mapsto f_t (x)$ are smooth except possibly at $t = 1$.
\end{lem}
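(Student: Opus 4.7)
The strategy is to reparameterize smooth approximations so that they have nearly constant $L^\infty$-norm (converting $L^\oneinfty$-bounds into $L^\infty$-bounds), and then to patch reparameterized pieces together on subintervals accumulating at $t=1$ in order to attain the exact time-one map $\phi_H^1$ in the limit. By definition of a topological contact dynamical system, fix a smooth Cauchy sequence $(\Phi_{H_i}, H_i, h_i) \to (\Phi_H, H, h)$ in the $L^\oneinfty$-contact metric, and by extracting a subsequence arrange that $\dbar(\Phi_{H_{i+1}}, \Phi_{H_i}) + |h_{i+1} - h_i| + \|H_{i+1}-H_i\|_\oneinfty < 4^{-i}\epsilon$. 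Invoking the regular-isotopy approximations of section~\ref{sec:regular} together with the multi-parameter variations of the constant loop from section~\ref{sec:rep}, replace each $H_i$ by a regular smooth Hamiltonian — meaning $\|H_{i,t}\|>0$ for all $t$ — with unchanged time-one map and arbitrarily small corrections in all three metrics.

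For each regular smooth $H_i$, the ODE $\zeta_i'(t) = \|H_i\|_\oneinfty / \|H_{i,\zeta_i(t)}\|$ with $\zeta_i(0)=0$ admits a smooth solution $\zeta_i\colon[0,1]\to[0,1]$, and the reparameterized Hamiltonian $\widetilde H_i(t,x) = \zeta_i'(t) H_i(\zeta_i(t),x)$ satisfies $\|\widetilde H_i(t)\|\equiv\|H_i\|_\oneinfty$, so $\|\widetilde H_i\|_\infty = \|H_i\|_\oneinfty$. Reparameterization preserves the time-one map: $\phi_{\widetilde H_i}^1 = \phi_{H_i}^1$. To assemble a continuous contact dynamical system attaining $\phi_H^1$ exactly, fix a nested decomposition $[0,1) = [0,1-\delta]\cup\bigsqcup_{k\ge 0} I_k$ with each $I_k$ of length proportional to $2^{-k}$ accumulating at $t=1$. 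On $[0,1-\delta]$ place a rescheduling of $\widetilde H_{n_0}$. On each $I_k$ insert, after rescheduling and constant-speed reparameterization, a bridge isotopy from $\phi_{H_{n_k}}^1$ to $\phi_{H_{n_{k+1}}}^1$ generated using $\Hbar_{n_k}\#H_{n_{k+1}}$ from Lemma~\ref{lem:contact-ham}, whose $L^\oneinfty$-norm is at most $4^{-k}\epsilon$ by the subsequence choice. This produces a triple $(\Phi_F,F,f)$ smooth on $[0,1)\times M$ with $\phi_F^t \to \phi_H^1$ as $t\to 1^-$; the geometric summability of the tail contributions shows that $F$ is $L^\infty$-approximable by smooth Hamiltonians and hence defines an element of $\CCDS(M,\alpha)$. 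Tuning the indices $n_k$ and the partition yields (\ref{eqn:small-perturbation}); choosing $\delta$ small and letting the main piece carry the full $\widetilde H_{n_0}$ with $L^\infty$-norm $\|H_{n_0}\|_\oneinfty \le \|H\|_\oneinfty + \epsilon/2$, while the tail bridges contribute less than $\epsilon/2$, yields (\ref{eqn:close-to-id}).

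The principal obstacle is the balance in the patching step: rescheduling a bridge from $[0,1]$ onto an interval of length proportional to $2^{-k}$ multiplies its generating Hamiltonian by a factor of order $2^k$, so keeping the $L^\infty$-contribution bounded requires the $L^\oneinfty$-norm of the $k$-th bridge to decay faster than $2^{-k}$, while the bridge must still genuinely close the $C^0$-gap between $\phi_{H_{n_k}}^1$ and $\phi_{H_{n_{k+1}}}^1$; the rate $4^{-k}$ accommodates both demands. A secondary obstacle is ensuring that the regularization in the first step achieves $\|H_{i,t}\| > 0$ for all $t$ without altering the time-one map and with arbitrarily small corrections in all metrics — this is precisely the role of the multi-parameter variations of the constant loop constructed in section~\ref{sec:rep}, which permit inserting arbitrarily small trivial loops that lift the Hamiltonian off zero while leaving the endpoint fixed.
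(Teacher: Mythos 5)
Your plan is essentially the paper's proof: both replace the Cauchy sequence $(\Phi_{H_i},H_i,h_i)$ by a sequence of bridges $\Hbar_{n_k}\#H_{n_{k+1}}$ (Lemma~\ref{lem:contact-ham}), regularize using Proposition~\ref{pro:regular}, reparameterize to constant speed via the inverse of $t\mapsto \int_0^t \|G_s\|\,ds / \|G\|_\oneinfty$ to trade $L^\oneinfty$ for $L^\infty$ control, reschedule the pieces onto dyadic subintervals accumulating at $t=1$, and let the geometric decay of $\|K_i\|_\oneinfty$ dominate the $2^k$ rescaling factor, exactly as in the paper's Steps 1--5.

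Two points you gloss over deserve attention. First, the constant-speed function $\zeta_i$ defined by your ODE is only $C^1$ in general, since $s\mapsto \|H_{i,s}\|=\osc(H_{i,s})+|c(H_{i,s})|$ involves max, min, and absolute value; the paper approximates $\zeta$ in $C^1$ by a genuinely smooth diffeomorphism before proceeding, at the cost of an extra $\epsilon$, and you should do the same. Second and more importantly, before the pieces can be rescheduled onto the intervals $I_k$ and concatenated you must make each of them \emph{boundary flat} (Lemma~\ref{lem:approx-bdy-flat}). Without this step the Hamiltonian of the concatenated isotopy jumps at the endpoints $t_k$ of the subintervals, so the resulting $(\Phi_F,F,f)$ fails the assertion that it is smooth on $[0,1)$, which is part of the statement you are proving. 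The paper inserts the boundary-flattening pass explicitly as Step~3, between the constant-speed reparameterization and the rescheduling; with that inserted, your argument closes.
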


In section~\ref{sec:regular} we explain a procedure for regularizing a smooth contact isotopy that is similar to the regularization of smooth Hamiltonian isotopies carried out in Section~5.2 in \cite{polterovich:ggs01}.
This step is crucial in the proof in the Hamiltonian case in \cite{mueller:ghl08, mueller:ghh08} and in the contact case in the present paper.
It implies that generically, in a sense to be made precise below, the tangent vector to a contact isotopy never vanishes.
As in the case of curves in finite-dimensional manifolds, one can then reparameterize the isotopy to have nearly constant speed throughout.
After some preparations in the subsequent section~\ref{sec:rep}, Lemma~\ref{lem:main} will be proved in Section~\ref{sec:proof-main}.
Assuming the Main Lemma~\ref{lem:main}, we first deduce Theorem~\ref{thm:oneinfty-infty}.

\begin{proof}[Proof of Theorem~\ref{thm:oneinfty-infty}]
The inclusion $\Homeo_\infty (M,\xi) \subset \Homeo_\oneinfty (M,\xi)$ follows immediately from the definitions.
To prove the theorem, it only remains to show $\Homeo_\oneinfty (M,\xi) \subset \Homeo_\infty (M,\xi)$.
Let $\phi \in \Homeo_\oneinfty (M,\xi)$.
By definition, there exists a topological contact dynamical system $(\Phi_H, H, h)$ such that $\phi = \phi_H^1$.
By Lemma~\ref{lem:main}, there exists a continuous contact dynamical system $(\Phi_F, F, f)$ with the same time-one map $\phi_F^1 = \phi$, and thus $\phi \in \Homeo_\infty (M,\xi)$.
\end{proof}

Denote by
\begin{align} \label{eqn:ev-strictly-homeo}
	\ev_1 \colon \TSCDS (M,\alpha) \rightarrow \Homeo (M), \ (\Phi_H, H) \mapsto \phi_H^1
\end{align}
the \emph{time-one evaluation map} that maps a topological strictly contact dynamical system $(\Phi_H, H)$ to its time-one map $\phi_H^1$.
This notation is not ambiguous, since the restriction of the time-one evaluation map (\ref{eqn:ev-homeo}) on $\TCDS (M,\alpha)$ to $\TSCDS (M,\alpha)$ coincides with the time-one evaluation map (\ref{eqn:ev-strictly-homeo}).
The same observation applies to the restrictions to continuous contact and strictly contact dynamical systems.

\begin{dfn}[Strictly contact homeomorphism \cite{banyaga:uch12}]
The time-one map of a topological strictly contact isotopy is a \emph{strictly contact homeomorphism}.
The group $\Homeo (M,\alpha)$ of strictly contact homeomorphisms is the image of the map (\ref{eqn:ev-strictly-homeo}).
\end{dfn}

By \cite{banyaga:uch12}, the set $\Homeo (M,\alpha)$ indeed forms a group, and thus is a topological subgroup of $\Homeo (M)$ with the $C^0$-topology.
Moreover \cite{banyaga:uch12, ms:tcd11},
\begin{align} \label{eqn:strictly-set-inclusions}
	\Diff (M,\alpha) \subset \Homeo (M,\alpha) \vartriangleleft \Aut (M,\alpha) \subset \Homeo (M),
\end{align}
and the first inclusion is proper if the contact form $\alpha$ is {\em regular} \cite{banyaga:uch12} .
Concerning strictly contact homeomorphisms, we have the following results.

\begin{pro}[\cite{banyaga:uch12, ms:tcd11}]
The strictly contact homeomorphism group is path-connected in the $C^0$-topology, and $\Homeo (M,\alpha) \vartriangleleft \Aut_0 (M,\alpha) \subset \Homeo_0 (M)$.
\end{pro}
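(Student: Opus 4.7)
The plan is to run the argument for Proposition~\ref{pro:path-connect} in the strictly contact setting, with the role of Lemma~\ref{lem:rep} played by its obvious strictly contact analog. Specifically, I first observe that if $(\Phi_H, H)$ is a topological strictly contact dynamical system, i.e.\ $(\Phi_H, H, 0) \in \TSCDS(M,\alpha)$, then the reparameterization $(\Phi_{H^s}, H^s, h^s)$ from Lemma~\ref{lem:rep} has conformal factor $h^s(t,x) = 0 \cdot (st, x) = 0$, so it too lies in $\TSCDS(M,\alpha)$ and its time-one map $\phi_H^s$ is a strictly contact homeomorphism. This gives the reparameterization lemma for topological strictly contact isotopies as a free corollary.

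For path-connectedness, given $\phi \in \Homeo(M,\alpha)$, write $\phi = \phi_H^1$ for some $(\Phi_H, H) \in \TSCDS(M,\alpha)$. The assignment $s \mapsto \phi_H^s$ is then a map $[0,1] \to \Homeo(M,\alpha)$ connecting $\id$ to $\phi$, and it is continuous in the $C^0$-topology because the underlying topological isotopy $\Phi_H = \{\phi_H^t\}$ is by definition a continuous isotopy of homeomorphisms of $M$.

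To establish normality of $\Homeo(M,\alpha)$ in $\Aut_0(M,\alpha)$, I invoke the strictly contact transformation law from Theorem~\ref{thm:trafo-law}: if $\psi \in \Aut(M,\alpha)$ and $(\Phi_H, H) \in \TSCDS(M,\alpha)$, then
\[ \psi^{-1} \circ (\Phi_H, H) \circ \psi = (\psi^{-1} \circ \Phi_H \circ \psi, \, H \circ \psi) \]
is again a topological strictly contact dynamical system, whose time-one map is precisely $\psi^{-1} \circ \phi \circ \psi$; hence the latter lies in $\Homeo(M,\alpha)$. Combined with the inclusion $\Homeo(M,\alpha) \subset \Aut(M,\alpha)$ from (\ref{eqn:strictly-set-inclusions}), this gives $\Homeo(M,\alpha) \vartriangleleft \Aut_0(M,\alpha)$, provided we know $\Homeo(M,\alpha) \subset \Aut_0(M,\alpha)$. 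But the $C^0$-path $s \mapsto \phi_H^s$ constructed above lies inside $\Aut(M,\alpha)$ and connects $\id$ to $\phi$, so every strictly contact homeomorphism lies in the identity component of $\Aut(M,\alpha)$.

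Finally, the inclusion $\Aut_0(M,\alpha) \subset \Homeo_0(M)$ is immediate from the inclusion $\Aut(M,\alpha) \subset \Homeo(M)$ in (\ref{eqn:strictly-set-inclusions}) together with the fact that the $C^0$-topology on $\Aut(M,\alpha)$ is the restriction of the $C^0$-topology on $\Homeo(M)$, so any $C^0$-path from $\id$ inside $\Aut(M,\alpha)$ is in particular a $C^0$-path inside $\Homeo(M)$. There is no substantive obstacle here: the entire proof is a transcription of the contact case, and the only check worth writing down is that reparameterization preserves the vanishing of the conformal factor, which is manifest from the formula $h^s(t,x) = h(st,x)$.
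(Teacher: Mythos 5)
Your proposal is correct and takes essentially the same approach that the paper relies on for Proposition~\ref{pro:path-connect}, whose proof is cited verbatim: path-connectedness via the reparameterization Lemma~\ref{lem:rep} (noting that $h^s(t,x)=h(st,x)=0$ so the reparameterized system stays in $\TSCDS(M,\alpha)$), and the normal-subgroup inclusions via the strictly contact transformation law of Theorem~\ref{thm:trafo-law} and the chain (\ref{eqn:strictly-set-inclusions}). The one observation you correctly flag as needing verification --- that reparameterization preserves vanishing of the conformal factor --- is indeed the only new point in passing from the contact to the strictly contact setting.
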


All of the above definitions make sense for continuous strictly contact dynamical systems, and the preceding results and their proofs are verbatim the same.

\begin{thm} \label{thm:strictly-infty-oneinfty}
The strictly contact homeomorphism groups $\Homeo_\oneinfty (M,\alpha)$ and $\Homeo_\infty (M,\alpha)$ coincide.
\end{thm}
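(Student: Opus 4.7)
The plan is to mirror the proof of Theorem~\ref{thm:oneinfty-infty} by first establishing a strictly contact version of the Main Lemma~\ref{lem:main}. Namely, I would prove that given a topological strictly contact dynamical system $(\Phi_H, H)$ and $\epsilon > 0$, there exists a continuous strictly contact dynamical system $(\Phi_F, F)$ with the same time-one map $\phi_F^1 = \phi_H^1$, satisfying either $\dbar(\Phi_F, \Phi_H) < \epsilon$ and $\|F - H\|_\oneinfty < \epsilon$, or $\dbar(\Phi_F, \id) < \dbar(\Phi_H, \id) + \epsilon$ and $\|F\|_\infty < \|H\|_\oneinfty + \epsilon$. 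The conformal factor terms disappear because both $h$ and $f$ vanish identically in the strictly contact setting.

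Assuming such a lemma, the theorem follows by a word-for-word copy of the proof of Theorem~\ref{thm:oneinfty-infty}: the inclusion $\Homeo_\infty (M,\alpha) \subset \Homeo_\oneinfty (M,\alpha)$ is immediate from the definitions, since an $L^\infty$-approximating sequence of smooth strictly contact dynamical systems is also an $L^\oneinfty$-approximating sequence. For the reverse inclusion, if $\phi \in \Homeo_\oneinfty (M,\alpha)$ then $\phi = \phi_H^1$ for some topological strictly contact dynamical system $(\Phi_H, H)$, and the strictly contact main lemma produces a continuous strictly contact dynamical system $(\Phi_F, F)$ with $\phi_F^1 = \phi$, whence $\phi \in \Homeo_\infty (M,\alpha)$.

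To prove the strictly contact main lemma, I would adapt the three-step program sketched for Lemma~\ref{lem:main}: first replace the approximating sequence of smooth strictly contact systems by regular ones in the sense of section~\ref{sec:regular}, then reparameterize so that the speed along the isotopy is nearly constant as in section~\ref{sec:rep}, and finally absorb the discrepancy between the $L^\oneinfty$ and $L^\infty$ norms using the multi-parameter variations of the constant loop constructed there. The essential point is that each of these operations must be performed \emph{within} the class of strictly contact isotopies. This requires, by the characterization $\L_{X_t}\alpha = 0 \iff R_\alpha.H_t = 0$, that all perturbing Hamiltonians be chosen basic and that the reparameterizations be taken in the time variable only: a time reparameterization $H^{s,\tau}(t,x) = s H(st,x)$ of a basic Hamiltonian remains basic, and conjugation by a strictly contact diffeomorphism (Lemma~\ref{lem:contact-ham} with $g = 0$) preserves basicness.

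The main obstacle I expect is the construction of the multi-parameter variations of the constant loop in the strictly contact category: the auxiliary Hamiltonians used to cancel the mean-value and oscillation excess must themselves be basic functions, and the resulting isotopies must have identically vanishing conformal factor rather than merely a small conformal factor. Provided sections~\ref{sec:regular} and \ref{sec:rep} carry out these constructions using only basic generating Hamiltonians, as the introduction announces (\emph{similar results are also proved for strictly contact isotopies}), the argument goes through; otherwise one must additionally project each perturbing Hamiltonian onto the space of basic functions by averaging along the Reeb flow and verify that the resulting systems still satisfy the requisite end-point and norm estimates.
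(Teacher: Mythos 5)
Your reduction of the theorem to a strictly contact version of the Main Lemma, and the deduction from it, is exactly the paper's proof (the paper calls this Lemma~\ref{lem:strictly-main} and gives the same one-sentence derivation). You also correctly identify the real obstacle in proving that lemma: the multi-parameter variations of the constant loop in the regularization step must be generated by \emph{basic} Hamiltonians. However, your fallback --- averaging the perturbing Hamiltonians along the Reeb flow --- cannot close the gap. Example~\ref{exa:three-torus} shows that for $\alpha = \cos z\, dx - \sin z\, dy$ on $T^3$ every basic function depends on $z$ alone, so the differentials $dG_t^j(p)$ of basic functions are all multiples of $dz$ at any point $p$ and can never be linearly independent in the sense required to make the map $I$ of step~3 of Proposition~\ref{pro:regular} a submersion. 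Averaging along the Reeb flow is precisely the projection that forces this degeneracy; it cannot undo it.

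The paper's actual resolution is to prove a weaker regularization statement, Proposition~\ref{pro:strictly-regular}: one can choose a basic loop Hamiltonian of the form $F(t,x) = f(t)$ with $\int_{S^1} f\,dt = 0$ so that $H_t - F_t$ is identically zero only at finitely many times $t_1, \ldots, t_k$. The reparameterization in Lemma~\ref{lem:strictly-tech-lem} is then carried out piecewise: nearly constant speed on the subintervals between the $t_i$, and slope one on small windows $[t_i - \delta, t_i + \delta]$ where $\|G_t\|$ is already below the target average, so the $L^\infty$-norm of the reparameterized Hamiltonian is still bounded by $\|H\|_\oneinfty + \epsilon$. With this adjustment in place of full regularization, the rest of your sketch --- boundary flattening, the Cauchy-sequence gluing from the proof of Lemma~\ref{lem:main}, and the observations that reparameterization and conjugation by strictly contact diffeomorphisms preserve basicness --- goes through, with all conformal factors identically zero as you say.
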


This theorem appears in \cite{banyaga:uch12} for regular contact manifolds.
We will prove the theorem by establishing a result similar to the Main Lemma~\ref{lem:main} for topological strictly contact dynamical systems.
See section~\ref{sec:proof-main} for the proof.

\begin{lem} \label{lem:strictly-main}
Let $(\Phi_H, H)$ be a topological strictly contact dynamical system.
Then there exists a continuous strictly contact dynamical system $(\Phi_F, F)$ with the same time-one map $\phi_F^1 = \phi_H^1$.
Given $\epsilon > 0$, the continuous strictly contact dynamical system $(\Phi_F, F)$ can be chosen so that either
	\[ \dbar (\Phi_F, \Phi_H) < \epsilon \ \ \mbox{and} \ \ \| F - H \|_\oneinfty < \epsilon, \]
or
	\[ \dbar (\Phi_F, \id) < \dbar (\Phi_H, \id) + \epsilon \ \ \mbox{and} \ \ \| F \|_\infty < \| H \|_\oneinfty + \epsilon. \]
In fact, $(\Phi_F, F)$ is smooth everywhere except possibly at time $t = 1$.
\end{lem}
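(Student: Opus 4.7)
The plan is to adapt the proof strategy of the Main Lemma~\ref{lem:main} to the strictly contact setting. Because the conformal factors vanish identically, the terms involving $h$ and $f$ drop out of the statement, and it suffices to control the isotopy and its Hamiltonian. The core mechanism is the same: I aim to produce a smooth strictly contact approximant whose Hamiltonian has nearly constant norm $\| \cdot \|$ in $t$, so that smallness in the $L^\oneinfty$-norm automatically transfers to smallness in the $L^\infty$-norm with an arbitrarily small error.

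First, I would unwind the definition to obtain a sequence of smooth strictly contact dynamical systems $(\Phi_{H_i}, H_i, 0)$ with $\dbar (\Phi_{H_i}, \Phi_H) \to 0$ and $\| H - H_i \|_\oneinfty \to 0$, and pass to a rapidly convergent subsequence so that the successive errors are summably small. Next, applying the regularization from section~\ref{sec:regular} inside the strictly contact category, I would perturb each $H_i$ to a basic Hamiltonian whose strictly contact isotopy is regular. Since a time-dependent real constant $c(t)$ is a basic function, the sum $H_i + c_i(t)$ remains basic, and for a generic small choice of $c_i$ the associated vector field $Y_{H_i} + (H_i + c_i)\, R_\alpha$ is nowhere zero. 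Then, following section~\ref{sec:rep}, I would reparameterize by a time-change $\sigma \colon [0,1] \to [0,1]$; since the reparameterized Hamiltonian is $\sigma'(t) \cdot (H_i + c_i)_{\sigma(t)}$, which is still basic, the reparameterization preserves the strictly contact property, and an appropriate $\sigma$ makes the resulting norm nearly constant in $t$.

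To recover the same time-one map as $\Phi_H$, I would splice these regularized reparameterized approximants in a telescoping fashion, using the group structure on $\TSCDS (M,\alpha)$ afforded by Theorem~\ref{thm:topo-gp}. Writing $\Phi_H$ as the limit of $\Phi_{H_1} \circ \prod_{i \ge 1} (\Phi_{H_i}^{-1} \circ \Phi_{H_{i+1}})$, I would replace each factor by a regularized reparameterized strictly contact approximant, rescale it into a short subinterval of $[0,1]$ using the strictly contact analog of Lemma~\ref{lem:rep}, and concatenate. Because strictly contact isotopies compose to strictly contact isotopies, the resulting composite is a continuous strictly contact dynamical system, smooth away from $t = 1$, with $\phi_F^1 = \phi_H^1$ and Hamiltonian close to $H$ in $L^\infty$-norm. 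Depending on whether the first or the second conclusion is desired, one arranges the telescoping so that either the isotopy stays close to $\Phi_H$ throughout, or the deviation from the identity remains uniformly small.

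I expect the main obstacle to lie in the regularization step within the strictly contact category. In the general contact case treated in Lemma~\ref{lem:main} one may perturb by an arbitrary small contact vector field, whereas in the strictly contact case the admissible perturbations are restricted to basic Hamiltonians, which on a manifold with ergodic Reeb flow reduce essentially to time-dependent constants. Verifying that this restricted class of perturbations still suffices to achieve the regularity condition of section~\ref{sec:regular}, and that the subsequent concatenation of reparameterized pieces preserves both $C^0$-proximity to $\Phi_H$ and the $L^\infty$-bound on $F$, is where I anticipate the principal technical work.
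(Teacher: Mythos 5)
Your overall architecture is correct and matches the paper's: the proof of Lemma~\ref{lem:strictly-main} is obtained by running the proof of the Main Lemma~\ref{lem:main} with every piece replaced by its strictly contact analog, and the only place where a genuine modification is needed is the regularization step. However, your handling of that step contains a real gap. You assert that ``for a generic small choice of $c_i$'' the perturbed basic Hamiltonian $H_i + c_i$ yields a regular (never--stationary) strictly contact isotopy. This is false in general. The perturbing Hamiltonian must be $C^\infty$--small and must generate a \emph{loop}, which by equation~(\ref{eqn:loop}) forces the time--dependent constant $f = f(t)$ to have mean zero. Fixing a point $p \in M$, one then wants $H(t,p) - f(t) \ne 0$ for all $t$, but if the smooth periodic function $t \mapsto H(t,p)$ has zeros (or more generally if its mean is close to zero), then for every sufficiently $C^\infty$--small mean--zero $f$ the difference $H(\cdot,p) - f$ again changes sign and hence vanishes somewhere. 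No generic choice rescues this; see Example~\ref{exa:three-torus} for why general basic perturbations are essentially unavailable. The paper therefore proves only the weaker Proposition~\ref{pro:strictly-regular}: regularity can be achieved \emph{away from finitely many times} $t_1,\dots,t_k$, and the set of excluded times can be pushed off any prescribed nowhere--dense set.

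This weaker regularity then forces a nontrivial change in the reparameterization step, which your proposal omits. In the fully regular case (Lemma~\ref{lem:tech-lem}) one reparameterizes by the inverse of the function $\eta$ in~(\ref{eqn:eta}) so that $\| G^\zeta_t \|$ is exactly constant. With finitely many bad times where $\| G_t \| = 0$, the function $\eta$ is not invertible. The paper's Lemma~\ref{lem:strictly-tech-lem} instead chooses $\delta > 0$ so that $\| G_t \| < \int_0^1 \| G_s \|\,ds$ near each $t_i$, reparameterizes with nearly constant speed $A$ (slightly larger than $\int_0^1 \| G_s \|\,ds$) on each complementary subinterval, and crucially keeps the reparameterization function linear with slope one on the windows $[t_i - \delta, t_i + \delta]$. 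This glues to a piecewise $C^1$ reparameterization, which is then smoothed. Without this modification, your argument as written does not produce a well-defined time change, and the $L^\infty$--bound $\| F \|_\infty < \| H \|_\oneinfty + \epsilon$ would not follow. So: keep your overall plan (it is the paper's), but replace the unjustified ``nowhere zero'' claim by Proposition~\ref{pro:strictly-regular}, and carry out the subinterval reparameterization of Lemma~\ref{lem:strictly-tech-lem}.
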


\begin{proof}[Proof of Theorem~\ref{thm:strictly-infty-oneinfty}]
The proof is verbatim the same as the proof of Theorem~\ref{thm:oneinfty-infty} with the reference to Lemma~\ref{lem:main} replaced by a citation of Lemma~\ref{lem:strictly-main}.
\end{proof}

\section{The contact topology} \label{sec:contact-topo}
Recall that the contact topology on the space $\TCDS (M,\alpha)$ of topological contact dynamical systems is the metric topology induced by the contact metric
	\[ d_\alpha ( (\Phi_H, H, h), (\Phi_F, F, f) ) = \dbar (\Phi_H, \Phi_F) + | h - f | + \| H - F \| \]
defined in section~\ref{sec:topo-contact-dyn}.
The time-one evaluation map
\begin{align} \label{eqn:ev}
	\ev_1 \colon \TCDS (M,\alpha) \rightarrow \Homeo (M,\xi), \ (\Phi_H, H, h) \mapsto \phi_H^1
\end{align}
assigns to a topological contact dynamical system $(\Phi_H, H, h)$ the time-one map of the isotopy $\Phi_H$.
This map $\ev_1$ is by definition surjective, and thus induces the usual quotient topology on the set $\Homeo (M,\xi)$, called the \emph{contact topology} on $\Homeo (M,\xi)$.
By definition, the evaluation map (\ref{eqn:ev}) is continuous.
The metric $d_\alpha$ does not necessarily project to a metric on $\Homeo (M,\xi)$, since $d_\alpha$ is neither left nor right invariant, and thus the triangle inequality may be violated, cf.\ \cite{mueller:ghh08}.
See \cite{ms:tcd11} for remarks on the failure of left and right invariance of the contact metric on $\TCDS (M,\alpha)$.

On the other hand, the usual composition of homeomorphisms induces a group structure on the set $\Homeo (M,\xi) \subset \Homeo (M)$, and the time-one evaluation map $\ev_1$ becomes a homomorphism.
Therefore the contact topology on $\Homeo (M,\xi)$ is metrizable.
In fact, both topological spaces $\TCDS (M,\alpha)$ and $\Homeo (M,\xi)$ admit left invariant metrics.
This follows from the next two results.

\begin{thm}[\cite{birkhoff:tg36, kakutani:mtg36, klee:img52}]
A topological group admits a left invariant metric if and only if it is first countable.
\end{thm}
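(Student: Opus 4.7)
The plan is to prove the two directions of the equivalence separately. The forward direction is essentially immediate: if $d$ is a left invariant metric inducing the topology on the topological group $G$, then the open balls $B(e, 1/n)$ of radius $1/n$ about the identity $e$ form a countable neighborhood basis at $e$, and their left translates $g B(e, 1/n) = B(g, 1/n)$ form a countable neighborhood basis at each $g \in G$, so $G$ is first countable.

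For the converse, suppose $G$ is first countable with a nested countable neighborhood basis $\{U_n\}_{n \ge 1}$ at $e$. First I would construct, by induction on $n$, a nested sequence of open symmetric neighborhoods $V_n$ of $e$ with $V_0 = G$, $V_n \subseteq U_n$, and the crucial cubic refinement property $V_{n+1} \cdot V_{n+1} \cdot V_{n+1} \subseteq V_n$. This uses only continuity of multiplication and inversion at $e$: given $V_n$, pick a neighborhood $W$ of $e$ with $W \cdot W \cdot W \subseteq V_n$ and $W \subseteq U_{n+1}$, then set $V_{n+1} = W \cap W^{-1}$.

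Next, define $\phi \colon G \to [0,1]$ by $\phi(e) = 0$ and $\phi(x) = 2^{-n}$ when $x \in V_n \setminus V_{n+1}$; symmetry $\phi(x^{-1}) = \phi(x)$ follows from the $V_n$ being symmetric. Then define
\[ d(x,y) \;=\; \inf \sum_{i=1}^{k} \phi\bigl(x_{i-1}^{-1} x_i\bigr), \]
where the infimum is taken over all finite chains $x = x_0, x_1, \ldots, x_k = y$ in $G$. Left invariance is automatic from $(g x_{i-1})^{-1} (g x_i) = x_{i-1}^{-1} x_i$; symmetry follows by reversing chains and applying $\phi(x^{-1}) = \phi(x)$; and the triangle inequality is immediate by concatenation of chains.

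The main obstacle, and the technical heart of the Birkhoff--Kakutani construction, is to verify that $d$ induces the given topology on $G$. For this I would establish the key estimate $\tfrac{1}{2}\phi(x) \le d(e,x) \le \phi(x)$ for every $x \in G$. The upper bound is furnished by the trivial single-step chain $e \to x$. The lower bound requires an induction on chain length, using the refinement $V_{n+1} \cdot V_{n+1} \cdot V_{n+1} \subseteq V_n$ to show that whenever a chain from $e$ to $x$ has total cost strictly less than $2^{-n-1}$, its endpoint lies in $V_n$. Once this estimate is in place, the $d$-balls at $e$ and the sequence $\{V_n\}$ form equivalent neighborhood bases at $e$, and left invariance transports this equivalence to every point of $G$, so $d$ induces the given topology. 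Finally, $d$ is a genuine metric rather than merely a pseudo-metric because the Hausdorff property of the topological group forces $\bigcap_n V_n = \{e\}$, so $d(e,x) = 0$ implies $x = e$.
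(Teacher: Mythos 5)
Your proof is correct, and it is the standard Birkhoff--Kakutani construction found in the references cited by the theorem; the paper itself does not reprove this classical result but merely cites \cite{birkhoff:tg36, kakutani:mtg36, klee:img52} and remarks that a topological group is assumed Hausdorff (which you correctly use to obtain $\bigcap_n V_n = \{e\}$ and hence non-degeneracy). All the key steps are in place: the easy forward direction via translated metric balls, the cubic refinement $V_{n+1}V_{n+1}V_{n+1} \subseteq V_n$ from continuity of multiplication and inversion, the chain infimum definition of $d$, and the two-sided estimate $\tfrac{1}{2}\phi(x) \le d(e,x) \le \phi(x)$ whose lower bound is established by the chain-splitting induction you sketch and which makes the $d$-balls at $e$ cofinal with the $V_n$.
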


Note that in our notation a topological group is assumed to be Hausdorff.
See the references in the theorem for explicit constructions of such left invariant metrics.

\begin{thm} \label{thm:ev-topo-gp}
The projection map $ev_1$ induces the structure of a first countable topological group on the topological space $\Homeo (M,\xi)$ with the contact topology.
\end{thm}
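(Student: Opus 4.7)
The plan is to realize $\Homeo(M,\xi)$ as the quotient $\TCDS(M,\alpha)/\ker(\ev_1)$ of the topological group $\TCDS(M,\alpha)$ (cf.\ Theorem~\ref{thm:topo-gp}) by a closed normal subgroup, and then invoke the standard fact that the quotient of a topological group by a closed normal subgroup is a Hausdorff topological group which inherits first countability from the ambient group.

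First I would verify that $\ev_1$ is a surjective group homomorphism: surjectivity holds by definition of $\Homeo(M,\xi)$, and the group law on $\TCDS(M,\alpha)$ recorded in Theorem~\ref{thm:topo-gp} is designed precisely so that composition of triples $(\Phi_H, H, h) \circ (\Phi_F, F, f)$ corresponds to composition $\phi_H^1 \circ \phi_F^1$ of the time-one maps. Next I would show that $K := \ker(\ev_1) = \{ (\Phi_H, H, h) \in \TCDS(M,\alpha) : \phi_H^1 = \id \}$ is a closed normal subgroup. Normality is immediate from the conjugation formula implicit in Theorem~\ref{thm:topo-gp}, since the time-one map of a conjugate $(\Psi, G, g)^{-1} \circ (\Phi_H, H, h) \circ (\Psi, G, g)$ is $(\psi^1)^{-1} \circ \id \circ \psi^1 = \id$. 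Closedness uses the fact that the $\dbar$ summand in $d_\alpha$ dominates the $C^0$-distance of time-one maps: if a sequence $(\Phi_{H_i}, H_i, h_i) \in K$ converges to $(\Phi_H, H, h)$ with respect to $d_\alpha$, then $\phi_{H_i}^1 \to \phi_H^1$ uniformly, and $\phi_{H_i}^1 = \id$ forces $\phi_H^1 = \id$.

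Since the contact topology on $\Homeo(M,\xi)$ is, by definition, the quotient topology induced by $\ev_1$, the identification $\Homeo(M,\xi) \cong \TCDS(M,\alpha)/K$ and the cited standard result yield at once that $\Homeo(M,\xi)$ is a Hausdorff topological group with $\ev_1$ a continuous open homomorphism; in particular the induced multiplication and inversion on $\Homeo(M,\xi)$ are continuous. For first countability, note that $\TCDS(M,\alpha)$ is metrizable via $d_\alpha$ and hence first countable. If $\{ U_n \}$ is a countable neighborhood base at $(\id, 0, 0)$, then $\{ \ev_1(U_n) \}$ is a countable neighborhood base at $\id \in \Homeo(M,\xi)$: each $\ev_1(U_n)$ is open since $\ev_1^{-1}(\ev_1(U_n)) = U_n \cdot K$ is a union of right translates of $U_n$, and any open neighborhood $V$ of $\id$ in the quotient has open preimage $\ev_1^{-1}(V)$ containing $K$, hence containing some $U_n$ and therefore $\ev_1(U_n) \subset V$. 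Homogeneity of the topological group structure then provides countable neighborhood bases at every point.

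The main obstacle is essentially bookkeeping: closedness of $K$ with respect to $d_\alpha$ (which is where the $\dbar$-component of the contact metric is crucial) and the transfer of the Hausdorff property to the quotient (equivalently, again, closedness of $K$). Once the closed normal subgroup picture is installed, the conclusion is a direct application of standard facts about quotients of first countable topological groups, and combining the result with the Birkhoff--Kakutani theorem cited in the text yields the existence of a left invariant metric on $\Homeo(M,\xi)$.
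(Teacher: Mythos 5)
Your proof is correct and follows essentially the same route as the paper: both show that $\ev_1$ is an open quotient homomorphism by computing $\ev_1^{-1}(\ev_1(\U))$ as a union of translates of $\U$ by the kernel of $\ev_1$, and then transfer first countability along the open map. The one point you make explicit that the paper leaves implicit is the Hausdorff property via closedness of $\ker(\ev_1)$, which, as you note, follows from the $\dbar$-summand of the contact metric dominating the $C^0$-distance of time-one maps.
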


\begin{cor}
The space $\Homeo (M,\xi)$ equipped with the contact topology admits a left invariant metric that generates its topology.
\end{cor}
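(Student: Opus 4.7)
The plan is to derive the corollary as a direct combination of the two preceding theorems, with essentially no additional work required. By Theorem \ref{thm:ev-topo-gp}, the set $\Homeo(M,\xi)$ endowed with the contact topology induced by $\ev_1$ is a first countable topological group, where the group law is the one inherited from composition of homeomorphisms (under which $\ev_1$ is a surjective homomorphism) and which, by the paper's convention, is Hausdorff. Once this is in hand, the Birkhoff--Kakutani--Klee theorem quoted above immediately produces a left invariant metric on $\Homeo(M,\xi)$ generating its topology.

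If I were to write this out, I would first recall that the contact topology on $\Homeo(M,\xi)$ is the quotient topology for $\ev_1\colon \TCDS(M,\alpha)\to \Homeo(M,\xi)$, so a neighborhood basis at the identity in $\Homeo(M,\xi)$ consists of the images under $\ev_1$ of saturated open neighborhoods of $(\id,0,0)$ in $\TCDS(M,\alpha)$. Then I would appeal to Theorem \ref{thm:ev-topo-gp} to assert first countability at the identity, and propagate it to every point by left translation (which is a homeomorphism of the topological group $\Homeo(M,\xi)$). Finally I would invoke the Birkhoff--Kakutani--Klee theorem and conclude.

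The one potentially subtle point is that quotient maps do not in general preserve first countability, so first countability of $\Homeo(M,\xi)$ does not follow automatically from metrizability of $\TCDS(M,\alpha)$; however, this issue is precisely what is handled in Theorem \ref{thm:ev-topo-gp}, which we may invoke as stated. Consequently there is no real obstacle here: the proof is a two-line citation of Theorem \ref{thm:ev-topo-gp} followed by the Birkhoff--Kakutani--Klee theorem.
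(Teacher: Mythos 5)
Your proof is correct and follows exactly the intended route: the corollary is an immediate consequence of Theorem~\ref{thm:ev-topo-gp} combined with the Birkhoff--Kakutani--Klee theorem, which is precisely how the paper presents it (the paper gives no separate proof, treating it as a direct consequence). Your observation that first countability is not automatic under quotient maps and is handled by Theorem~\ref{thm:ev-topo-gp} is a correct and useful clarification.
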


\begin{proof}[Proof of Theorem~\ref{thm:ev-topo-gp}]
By Theorem~\ref{thm:topo-gp}, the space $\TCDS (M,\alpha)$ forms a topological group with the contact topology induced by the contact metric $d_\alpha$.
In particular, the contact topology on $\TCDS (M,\alpha)$ is first countable, and left and right translations in $\TCDS (M,\alpha)$ are continuous.
As a consequence, the projection map $ev_1$ is also open.
Indeed, let $\U$ be an open set in $\TCDS (M,\alpha)$, then
	\[ ev^{-1} \left( \ev_1 (\U) \right) = \bigcup \, (\Phi_H, H, h) \circ \U = \bigcup \, \U \circ (\Phi_H, H, h) \]
is open, where the unions are taken over all topological contact dynamical systems $(\Phi_H, H , h)$ with time-one map the identity.
Therefore $\ev_1 (\U)$ is open by definition of the quotient topology.
That makes $\Homeo (M,\xi)$ a topological group with respect to the contact topology, and moreover, the projection of a first countable neighborhood basis at an element $(\Phi, H, h)$ of $\TCDS (M,\alpha)$ defines a first countable neighborhood basis of the contact topology on $\Homeo (M,\xi)$ at $\phi_H^1$.
\end{proof}

In fact, the same proofs verify that the two topological groups of topological Hamiltonian dynamical systems and of Hamiltonian homeomorphisms with the Hamiltonian topologies \cite{mueller:ghh07, mueller:ghh08, mueller:ghl08} admit left invariant metrics.

\begin{thm}
The group of contact homeomorphisms with the contact topology is path-connected and locally-path-connected.
Any two contact homeomorphisms can be connected inside $\Homeo (M,\xi)$ by a topological contact isotopy.
\end{thm}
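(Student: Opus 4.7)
The central tool is the reparameterization from Lemma~\ref{lem:rep}: for $(\Phi_H, H, h) \in \TCDS(M,\alpha)$, the assignment $s \mapsto (\Phi_{H^s}, H^s, h^s)$ defines a path in $\TCDS(M,\alpha)$ from $(\id, 0, 0)$ to $(\Phi_H, H, h)$. The plan is to show this path is continuous with respect to the contact metric $d_\alpha$, so that its image under $\ev_1$ becomes a continuous path in $\Homeo(M,\xi)$ from $\id$ to $\phi_H^1$. Continuity of $s \mapsto \Phi_{H^s}$ in the $C^0$-metric and of $s \mapsto h^s$ in the maximum norm follow at once from uniform continuity of $\Phi_H$ on $[0,1]$ and of $h$ on $[0,1] \times M$, using $\dbar(\Phi_{H^s}, \Phi_{H^{s'}}) = \max_t \dbar(\phi_H^{st}, \phi_H^{s't})$ and $|h^s - h^{s'}| = \max_{t,x} |h(st,x) - h(s't,x)|$.

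The main obstacle is controlling $\|H^s - H^{s'}\|_\oneinfty$ as $s' \to s$, since a priori $H$ is only $L^1$ in the time variable. My plan here is a density argument: the substitution $u = st$ yields the elementary inequality $\|H^s - \tilde H^s\|_\oneinfty = \int_0^s \|H_u - \tilde H_u\|\, du \le \|H - \tilde H\|_\oneinfty$ for any comparison time-dependent function $\tilde H$. Choosing a smooth approximation $\tilde H$ with $\|H - \tilde H\|_\oneinfty < \epsilon/3$, uniform continuity of $\tilde H$ on $[0,1] \times M$ then gives $\|\tilde H^s - \tilde H^{s'}\|_\oneinfty < \epsilon/3$ once $|s - s'|$ is sufficiently small, and the triangle inequality produces $\|H^s - H^{s'}\|_\oneinfty < \epsilon$.

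Granting this continuity, path-connectedness of $\Homeo(M,\xi)$ in the contact topology is immediate. For the claim that two given contact homeomorphisms $\phi = \phi_F^1$ and $\psi = \phi_G^1$ can be joined by a topological contact isotopy lying inside $\Homeo(M,\xi)$, I invoke the topological group structure from Theorem~\ref{thm:topo-gp} to form $(\Phi_F, F, f)^{-1} \circ (\Phi_G, G, g) \in \TCDS(M,\alpha)$, whose underlying isotopy $\{\eta_t\}$ runs from $\id$ to $\phi^{-1} \psi$; the isotopy $\{\phi \circ \eta_t\}$ then furnishes the desired topological contact isotopy from $\phi$ to $\psi$ in the extended sense described at the end of Section~\ref{sec:topo-auto}, and lies inside $\Homeo(M,\xi)$ because each $\phi \circ \eta_t$ is a composition of contact homeomorphisms.

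Finally, for local path-connectedness, the sets $V_\delta = \ev_1(B_\delta(\id, 0, 0))$, where $B_\delta$ denotes the open $d_\alpha$-ball, form a neighborhood basis of $\id$ by the openness of $\ev_1$ established in the proof of Theorem~\ref{thm:ev-topo-gp}. For any $\psi \in V_\delta$ with preimage $(\Phi_F, F, f) \in B_\delta(\id, 0, 0)$, the reparameterization $(\Phi_{F^s}, F^s, f^s)$ remains in $B_\delta(\id, 0, 0)$ because $\dbar(\Phi_{F^s}, \id) \le \dbar(\Phi_F, \id)$, $|f^s| \le |f|$, and $\|F^s\|_\oneinfty = \int_0^s \|F_u\|\, du \le \|F\|_\oneinfty$; projecting yields a path in $V_\delta$ from $\id$ to $\psi$. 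Hence $V_\delta$ is path-connected, and left translation in the topological group $\Homeo(M,\xi)$ produces a path-connected neighborhood basis at every point.
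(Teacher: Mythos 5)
Your proposal is correct and follows essentially the same strategy as the paper: continuity of $s \mapsto (\Phi_{H^s}, H^s, h^s)$ established by approximating with smooth systems (the key substitution $\|H^s - \tilde H^s\|_\oneinfty \le \|H - \tilde H\|_\oneinfty$ is exactly the reduction the paper makes), and the monotonicity of the reparameterization under $d_\alpha$-balls around $(\id,0,0)$ for local path-connectedness. Your handling of the final claim via the group operation $(\Phi_F, F, f)^{-1} \circ (\Phi_G, G, g)$ and left translation by $\phi$ is a mildly cleaner packaging than the paper's boundary-flat reparameterization followed by concatenation, but it amounts to the same argument.
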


\begin{proof}
To prove path-connectedness, it suffices to show every $\phi \in \Homeo (M,\xi)$ can be connected to the identity by a path $\ell \colon [0,1] \to \Homeo (M,\xi)$ that is continuous with respect to the contact topology, and with $\ell (0) = \id$ and $\ell (1) = \phi$.
By definition, there exists a topological contact dynamical system $(\Phi_H, H, h)$ with time-one map $\phi_H^1 = \phi$.
Define $\ell (s) = \phi_H^s$.
By Lemma~\ref{lem:rep}, this path factors through the group of topological contact dynamical systems via the evaluation map $\ev_1$, and it suffices to show that the function $s \mapsto (\Phi_{H^s}, H^s, h^s)$ is continuous with respect to the contact metric on $\TCDS (M,\alpha)$.

Let $\epsilon > 0$.
It clearly suffices to show that $\dbar (\Phi_{H^r}, \Phi_{H^s}) < 3 \epsilon$, $| h^r - h^s | < 3 \epsilon$, and $\| H^r - H^s \| < 3 \epsilon$, provided $| r - s |$ is sufficiently small.
There exists a sequence $(\Phi_{H_i}, H_i, h_i)$ of smooth contact dynamical systems, such that $\dbar (\Phi_H, \Phi_{H_i}) < \epsilon$, $| h - h_i | < \epsilon$, and $\| H - H_i \| < \epsilon$ for $i$ sufficiently large.
Fix such an index $i$.
Then
\begin{align*}
	\dbar (\Phi_{H^r}, \Phi_{H^s}) & \le \dbar (\Phi_{H^r}, \Phi_{H_i^r}) + \dbar (\Phi_{H_i^r}, \Phi_{H_i^s}) + \dbar (\Phi_{H_i^s}, \Phi_{H^s}) \\
	& \le \dbar (\Phi_H, \Phi_{H_i}) + \dbar (\Phi_{H_i^r}, \Phi_{H_i^s}) + \dbar (\Phi_{H_i}, \Phi_H) \\
	& < \dbar (\Phi_{H_i^r}, \Phi_{H_i^s}) + 2 \epsilon,
\end{align*}
and similarly for the contact Hamiltonians and conformal factors.
It only remains to show that $\dbar (\Phi_{H_i^r}, \Phi_{H_i^s}) < \epsilon$, $| h_i^r - h_i^s | < \epsilon$, and $\| H_i^r - H_i^s \| < \epsilon$, provided $| r - s |$ is sufficiently small, where the index $i$ is \emph{fixed}.

Since the maps $(t, x) \mapsto \phi_{H_i}^t (x)$ and $(t, x) \mapsto (\phi_{H_i}^t)^{-1} (x)$ are continuous, the first inequality is obvious.
Moreover, $h_i^r (t, x) = h_i (r t, x)$, and $h_i$ is continuous, so that $| h_i (r t, x) - h_i (s t, x) | < \epsilon$, provided $| r - s |$ is sufficiently small.
The function $H_i$ is also Lipschitz continuous, so that there exists a constant $C$ that satisfies
	\[ \| H_i^r - H_i^s \|_\oneinfty \le \| H_i^r - H_i^s \|_\infty \le C | r - s | < \epsilon, \]
provided $| r - s |$ is sufficiently small.
See Lemma~\ref{lem:zeta-est} below for an explicit constant $C$.
That proves $\ell$ is continuous with respect to the contact topology, and the proof of path-connectedness is complete.

It suffices to prove local path-connectedness at the identity.
If $(\Phi_H, H, h)$ is a topological contact dynamical system within $\epsilon$-distance of $(\id, 0, 0)$, then so is $(\Phi_{H^s}, H^s, h^s)$ for every $s \in [0,1]$.
Thus the $d_\alpha$-metric balls of radius $\epsilon > 0$ that are centered at the identity $(\id, 0, 0) \in \TCDS (M,\alpha)$ define a basis of path-connected neighborhoods.
Their projections via the time-one evaluation map $\ev_1$ form a basis of path-connected neighborhoods at the identity in $\Homeo (M,\xi)$.
The proof of local path-connectedness is now verbatim the same as for path-connectedness.

Note that the path $\ell$ chosen above is in fact a topological contact isotopy.
By replacing $H$ by $H^\zeta$, and $H_i$ by $H_i^\zeta$, for a \emph{fixed} reparameterization function $\zeta \colon [0,1] \to [0,1]$, with $\zeta = 0$ near $t = 0$ and $\zeta = 1$ near $t = 1$, any $\phi \in \Homeo (M,\xi)$ can be connected to the identity by a boundary flat topological contact isotopy.
The concatenation of two such topological contact isotopies is again a topological contact isotopy, proving the final statement of the theorem.
\end{proof}

The subspace topology on the subset of topological strictly contact dynamical systems $\TSCDS (M,\alpha) \subset \TCDS (M,\alpha)$ is also called the contact topology, and this topology is induced by the restriction of the contact metric $d_\alpha$ to $\TSCDS (M,\alpha)$.
The time-one evaluation map
	\[ \ev_1 \colon \TSCDS (M,\alpha) \rightarrow \Homeo (M,\alpha), \ (\Phi_H, H) \mapsto \phi_H^1 \]
is by definition surjective, and induces a quotient topology on $\Homeo (M,\alpha)$, which is also called the contact topology.
Again the evaluation map $\ev_1$ is a continuous homomorphism, and the topological spaces $\TSCDS (M,\alpha)$ and $\Homeo (M,\alpha)$ admit left invariant metrics.
The proofs of the following statements are the same as in the case of contact homeomorphisms.

\begin{thm}
The projection map $ev_1$ induces the structure of a first countable topological group on the topological space $\Homeo (M,\alpha)$ with the contact topology, and $\Homeo (M,\alpha)$ is a topological subgroup of $\Homeo (M,\xi)$.
\end{thm}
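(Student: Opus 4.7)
The plan is to transcribe the proof of Theorem~\ref{thm:ev-topo-gp} with $\TCDS(M,\alpha)$ replaced by $\TSCDS(M,\alpha)$, and then to upgrade the inclusion to a topological embedding of groups using the universal property of the quotient topology. By Theorem~\ref{thm:topo-gp}, $\TSCDS(M,\alpha)$ is a topological subgroup of $\TCDS(M,\alpha)$, hence itself a topological group, and the restriction of the contact metric $d_\alpha$ induces a first countable (metric) topology on it. In particular, left and right translations by a fixed element of $\TSCDS(M,\alpha)$ are continuous self-homeomorphisms.

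Next I would verify that the time-one evaluation map $\ev_1 \colon \TSCDS(M,\alpha) \to \Homeo(M,\alpha)$ is open. For any open $\U \subset \TSCDS(M,\alpha)$, the same identity as in the proof of Theorem~\ref{thm:ev-topo-gp} reads
\[ \ev_1^{-1}(\ev_1(\U)) \;=\; \bigcup \, (\Phi_H, H) \circ \U \;=\; \bigcup \, \U \circ (\Phi_H, H), \]
where the union is taken over all topological strictly contact dynamical systems $(\Phi_H, H) \in \TSCDS(M,\alpha)$ whose time-one map is the identity (these form a subgroup of $\TSCDS(M,\alpha)$, so the saturation stays inside $\TSCDS$, which is the only point to check). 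Each translate is open by continuity of translation in $\TSCDS(M,\alpha)$, so $\ev_1(\U)$ is open in the quotient topology on $\Homeo(M,\alpha)$. Consequently multiplication and inversion in $\Homeo(M,\alpha)$ descend to continuous operations from those on $\TSCDS(M,\alpha)$, making $\Homeo(M,\alpha)$ a topological group. First countability transfers: the image under $\ev_1$ of a countable neighborhood basis at the identity of $\TSCDS(M,\alpha)$ is a countable neighborhood basis at $\id \in \Homeo(M,\alpha)$, because $\ev_1$ is both continuous and open.

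For the subgroup statement, the set-theoretic inclusion $\iota \colon \Homeo(M,\alpha) \hookrightarrow \Homeo(M,\xi)$ is a group homomorphism by the inclusion (\ref{eqn:strictly-set-inclusions}). To see that $\iota$ is continuous, note that the inclusion $\jmath \colon \TSCDS(M,\alpha) \hookrightarrow \TCDS(M,\alpha)$ is continuous (the left-hand side carries the subspace topology), and the square $\ev_1 \circ \jmath = \iota \circ \ev_1$ commutes. Since $\ev_1 \colon \TSCDS(M,\alpha) \to \Homeo(M,\alpha)$ is a quotient map, the universal property of the quotient topology implies that $\iota$ is continuous. Together with the fact that $\ev_1$ is open on both sides, one checks that $\iota$ is a topological embedding onto its image, so $\Homeo(M,\alpha)$ sits inside $\Homeo(M,\xi)$ as a topological subgroup.

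There is no real obstacle here; the only point that requires any attention is the openness of $\ev_1$, and specifically the observation that the elements $(\Phi_H, H)$ indexing the union in the saturation identity can be taken to lie in $\TSCDS(M,\alpha)$, which follows because the kernel of $\ev_1$ on $\TSCDS(M,\alpha)$ is a subgroup of $\TSCDS(M,\alpha)$. Everything else is a verbatim copy of the arguments in Theorem~\ref{thm:ev-topo-gp} and the corollary that follows it.
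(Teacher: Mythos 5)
Your argument for the first part — openness of $\ev_1$ on $\TSCDS(M,\alpha)$, the resulting topological group structure on $\Homeo(M,\alpha)$, and first countability — is a correct transcription of the proof of Theorem~\ref{thm:ev-topo-gp}, which is exactly what the paper intends when it says "the proofs of the following statements are the same as in the case of contact homeomorphisms." Your explicit remark that the elements indexing the saturation identity may be taken in $\TSCDS(M,\alpha)$, because the kernel of $\ev_1$ restricted to $\TSCDS(M,\alpha)$ is a subgroup of $\TSCDS(M,\alpha)$, is the right point to flag, and it is correct.

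For the subgroup statement, the universal-property argument cleanly gives continuity of $\iota\colon \Homeo(M,\alpha)\hookrightarrow\Homeo(M,\xi)$, since $\ev_1\circ\jmath$ is continuous and constant on the fibers of the quotient map $\TSCDS(M,\alpha)\to\Homeo(M,\alpha)$. But your final assertion that $\iota$ is a topological embedding is left at "one checks" and does not follow from openness of the two evaluation maps alone. The difficulty is that $\TSCDS(M,\alpha)$ is not a saturated subspace of $\TCDS(M,\alpha)$ with respect to the evaluation map onto $\Homeo(M,\xi)$: a strictly contact homeomorphism can be the time-one map of topological contact dynamical systems that are not strictly contact. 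Hence the restriction of an open quotient map to this subspace need not remain a quotient map onto its image, and to obtain an embedding you would need to show that a strictly contact homeomorphism admitting a $\TCDS$-representative near $(\id,0,0)$ also admits a $\TSCDS$-representative near $(\id,0,0)$ — a comparison of $d_\alpha$-small representatives across the two classes that your argument does not supply. Under the weaker reading of "topological subgroup" (a subgroup carrying a topological group structure with continuous injective inclusion) your proof is complete; under the stronger reading (the intrinsic topology agrees with the subspace topology) there is a genuine gap at this last step.
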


\begin{cor}
The space $\Homeo (M,\alpha)$ equipped with the contact topology admits a left invariant metric that generates its topology.
\end{cor}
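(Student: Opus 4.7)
The plan is to deduce this corollary immediately from the preceding theorem combined with the Birkhoff--Kakutani--Klee theorem cited earlier in the section (the theorem of \cite{birkhoff:tg36, kakutani:mtg36, klee:img52}). Specifically, the preceding theorem asserts that $\Homeo(M,\alpha)$ with the contact topology is a first countable topological group, and the Birkhoff--Kakutani theorem says that any first countable (Hausdorff) topological group admits a left invariant metric compatible with its topology. So the only content to verify is that the hypotheses of the two cited results are truly met; no new construction is required.

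First I would invoke the preceding theorem to record that $\Homeo(M,\alpha)$, topologized as the quotient of $\TSCDS(M,\alpha)$ under $\ev_1$, is a topological group, and that a countable neighborhood basis at the identity is obtained by projecting the $d_\alpha$-metric balls in $\TSCDS(M,\alpha)$ of radius $1/n$ centered at $(\id,0)$ via $\ev_1$. This gives first countability at the identity, which by left translation in a topological group propagates to first countability at every point. Second, I would appeal to the Birkhoff--Kakutani--Klee theorem stated above in the section to conclude that $\Homeo(M,\alpha)$ admits a left invariant metric generating its topology.

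The only mild subtlety, and the one place I would be careful, is verifying the Hausdorff condition implicit in ``topological group'' as used in the cited theorem: one needs $\{\id\}$ to be closed in the contact topology on $\Homeo(M,\alpha)$. This is inherited from the analogous property for $\Homeo(M,\xi)$ (which is a topological subgroup, as asserted in the preceding theorem) via the $C^0$-rigidity results for strictly contact homeomorphisms: if $\phi_H^1=\id$ in the contact topology, then the Uniqueness Theorem~\ref{thm:unique} together with the defining sequences forces the system to lie in the identity coset. Once this separation property is recorded, the Birkhoff--Kakutani construction applies verbatim and the proof is complete.

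Thus the proof reduces to a one-line citation: by the preceding theorem $\Homeo(M,\alpha)$ is a first countable topological group under the contact topology, and hence by the Birkhoff--Kakutani--Klee theorem it admits a left invariant metric that generates this topology. I would write the proof accordingly, with perhaps a parenthetical remark that the same argument, applied to the quotient of $\TCDS(M,\alpha)$ by $\ev_1$, recovers the preceding corollary for $\Homeo(M,\xi)$ and places both results on identical footing.
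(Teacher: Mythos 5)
Your proof is correct and matches the paper's intended (implicit) argument exactly: the corollary is an immediate consequence of the preceding theorem, which establishes that $\Homeo(M,\alpha)$ with the contact topology is a first countable topological group, together with the Birkhoff--Kakutani--Klee theorem cited earlier in the section. Your care about the Hausdorff condition is appropriate but already covered by the paper's convention that topological groups are assumed Hausdorff.
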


\begin{thm}
The group $\Homeo (M,\alpha)$ of strictly contact homeomorphism with the contact topology is path-connected and locally path-connected.
Any two strictly contact homeomorphisms can be connected inside $\Homeo (M,\alpha)$ by a topological strictly contact isotopy.
\end{thm}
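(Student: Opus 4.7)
The plan is to follow verbatim the proof of the preceding theorem for $\Homeo(M,\xi)$, tracking at each step the fact that topological strictly contact dynamical systems have vanishing conformal factor and basic Hamiltonian, so that every construction lands back in $\TSCDS(M,\alpha)$. The three conclusions will be derived in order: path-connectedness, local path-connectedness, and connectedness by a topological strictly contact isotopy.

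For path-connectedness I would fix $\phi \in \Homeo(M,\alpha)$, choose a topological strictly contact dynamical system $(\Phi_H, H)$ with $\phi_H^1 = \phi$, and define $\ell \colon [0,1] \to \Homeo(M,\alpha)$ by $\ell(s) = \phi_H^s$. Applying Lemma~\ref{lem:rep} with $h \equiv 0$ yields $h^s \equiv 0$, and Reeb-invariance of $H_t$ is preserved by the rescaling $H^s(t,x) = s H(st,x)$, so the path factors through $\TSCDS(M,\alpha)$ via $\ev_1$. Continuity of $s \mapsto (\Phi_{H^s}, H^s)$ with respect to the restriction of $d_\alpha$ to $\TSCDS(M,\alpha)$ is then established by the same $\epsilon/3$ argument as in the contact case: given $\epsilon > 0$, approximate $(\Phi_H, H)$ by a smooth strictly contact dynamical system $(\Phi_{H_i}, H_i)$ with $\dbar(\Phi_H, \Phi_{H_i}) < \epsilon$ and $\|H - H_i\|_\oneinfty < \epsilon$, fix such an $i$, and use joint continuity of $(t,x) \mapsto \phi_{H_i}^t(x)$ together with the Lipschitz continuity of $H_i$ in $t$ (Lemma~\ref{lem:zeta-est}) to make $\dbar(\Phi_{H_i^r}, \Phi_{H_i^s})$ and $\|H_i^r - H_i^s\|_\oneinfty$ small whenever $|r - s|$ is small.

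For local path-connectedness I would observe that if $(\Phi_H, H)$ lies within $d_\alpha$-distance $\epsilon$ of $(\id, 0)$, then so does $(\Phi_{H^s}, H^s)$ for every $s \in [0,1]$ by Lemma~\ref{lem:rep}. Hence the $d_\alpha$-balls around $(\id, 0)$ in $\TSCDS(M,\alpha)$ form a basis of path-connected neighborhoods, and their images under $\ev_1$ yield a basis of path-connected neighborhoods of the identity in $\Homeo(M,\alpha)$ with the contact topology. Left translation by elements of the topological group $\Homeo(M,\alpha)$ then transports this basis to any point.

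For the final assertion, I would reparameterize $H$ and each smooth approximant $H_i$ by a fixed boundary-flat cutoff $\zeta \colon [0,1] \to [0,1]$ with $\zeta \equiv 0$ near $0$ and $\zeta \equiv 1$ near $1$, obtaining paths $s \mapsto \phi_H^{\zeta(s)}$ that are genuine topological strictly contact isotopies; basicness is preserved because the reparameterized Hamiltonian $H^\zeta_t = \zeta'(t) H_{\zeta(t)}$ is a pointwise multiple of $H_{\zeta(t)}$, and the conformal factor remains zero. Concatenation of two such boundary-flat isotopies remains a topological strictly contact isotopy, yielding the last statement. The only real obstacle is bookkeeping: one must verify that each of the operations---rescaling $H \mapsto H^s$, reparameterizing $H \mapsto H^\zeta$, and concatenation---preserves both $h \equiv 0$ and Reeb-invariance of the Hamiltonian, which is immediate from the explicit formulas but should be pointed out explicitly.
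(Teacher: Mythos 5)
Your proposal is correct and follows exactly the paper's route: the paper states outright that the proofs are verbatim the same as in the contact case, and your step-by-step verification that $h^s\equiv 0$, that $H^s$ and $H^\zeta$ remain basic, and that the $\ev_1$-image of $d_\alpha$-balls in $\TSCDS(M,\alpha)$ gives a path-connected neighborhood basis is precisely the bookkeeping needed to carry the earlier argument over.
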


The proofs of the corresponding results in this section for continuous contact and strictly contact dynamical systems are similar and thus omitted.

\section{Regularization} \label{sec:regular}
A contact isotopy $\Phi_H = \{ \phi_H^t \}$ is called \emph{regular} if for every time $t$, its generating Hamiltonian $H_t$ is not identically zero.

\begin{pro} \label{pro:regular}
Let $\Phi_H$ be a contact isotopy generated by a smooth Hamiltonian $H \colon S^1 \times M \to \R$.
Then there exists an arbitrarily small (in the $C^\infty$-topology) contact loop $\Phi_F$ generated by a smooth Hamiltonian $F \colon S^1 \times M \to \R$, such that the isotopy $\Phi_F^{-1} \circ \Phi_H$ is regular.
\end{pro}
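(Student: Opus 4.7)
By Lemma~\ref{lem:contact-ham}, the Hamiltonian of $\Phi_F^{-1}\circ\Phi_H$ at time $t$ is $e^{-f_t}(H_t-F_t)\circ\phi_F^t$. Since $\phi_F^t$ is a diffeomorphism, this vanishes identically on $M$ if and only if $F_t=H_t$ as functions on $M$. So the proposition reduces to constructing, for every $\epsilon>0$, a $C^\infty$-small contact loop $\Phi_F$ such that $F_t\neq H_t$ in $C^\infty(M)$ for every $t\in S^1$. Let $\mathcal{Z}=\{t\in S^1:H_t\equiv 0\}$, a closed subset of $S^1$.

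My candidate is built from two nontrivial $g_1,g_2\in C^\infty(M)$ with disjoint compact supports. Because $d\alpha|_\xi$ is nondegenerate, each contact vector field $X_{g_i}$ is supported in $\mathrm{supp}(g_i)$; hence $X_{g_1}$ and $X_{g_2}$ commute. For $\epsilon>0$, set
\[
F^\epsilon(t,x)=\epsilon\bigl(\cos(2\pi t)\,g_1(x)+\sin(2\pi t)\,g_2(x)\bigr).
\]
Linearity of $g\mapsto X_g$ and commutativity then give
\[
\phi_{F^\epsilon}^t=\phi_{X_{g_1}}^{\epsilon\sin(2\pi t)/(2\pi)}\circ\phi_{X_{g_2}}^{\epsilon(1-\cos(2\pi t))/(2\pi)},
\]
which is the identity at $t=1$; thus $\Phi_{F^\epsilon}$ is a contact loop of $C^\infty$-size $O(\epsilon)$. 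For $t\in\mathcal{Z}$, $H_t\equiv 0$ while $F^\epsilon_t\not\equiv 0$ because $\cos^2(2\pi t)+\sin^2(2\pi t)=1$ forces at least one coefficient to be nonzero and the corresponding $g_i$ is nontrivial, so $F^\epsilon_t\neq H_t$. For $t\notin\mathcal{Z}$ such that $\{x:H_t(x)\neq 0\}$ is not contained in $K:=\mathrm{supp}(g_1)\cup\mathrm{supp}(g_2)$, picking $x\notin K$ with $H_t(x)\neq 0$ gives $F^\epsilon_t(x)=0\neq H_t(x)$ and hence $F^\epsilon_t\neq H_t$.

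The main obstacle is the remaining possibility that for some $t_0\notin\mathcal{Z}$ the entire nonzero set of $H_{t_0}$ is contained in $K$ and $H_{t_0}$ happens to equal $F^\epsilon_{t_0}$ on $K$. To defeat this I would promote the single pair $(g_1,g_2)$ to a multi-parameter family of small contact loops: add further bumps $g_3,\dots,g_N$ with pairwise disjoint supports (so all $X_{g_i}$ mutually commute), use trigonometric coefficients at several frequencies, and treat the amplitudes as parameters $s$ in a finite-dimensional ball $B$. Each $F^{(s)}$ generates a contact loop by the same commutativity argument. The bad locus $\{s\in B:\exists t,\ F^{(s)}_t=H_t\}$ is the projection to $B$ of a subset of $S^1\times B$ which at each $t$ imposes an infinite-codimension condition in $C^\infty(M)$; a standard transversality (or Baire category) argument in $B$ then produces a generic $s$ for which $F^{(s)}_t\neq H_t$ for every $t\in S^1$ simultaneously, and scaling yields arbitrarily small such loops.
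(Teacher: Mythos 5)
Your construction of the loop is correct and differs from the paper's. You build the loop from functions $g_1,g_2$ with disjoint compact supports, using the fact (which you verify correctly: if $H\equiv 0$ near a point, then $\iota(X_H)\alpha=0$ and $\iota(X_H)d\alpha=0$ force $X_H=0$ by nondegeneracy of $d\alpha|_\xi$) that $X_{g_1}$ and $X_{g_2}$ commute, so that the time-dependent flow splits as a composition of two commuting one-parameter flows and closes up at $t=1$. The paper instead constructs a $2k$-parameter variation of the constant loop directly (Step~1, via Lemma~\ref{lem:two-par-fam}) and then tracks the \emph{differential} $d(H_t-F_{t,\epsilon})(p)$ of the candidate Hamiltonian at a single point $p$, arranging that the map $I\colon S^1\times\R^{2k}\to E^{2k}$ is a submersion near $\{\epsilon=0\}$ so that $I^{-1}(0)$ is $1$-dimensional and projects to a nowhere dense subset of the parameter space. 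Both strategies have the same logical skeleton: exhibit a finite-parameter family of loops, show the ``bad'' parameter set has measure zero, conclude by genericity. You have correctly identified the one obstruction to the naive two-bump construction, and your resolution (increase the number of parameters) is the right idea.

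The gap is in the final step. The assertion that the bad locus ``at each $t$ imposes an infinite-codimension condition in $C^\infty(M)$'' is not a valid justification: what has to be estimated is the codimension of $\{s\in B: F^{(s)}_t=H_t\}$ in the \emph{finite-dimensional} parameter space $B$, not in the function space. That the target $C^\infty(M)$ is infinite-dimensional buys you nothing directly, because $H_t$ is a fixed function, not a varying one, and the constraint on $s$ could in principle be low-codimensional (indeed, at the isolated times where $\cos(2\pi t)=0$ or $\sin(2\pi t)=0$, one coefficient drops out and the bad set at that $t$ is an entire hyperplane in $B$). To close the gap you should do what the paper does in spirit: evaluate at finitely many points, say $x_0\in\mathrm{supp}(g_1)$ and $x_1\in\mathrm{supp}(g_2)$ with $g_i(x_i)\ne 0$, set $\psi(s,t)=\bigl(F^{(s)}_t(x_0)-H_t(x_0),\,F^{(s)}_t(x_1)-H_t(x_1)\bigr)$, and argue that away from finitely many $t$ the map $s\mapsto\psi(s,t)$ is an affine submersion onto $\R^2$, so that $\psi^{-1}(0)\subset B\times S^1$ is (after handling the finitely many degenerate $t$) a set whose projection to $B$ has measure zero. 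With that fix, the disjoint-support route is a legitimate alternative to the paper's reduction to $T^*_pM$ --- arguably more elementary, since it avoids Lemma~\ref{lem:two-par-fam} and the Poisson bracket, at the mild cost of a slightly more delicate case analysis at the degenerate times.
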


In particular, the isotopy $\Phi_F^{-1} \circ \Phi_H$ can be chosen arbitrarily close to the isotopy $\Phi_H$ in the contact metric of Definition~\ref{dfn:contact-metric}.
The proof is an adaptation of the one given in \cite{polterovich:ggs01} for Hamiltonian isotopies, and is divided into three steps.
Recall that by Lemma~\ref{lem:contact-ham}, the isotopy $\Phi_F^{-1} \circ \Phi_H$ is generated by the Hamiltonian that at time $t$ is given by
	\[ (\overline{F} \# H)_t = e^{- f_t} \cdot \left( (H_t - F_t) \circ \phi_F^t \right), \]
so that $\Phi_F^{-1} \circ \Phi_H$ is regular if and only if for every time $t$ the function $H_t - F_t$ is not identically zero.

\begin{proof}
Step 1.
Consider a collection of smooth functions $G^1, \ldots, G^{2 k} \colon S^1 \times M \to \R$, $1 \le k \le n$, with the property
	\[ \int_0^1 G^j (t,x) \, dt = 0 \]
for every $x \in M$.
Here and in the following, we identify $S^1$ with $\R / \Z$.
For example, one may choose $G^j (t,x) = f_j (t) \cdot g_j (x)$ for smooth functions $f_j \colon S^1 \to \R$ with mean value zero, and $g_j \colon M \to \R$.
Then for a fixed $t \in S^1$, define $\phi_{t,\epsilon}^j \in \Diff (M,\xi)$ as the time-$\epsilon$ map of the contact isotopy generated by the \emph{time-independent} Hamiltonian $\int_0^t G_s^j \, ds$.
If $\epsilon = (\epsilon_1, \ldots, \epsilon_{2 k})$ is a vector in $\R^{2 k}$, the composition
	\[ \phi_{t,\epsilon} = \phi_{t,\epsilon_1}^1 \circ \cdots \circ \phi_{t,\epsilon_{2 k}}^{2 k} \in \Diff (M,\xi) \]
defines a \emph{$2 k$-parameter variation of the constant loop}, that is, a smooth family of contact loops $\{ \phi_{t,\epsilon} \}_{0 \le t \le 1}$ with $\phi_{0,\epsilon} = \phi_{1,\epsilon} = \id$, and $\phi_{t,0} = \id$ for all $t$.
Denote by $F_\epsilon \colon S^1 \times M \to \R$ the smooth Hamiltonian generating the isotopy $\{ \phi_{t,\epsilon} \}_{0 \le t \le 1}$, and let $F_{t,\epsilon} = F_\epsilon (t, \cdot)$.
We need the following lemma.

\begin{lem} \label{lem:two-par-fam}
Let $\phi_{s,t} \in \Diff (M,\xi)$ be a smooth two-parameter family of contact diffeomorphisms with $\phi_{0,0} = \id$, and denote by $H = \{ H_{s,t} \}$ and $F = \{ F_{s,t} \}$ the families of smooth functions on $M$ that generate the contact isotopies $\{ \phi_{s,t} \}_{0 \le t \le 1}$ and $\{ \phi_{s,t} \}_{0 \le s \le 1}$, respectively.
Then
\begin{align} \label{eqn:two-par-fam-fctns}
	\frac{d}{ds} H_{s,t} = \frac{d}{dt} F_{s,t} - \{ H_{s,t}, F_{s,t} \},
\end{align}
where the \emph{Poisson bracket} is defined by $\{ H, F \} = - \alpha ([X_H, X_F])$.
\end{lem}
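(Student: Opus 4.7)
The plan is to compute the mixed partial derivative $\partial_s \partial_t \phi_{s,t} = \partial_t \partial_s \phi_{s,t}$ in two ways, translate the resulting identity on infinitesimal generators into an identity on the corresponding Hamiltonians via the linear correspondence $K \leftrightarrow X_K$, and finally apply $\alpha$ to extract the scalar relation (\ref{eqn:two-par-fam-fctns}).

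First, by the definitions of $H$ and $F$ together with (\ref{eqn:inf-gen}) (viewed with respect to $t$ and $s$ separately), we have
\begin{align*}
    \partial_t \phi_{s,t} = X_{H_{s,t}} \circ \phi_{s,t}, \qquad
    \partial_s \phi_{s,t} = X_{F_{s,t}} \circ \phi_{s,t}.
\end{align*}
Differentiating the first identity with respect to $s$, using the chain rule and the fact that $\partial_s \phi_{s,t} = X_{F_{s,t}} \circ \phi_{s,t}$, and doing the analogous computation in the other order, the equality of mixed partials $\partial_s \partial_t \phi_{s,t} = \partial_t \partial_s \phi_{s,t}$ yields the vector field identity
\begin{align*}
    \partial_s X_{H_{s,t}} - \partial_t X_{F_{s,t}} = [X_{H_{s,t}}, X_{F_{s,t}}]
\end{align*}
pulled back along $\phi_{s,t}$ and hence valid globally on $M$ (with the sign convention $[Y,Z] = dZ \cdot Y - dY \cdot Z$, which is the one matching $[Y,Z]f = Y(Zf) - Z(Yf)$).

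Next, observe that the map $C^\infty(M) \to \mathfrak{X}(M,\xi)$, $K \mapsto X_K$, defined by the relations (\ref{eqn:contact-ham}) is $\R$-linear. Linearity applied pointwise in the parameters $s$ and $t$ gives $\partial_s X_{H_{s,t}} = X_{\partial_s H_{s,t}}$ and $\partial_t X_{F_{s,t}} = X_{\partial_t F_{s,t}}$. Substituting and applying the contact form $\alpha$ to both sides of the vector field identity, and using $\alpha(X_K) = K$ together with the definition $\{H, F\} = -\alpha([X_H, X_F])$, we obtain $\partial_s H_{s,t} - \partial_t F_{s,t} = -\{H_{s,t}, F_{s,t}\}$, which is precisely (\ref{eqn:two-par-fam-fctns}).

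There is no essential obstacle; the only step requiring care is the sign of the Poisson bracket and the bookkeeping of composition by $\phi_{s,t}$ when differentiating, but once one works with the generators $X_{H_{s,t}}, X_{F_{s,t}}$ (which are independent of $x$, depending only on $(s,t)$), the relation reduces to the classical identity for flows of two commuting parameters.
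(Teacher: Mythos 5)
Your proof is correct and follows essentially the same path as the paper's: first establish the two-parameter vector field identity $\frac{d}{ds} X_{s,t} = \frac{d}{dt} Y_{s,t} + [X_{s,t}, Y_{s,t}]$ for the generators, then contract with $\alpha$ (which is $s$- and $t$-independent) and use $\alpha(X_H)=H$, $\alpha(X_F)=F$ together with the sign convention for the Poisson bracket. The only difference is that you derive the vector field identity from equality of mixed partials, whereas the paper simply cites Proposition~I.1.1 of Banyaga's book \cite{banyaga:sgd78}, which is proved by exactly the computation you sketch.
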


\begin{proof}
If $\{ \phi_{s,t} \}$ is a two-parameter family of diffeomorphisms with $\phi_{0,0} = \id$, write
	\[ X_{s,t} = \left( \frac{d}{dt} \phi_{s,t} \right) \circ \phi_{s,t}^{-1}, \ \ \mbox{and} \ \ Y_{s,t} = \left( \frac{d}{ds} \phi_{s,t} \right) \circ \phi_{s,t}^{-1}. \]
By Proposition~I.1.1 in \cite{banyaga:sgd78},
\begin{align} \label{eqn:two-par-fam-vfs}
	\frac{d}{ds} X_{s,t} = \frac{d}{dt} Y_{s,t} + [X_{s,t}, Y_{s,t}].
\end{align}
Since $\phi_{s,t} \in \Diff (M,\xi)$ for all $s$ and $t$, $X_{s,t}$ and $Y_{s,t}$ are contact vector fields, and $H_{s,t} = \alpha (X_{s,t})$ and $F_{s,t} = \alpha (Y_{s,t})$.
Contracting $\alpha$ with both sides of (\ref{eqn:two-par-fam-vfs}), and observing that $\alpha$ is independent of $s$ and $t$, proves (\ref{eqn:two-par-fam-fctns}).
\end{proof}

A similar result also holds for smooth two-parameter families of Hamiltonian diffeomorphisms, see \cite{banyaga:sgd78, banyaga:scd97, polterovich:ggs01}.
Namely, if $\widehat{H} = \{ \widehat{H}_{s,t} \}$ and $\widehat{F} = \{ \widehat{F}_{s,t} \}$ are smooth Hamiltonians generating a two-parameter family $\{ \widehat{\phi}_{s,t} \}$ of Hamiltonian diffeomorphisms, then
\begin{align} \label{eqn:two-par-fam-hams}
	\frac{d}{ds} \widehat{H}_{s,t} = \frac{d}{dt} \widehat{F}_{s,t} - \{ \widehat{H}_{s,t}, \widehat{F}_{s,t} \},
\end{align}
where this time the Poisson bracket is given by $\{ \widehat{H}, \widehat{F} \} = \omega (X_{\widehat{H}}, X_{\widehat{F}})$.
In fact, the proof of Lemma~\ref{lem:two-par-fam} given above follows the same line of argument as in the Hamiltonian case.
Alternatively, one can lift the two-parameter family $\{ \phi_{s,t} \}$ of Lemma~\ref{lem:two-par-fam} to a two-parameter family $\{ \widehat{\phi}_{s,t} \}$ of Hamiltonian diffeomorphisms on the symplectization $(M \times \R, - d (e^\theta \alpha))$ of $(M,\alpha)$, such that
	\[ \widehat{H}_{s,t} (x, \theta) = e^\theta H_{s,t} (x), \ \ \mbox{and} \ \ \widehat{F}_{s,t} (x, \theta) = e^\theta F_{s,t} (x). \]
With our sign conventions, $\{ \widehat{H}_{s,t}, \widehat{F}_{s,t} \} = e^\theta \left\{ H_{s,t}, F_{s,t} \right\}$, and thus equation (\ref{eqn:two-par-fam-fctns}) also follows from the Hamiltonian version (\ref{eqn:two-par-fam-hams}) of the lemma.

Returning to the proof of Proposition~\ref{pro:regular}, observe that $F_0 = 0$, and therefore Lemma~\ref{lem:two-par-fam} yields
	\[ \left. \frac{d}{d\epsilon_j} \right|_{\epsilon = 0} F_{t,\epsilon} = G_t^j. \]

Step 2.
Fix a point $p \in M$, and consider an even dimensional linear subspace $E^{2 k} \subset T_p^* M$, where again $1 \le k \le n$.
Choose a basis $\{ u_1, \ldots, u_k, v_1, \ldots v_k \}$ of $E$, and for $1 \le j \le k$, define
	\[ \gamma_j (t) = \cos (2 \pi t) u_j + \sin (2 \pi t) v_j, \ \ \mbox{and} \ \ \gamma_{k + j} (t) = - \cos (2 \pi t) u_j + \sin (2 \pi t) v_j. \]
The vectors $\gamma_1 (t), \ldots, \gamma_{2 k} (t)$ are linearly independent for each $t$, and $\int_0^1 \gamma_j (t) \, dt = 0$.
By the last equality, a collection of functions $G^1, \ldots, G^{2 k}$ as in step 1 can be chosen so that $dG_t^j (p) = - \gamma_j (t)$.
For example, in local coordinates let $u_j$ correspond to $(dx_j)_p$ and $v_j$ to $(dy_j)_p$, and define locally
	\[ G_t^j = - \cos (2 \pi t) x_j - \sin (2 \pi t) y_j, \ \ \mbox{and} \ \ G_j^{k + j} = \cos (2 \pi t) x_j - \sin (2 \pi t) y_j. \]
Then cut off the functions $x_j$ and $y_j$ in a neighborhood of $p$ in order to obtain globally defined smooth functions $G_t^j$ on $M$.

Step 3. Define a mapping $I \colon S^1 \times \R^{2 k} \to E^{2 k}$ by $(t, \epsilon) \mapsto d (H_t - F_{t,\epsilon}) (p)$.
Since
	\[ \left. \frac{d}{d\epsilon_j} \right|_{\epsilon = 0} I (t,\epsilon) = \gamma_j (t), \]
$I$ is a submersion into a neighborhood $U$ of the circle $\{ \epsilon = 0 \}$.
Denote the restriction of $I$ to $S^1 \times U$ by $J$.
Then $J^{-1} (0)$ is a one-dimensional submanifold of $S^1 \times U$, so its projection to $U$ is nowhere dense.
Hence there exist arbitrarily small values of the parameter $\epsilon$ such that $d (H_t - F_{t,\epsilon}) (p) \not= 0$ for all $t$.
The contact isotopy generated by such a smooth Hamiltonian $\overline{F_\epsilon} \# H$ is regular, and the proof of Proposition~\ref{pro:regular} is complete.
\end{proof}

Suppose that in Proposition~\ref{pro:regular}, the isotopy $\Phi_H$ is strictly contact, or in other words, its Hamiltonian is basic, and one tries to find a Hamiltonian $F$ as in the same proposition that is also basic.
The difficulty in adapting the above proof is that in general, it is not possible to extend a locally defined basic function on standard contact $\R^{2n + 1}$ to a \emph{basic} function on $M$.

\begin{exa}[\cite{mueller:vfs11}] \label{exa:three-torus}
Let $T^3$ be the three-dimensional torus equipped with the contact form $\alpha = \cos z \, dx - \sin z \, dy$, where $x, y, z \in \R / 2 \pi \Z$ are coordinates on $T^3$.
A basic function on $(T^3,\alpha)$ is independent of $x$ and $y$.
Thus given time-dependent basic functions $G^1$ and $G^2$ on $T^3$, and a point $p \in T^3$, the cotangent vectors $dG_t^j (p) = (\frac{\partial}{\partial z} G^j) (t,p) \, dz$ are linearly dependent in $T_p^* T^3$ for all $t$.
\end{exa}

If $\alpha$ is regular, then cutting off a basic function in a neighborhood of a Reeb orbit is always possible.
Indeed, simply lift a cut-off function from the base of the associated Boothby-Wang prequantization bundle to the total space.

The last step of the proof of Proposition~\ref{pro:regular} required the existence of at least a two-parameter variation of the constant loop.
One parameter can be taken to be a perturbation in the direction of the Reeb flow.
However, a one-dimensional subspace of $E$ as in Example~\ref{exa:three-torus} does not possess a basis $\{ \gamma \}$ with $\int_0^1 \gamma (t) \, dt = 0$.

\begin{pro} \label{pro:strictly-regular}
Let $\Phi_H$ be a contact isotopy generated by a smooth Hamiltonian $H \colon S^1 \times M \to \R$.
Then there exists an arbitrarily small (in the $C^\infty$-topology) contact loop $\Phi_F$ generated by a smooth \emph{basic} Hamiltonian $F \colon S^1 \times M \to \R$, and finitely many points $t_1, \ldots, t_k$, such that the isotopy $\Phi_F^{-1} \circ \Phi_H$ is regular away from the points $t_i$, i.e.\ the smooth function $H_t - F_t$ is not identically zero for all $t \notin \{ t_1, \ldots, t_k \}$.
In fact, the function $F$ can be chosen so that $F (t, x) = f (t)$ for a smooth function $f \colon S^1 \to \R$, and given any subset $T \subset S^1$ with empty interior, we may impose $t_i \notin T$ for all $i = 1, \ldots, k$.
\end{pro}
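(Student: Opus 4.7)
The plan is to take $F(t, x) = f(t)$ depending only on time, which is automatically basic. From the defining identities~\eqref{eqn:contact-ham} one obtains $X_F^t = f(t)\, R_\alpha$, so
\[
\phi_F^t \;=\; \phi_{R_\alpha}^{\int_0^t f(s)\, ds}
\]
is a reparameterization of the Reeb flow; requiring $\int_0^1 f(s)\,ds = 0$ makes $\Phi_F$ a contact loop, while smallness of $f$ in $C^\infty(S^1,\R)$ transfers to smallness of $\Phi_F$ in $C^\infty$. By Lemma~\ref{lem:contact-ham}, the isotopy $\Phi_F^{-1} \circ \Phi_H$ is generated by
\[
(\overline{F} \# H)_t = e^{-f(t)} \cdot \bigl( (H_t - f(t)) \circ \phi_F^t \bigr),
\]
which vanishes identically on $M$ at time $t$ precisely when $H_t$ is the constant function $f(t)$.

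The crucial observation is that the function $c \colon S \to \R$, where $S := \{t \in S^1 : H_t \text{ is constant on } M\}$ and $c(t) := H_t(x)$ for $x \in M$, is the restriction of the \emph{smooth} function $\hat c \colon S^1 \to \R$, $\hat c(t) := H_t(x_0)$ for any fixed $x_0 \in M$; indeed $\hat c$ is smooth since $H$ is, and $\hat c|_S = c$ because $H_t$ is constant on $M$ for $t \in S$. The bad set of times is then
\[
B_f = \{ t \in S : f(t) = c(t) \} = S \cap \{ t \in S^1 : f(t) = \hat c(t) \},
\]
so it suffices to make the (a priori larger) set $\{f = \hat c\} \subset S^1$ finite and disjoint from $T$.

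To achieve this, I would apply Sard's theorem to the two-parameter smooth family $f_{a,b}(t) := a \sin(2\pi t) + b \cos(2\pi t)$, each member of which has mean zero and is $C^\infty$-small when $(a,b)$ is small. The smooth map $\Psi \colon \R^2 \times S^1 \to \R$, $\Psi(a,b,t) = f_{a,b}(t) - \hat c(t)$, is a submersion because its $(a,b)$-gradient equals $(\sin(2\pi t), \cos(2\pi t)) \neq 0$, so $\Psi^{-1}(0)$ is a smooth two-dimensional submanifold of $\R^2 \times S^1$. Applying Sard's theorem to the projection $\Psi^{-1}(0) \to \R^2$, for Lebesgue-a.e.\ $(a,b) \in \R^2$ the fiber $\{t \in S^1 : f_{a,b}(t) = \hat c(t)\}$ consists of finitely many transverse, hence isolated, points. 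Choosing such $(a,b)$ arbitrarily small then yields a smooth mean-zero $f = f_{a,b}$ with $B_f$ finite.

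The remaining task, and the main place for technical care, is ensuring $B_f \cap T = \varnothing$. Having obtained $B_f = \{t_1, \ldots, t_k\}$ consisting of transverse zeros of $f - \hat c$, any $t_i \in T$ can be displaced as follows: choose pairwise disjoint neighborhoods $U_i$ of the $t_i$, and a $C^\infty$-small perturbation $f \mapsto f + \delta \varphi$ with $\varphi$ a smooth mean-zero function whose restriction to each $U_i$ is a bump. The implicit function theorem, applied at each $t_i$ where the derivative of $f - \hat c$ is nonzero, shows that for sufficiently small $\delta$ the unique nearby zero of the perturbed function depends continuously on the perturbation; since $S^1 \setminus T$ is dense, the bumps can be chosen so that every new zero lies outside $T$, without creating further zeros outside $\bigcup U_i$. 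The main subtlety is arranging the perturbation globally to simultaneously respect the mean-zero (loop) condition and move all offending $t_i$ off $T$ at once, which can be done by balancing the local bumps against each other; Example~\ref{exa:three-torus} clarifies why only the Reeb direction is available for such perturbations, explaining the weaker conclusion relative to Proposition~\ref{pro:regular}.
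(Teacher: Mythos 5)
Your argument follows essentially the same route as the paper's: take $F_t = f(t)\cdot 1$ for a mean-zero $f\colon S^1 \to \R$, and reduce the regularity statement to the set $\{t : H(t,x_0) = f(t)\}$ (your $\{f = \hat c\}$, the paper's $\{H(\cdot,p)-f = 0\}$) being finite and disjoint from $T$. The paper simply asserts that such an $f$ exists; you supply a proof via parametric transversality, which is a reasonable way to fill the gap.

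Two remarks. First, a notational slip: in the formula
\[
(\overline{F}\# H)_t = e^{-f_t}\cdot\bigl((H_t - F_t)\circ\phi_F^t\bigr),
\]
the exponent is the \emph{conformal factor} of $\Phi_F$, not the Hamiltonian $f(t)$; since $F$ is basic this conformal factor is identically zero, so the prefactor is $1$. You have conflated the two, though since $e^{-f(t)}>0$ the zero set is unaffected and the argument survives. Second, the $T$-avoidance step can be tightened considerably, avoiding the bump-balancing you describe as hand-wavy. Having picked $(a_0,b_0)$ small with $\{f_{a_0,b_0} = \hat c\}$ finite and transverse, the implicit function theorem gives smooth local branches $t_i(a,b)$ near $(a_0,b_0)$. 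Differentiating $a\sin(2\pi t_i)+b\cos(2\pi t_i) = \hat c(t_i)$ shows that $\partial t_i/\partial a \neq 0$ when $\sin(2\pi t_i)\neq 0$ and $\partial t_i/\partial b\neq 0$ when $\cos(2\pi t_i)\neq 0$, so each $t_i$ is a submersion $\R^2 \to S^1$. The preimage under a submersion of the nowhere dense set $T$ is nowhere dense, and a finite union of nowhere dense sets is nowhere dense; therefore arbitrarily close to $(a_0,b_0)$ there are parameters $(a,b)$ (still in the open set where transversality holds) with all zeros $t_i(a,b)\notin T$. This keeps the whole construction within the original two-parameter family and automatically respects the mean-zero constraint, with no need for post hoc perturbation.
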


\begin{proof}
Fix a point $p \in M$.
There exists an arbitrarily $C^\infty$-small smooth function $f \colon S^1 \to \R$ with $\int_{S^1} f \, dt = 0$, such that the smooth map $S^1 \to \R$ defined by $t \mapsto H (t, p) - f (t)$ has only finitely many zeros that occur away from the subset $T \subset S^1$.
The basic Hamiltonian $F$ defined by $F_t = f (t) \cdot 1$ generates a loop by formula (\ref{eqn:loop}) in the beginning of the next section.
\end{proof}

\section{Reparameterization of contact isotopies} \label{sec:rep}
Suppose $H \colon [0,1] \times M \to \R$ is a smooth Hamiltonian, generating the contact isotopy $\Phi_H = \{ \phi_H^t \}$, $a < b$ are real numbers, and $\zeta \colon [a,b] \to [0,1]$ is a smooth function.
The reparameterized isotopy
	\[ \Phi_{H^\zeta} = \{ \phi_{H^{\zeta}}^t \}_{a \le t \le b} = \{ \phi_H^{\zeta (t)} \}_{a \le t \le b} \]
is generated by the Hamiltonian $H^\zeta \colon [a,b] \times M \to \R$, defined by
\begin{align} \label{eqn:rep}
	H^\zeta (t, x) = \zeta' (t) \cdot H (\zeta(t), x),
\end{align}
where $\zeta'$ denotes the derivative of $\zeta$.
In particular, $\Phi_{H^\zeta}$ is a loop if and only if
\begin{align} \label{eqn:loop}
	\zeta (b) - \zeta (a) = \int_a^b \zeta' (t) \, dt = 0.
\end{align}
If $\zeta (a) = 0$, $\zeta (b) = 1$, and the function $\zeta$ is monotone, then the reparameterized isotopy traverses the same path as the original isotopy at different speed, and we refer to the function $\zeta$ as a \emph{reparameterization function}.
In the special case $\zeta (t) = s t$ for a real number $s$, we also write $H^\zeta = H^s$ as in sections~\ref{sec:contact-homeo} and \ref{sec:contact-topo}.
Since $\phi_{H^\zeta}^t = \phi_H^{\zeta (t)}$, the conformal factor $h^\zeta$ of the isotopy $\Phi_{H^\zeta}$ is given by $h_t^\zeta = h_{\zeta (t)}$.
This also follows from a change of variables in equation (\ref{eqn:conformal-factor}).
In particular, if $\Phi_H$ is strictly contact, then so is the reparameterized isotopy $\Phi_{H^\zeta}$.

We state a series of useful lemmas.
The proofs are straightforward and similar to the Hamiltonian case in \cite{mueller:ghh07, mueller:ghh08, mueller:ghl08}, and thus are omitted.

\begin{lem} \label{lem:zeta-est}
Let $H \colon [0,1] \times M \to \R$ be a smooth Hamiltonian function, and $\zeta_1, \zeta_2 \colon [0,1] \to [0,1]$ be two smooth functions.
Then
	\[ \osc \left( H_t^{\zeta_1} - H_t^{\zeta_2} \right) \le 2 L \cdot | \zeta_1' (t) | \cdot | \zeta_1 (t) - \zeta_2 (t) | + | \zeta_1' (t) - \zeta_2' (t) | \cdot \osc \left( H_{\zeta_2 (t)} \right), \]
and
	\[ \left| c \left( H_t^{\zeta_1} - H_t^{\zeta_2} \right) \right| \le L \cdot | \zeta_1' (t) | \cdot | \zeta_1 (t) - \zeta_2 (t) | + | \zeta_1' (t) - \zeta_2' (t) | \cdot \left| c \left( H_{\zeta_2 (t)} \right) \right|, \]
for all $0 \le t \le 1$, where $L$ is a Lipschitz constant that depends only on $H$.
If in addition $\zeta_1$ is monotone, then
	\[ \left\| H^{\zeta_1} - H^{\zeta_2} \right\|_\oneinfty \le 3 L \cdot \max_{0 \le t \le1} | \zeta_1 (t) - \zeta_2 (t) | + \| H \|_\infty \cdot \int_0^1 | \zeta_1' (t) - \zeta_2' (t) | \, dt. \]
\end{lem}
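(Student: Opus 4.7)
The plan is to decompose the difference $H_t^{\zeta_1}(x) - H_t^{\zeta_2}(x)$ as a sum of two terms, one for each source of discrepancy --- different evaluation times in $H$ and different derivative factors from the reparameterization formula~(\ref{eqn:rep}) --- and then estimate each summand by the appropriate seminorm. Explicitly, I would write
\begin{equation*}
H_t^{\zeta_1}(x) - H_t^{\zeta_2}(x) = \zeta_1'(t)\bigl[H(\zeta_1(t),x) - H(\zeta_2(t),x)\bigr] + (\zeta_1'(t) - \zeta_2'(t))\,H(\zeta_2(t),x),
\end{equation*}
and exploit smoothness of $H$ on the compact set $[0,1] \times M$ to produce a Lipschitz constant $L$, depending only on $H$, so that the bracket is bounded in absolute value by $L|\zeta_1(t) - \zeta_2(t)|$ uniformly in $x$.

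For the first inequality I apply $\osc$ term by term, using that $\osc$ is a seminorm with $\osc(cF) = |c|\,\osc(F)$ and that $\osc(F) \le 2 \max_x|F(x)|$ for any real-valued $F$ on $M$. The first summand then contributes $2L|\zeta_1'(t)||\zeta_1(t) - \zeta_2(t)|$, and the second contributes exactly $|\zeta_1'(t) - \zeta_2'(t)|\osc(H_{\zeta_2(t)})$. The mean-value estimate is the same computation with $c$ in place of $\osc$: since $c$ is linear and $|c(F)| \le \max_x|F(x)|$, the factor of two is absent.

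For the third inequality I add the two pointwise-in-$t$ bounds to get
\begin{equation*}
\|H_t^{\zeta_1} - H_t^{\zeta_2}\| \le 3L|\zeta_1'(t)||\zeta_1(t) - \zeta_2(t)| + |\zeta_1'(t) - \zeta_2'(t)| \cdot \|H_{\zeta_2(t)}\|,
\end{equation*}
bound $\|H_{\zeta_2(t)}\| \le \|H\|_\infty$ in the second summand, integrate over $[0,1]$, and pull $\max_t|\zeta_1(t) - \zeta_2(t)|$ out of the first. What remains there is $\int_0^1|\zeta_1'(t)|\,dt$, which by monotonicity of $\zeta_1 \colon [0,1] \to [0,1]$ collapses to $|\zeta_1(1) - \zeta_1(0)| \le 1$.

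There is no genuine obstacle: the lemma is mechanical bookkeeping around the product rule and standard seminorm inequalities. The only place where a hypothesis plays a non-trivial role is the final step, where monotonicity of $\zeta_1$ is precisely what converts the $L^1$-norm of $\zeta_1'$ into the endpoint gap $\zeta_1(1) - \zeta_1(0)$; without this assumption one would have to carry the total variation of $\zeta_1$ as an additional factor.
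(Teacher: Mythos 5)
Your proof is correct, and it is exactly the ``straightforward'' argument the authors have in mind: the paper omits the proof of Lemma~\ref{lem:zeta-est} outright, stating only that it is analogous to the Hamiltonian case in \cite{mueller:ghh07, mueller:ghh08, mueller:ghl08}, and those references use precisely this add-and-subtract decomposition of $\zeta_1'(t)H(\zeta_1(t),\cdot)-\zeta_2'(t)H(\zeta_2(t),\cdot)$ together with the seminorm facts $\osc(F)\le 2\max|F|$, $|c(F)|\le\max|F|$, and monotonicity to collapse $\int_0^1|\zeta_1'|$. No gap.
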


An isotopy $\{ \phi_t \}_{a \le t \le b} $ is called \emph{boundary flat} if it is constant near the two end points, i.e.\ there exists a constant $\delta > 0$ such that $\phi_t = \phi_a$ for $t - a < \delta$, and $\phi_t = \phi_b$ for $b - t < \delta$.
In terms of the Hamiltonian of a contact isotopy $\Phi_H = \{ \phi_t \}$, this is equivalent to $H_t = 0$ for $t - a < \delta$ and $b - t < \delta$.

Given a contact isotopy $\Phi_H$, and a reparameterization function $\zeta \colon [0,1] \to [0,1]$ so that $\zeta = 0$ near $t = 0$ and $\zeta = 1$ near $t = 1$, the reparameterized isotopy $\Phi_{H^\zeta}$ is boundary flat.
Choosing the function $\zeta$ appropriately proves the following lemma.

\begin{lem}[Approximation by boundary flat contact isotopies] \label{lem:approx-bdy-flat}
Given a smooth Hamiltonian $H \colon [0,1] \times M \to \R$, and $\epsilon > 0$, there exists a reparameterization function $\zeta \colon [0,1] \to [0,1]$, such that $H^\zeta$ is boundary flat, and
	\[ \| H - H^\zeta \|_\oneinfty < \epsilon, \]
	\[ \max (\| H_0 \|, \| H_1 \|) \le \| H - H^\zeta \|_\infty < \max (\| H_0 \|, \| H_1 \|) + \epsilon, \ \mbox{and} \]
	\[ \dbar (\Phi_H, \Phi_{H^\zeta}) < \epsilon, \ \ \mbox{and} \ \ | h - h^\zeta | < \epsilon, \]
where $H_t = H (t, \cdot)$ for $ t = 0$ and $t = 1$.
Moreover, $\| H^\zeta \|_\oneinfty = \| H \|_\oneinfty$ and $\| H^\zeta \|_\infty < \| H \|_\infty + \epsilon$.
In particular, the two end points $\phi_H^{\zeta (0)} = \phi_H^0$ and $\phi_H^{\zeta (1)} = \phi_H^1$ coincide, and $H^\zeta$ can be extended to a function on $\R \times M$ that is $1$-periodic in time.
\end{lem}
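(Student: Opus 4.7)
The plan is to take $\zeta$ to be a smooth monotone perturbation of the identity that vanishes near $t = 0$ and equals $1$ near $t = 1$. Fix $\delta > 0$ small, and choose $\zeta \colon [0,1] \to [0,1]$ smooth and monotone non-decreasing, with $\zeta \equiv 0$ on $[0,\delta]$, $\zeta \equiv 1$ on $[1-\delta, 1]$, $\max_t |\zeta(t) - t| \le \delta$, $\max_t |\zeta'(t) - 1| \le c\delta$, and $\int_0^1 |\zeta'(t) - 1| \, dt \le c\delta$ for an absolute constant $c$. Such a $\zeta$ exists: on $[\delta, 1-\delta]$ the mean of $\zeta'$ forced by the endpoint conditions equals $1/(1-2\delta) = 1 + O(\delta)$, so one can start with $\zeta'$ piecewise constant equal to $1/(1-2\delta)$ on the interior and $0$ on the boundary strips, and smooth out the corners. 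Then $H^\zeta_t = \zeta'(t) H_{\zeta(t)}$ vanishes on $[0,\delta] \cup [1-\delta, 1]$, so $H^\zeta$ is boundary flat and admits a $1$-periodic extension, and $\phi_H^{\zeta(0)} = \phi_H^0$, $\phi_H^{\zeta(1)} = \phi_H^1$ are immediate from $\zeta(0) = 0$ and $\zeta(1) = 1$.

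For the $L^\oneinfty$ estimate, Lemma~\ref{lem:zeta-est} applied with $\zeta_1 = \zeta$ and $\zeta_2 = \id_{[0,1]}$ (both monotone) yields
\[
\|H - H^\zeta\|_\oneinfty \le 3 L \max_t |\zeta(t) - t| + \|H\|_\infty \int_0^1 |\zeta'(t) - 1| \, dt \le c' (L + \|H\|_\infty) \delta,
\]
which is less than $\epsilon$ for $\delta$ small. The identity $\|H^\zeta\|_\oneinfty = \|H\|_\oneinfty$ comes from the change of variables $s = \zeta(t)$, $ds = \zeta'(t) \, dt$, which is legitimate since $\zeta' \ge 0$ and $\zeta$ carries $[0,1]$ onto $[0,1]$. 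The bound $\|H^\zeta\|_\infty \le (\max_t \zeta'(t)) \|H\|_\infty \le (1 + c \delta) \|H\|_\infty < \|H\|_\infty + \epsilon$ follows from the pointwise estimate on $\zeta' - 1$. For the $C^0$ distance of the isotopies, uniform continuity of the maps $(t,x) \mapsto \phi_H^t(x)$ and $(t,x) \mapsto (\phi_H^t)^{-1}(x)$ on $[0,1] \times M$ gives $\dbar(\Phi_H, \Phi_{H^\zeta}) = \max_t \dbar(\phi_H^t, \phi_H^{\zeta(t)}) < \epsilon$ once $\max_t |t - \zeta(t)|$ is sufficiently small; combining the identity $h^\zeta_t = h_{\zeta(t)}$ (visible either from $\phi_{H^\zeta}^t = \phi_H^{\zeta(t)}$ or from a change of variables in (\ref{eqn:conformal-factor})) with uniform continuity of $h$ yields $|h - h^\zeta| < \epsilon$ analogously.

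The only inequality requiring a bit more care is the two-sided bound on $\|H - H^\zeta\|_\infty$. The lower bound is automatic because $\zeta'(0) = \zeta'(1) = 0$ forces $H^\zeta_0 = H^\zeta_1 = 0$, so $(H - H^\zeta)_i = H_i$ for $i = 0, 1$ and hence $\|H - H^\zeta\|_\infty \ge \max(\|H_0\|, \|H_1\|)$. For the matching upper bound I would split $[0,1]$ into the flat boundary strips and the interior. On $[0,\delta] \cup [1-\delta, 1]$ we have $H - H^\zeta = H$ and $t \mapsto \|H_t\|$ is continuous, so $\|H_t\|$ stays within $\epsilon/2$ of $\|H_0\|$ or $\|H_1\|$ when $\delta$ is small. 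On $[\delta, 1-\delta]$ the triangle inequality $\|H_t - \zeta'(t) H_{\zeta(t)}\| \le \|H_t - H_{\zeta(t)}\| + |1 - \zeta'(t)| \|H_{\zeta(t)}\|$, together with uniform continuity of $t \mapsto H_t$ in $\|\cdot\|$ and $\max_t |1 - \zeta'(t)| \le c\delta$, gives a bound of order $\delta$. No step presents a deep obstacle; the main bookkeeping is choosing $\delta$ small enough that all of the listed inequalities hold simultaneously, which is immediate since finitely many are imposed.
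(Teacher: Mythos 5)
Your overall strategy is the right one and most of the estimates are handled correctly, but the construction of $\zeta$ as you state it is internally inconsistent, and this inconsistency then propagates into the one estimate that actually requires care.

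Specifically, you impose simultaneously $\zeta \equiv 0$ on $[0,\delta]$, $\zeta \equiv 1$ on $[1-\delta,1]$, and $\max_t |\zeta'(t)-1|\le c\delta$. On the two flat strips one has $\zeta'\equiv 0$, hence $|\zeta'(t)-1|=1$, which contradicts the bound for small $\delta$. Even restricting to the ``interior'' $[\delta,1-\delta]$ does not help: by smoothness $\zeta'(\delta)=\zeta'(1-\delta)=0$, so $\zeta'$ must sweep through every value in $[0,1/(1-2\delta)]$ inside transition subintervals of $[\delta,1-\delta]$, and there $|1-\zeta'(t)|$ rises all the way to $1$. This is exactly the bound you invoke in the last paragraph to conclude, on $[\delta,1-\delta]$, that $\|H_t - \zeta'(t)H_{\zeta(t)}\|$ is ``of order $\delta$''. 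That conclusion is false in the transition region, where the correct size of this term is $\approx |1-\zeta'(t)|\,\|H_0\|$, which is not $O(\delta)$ unless $H_0=0$. In other words, the split into ``flat strips'' ($[0,\delta]\cup[1-\delta,1]$) and ``interior'' ($[\delta,1-\delta]$) puts the hard part of the estimate — the transition of $\zeta'$ — into the piece you treat as easy.

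The fix is to make the boundary strips for the $L^\infty$ case analysis strictly larger than the flat strips, so that they also absorb the transition region. Choose $0<\delta\ll\delta'$ and build $\zeta$ so that $\zeta\equiv 0$ on $[0,\delta]$, $\zeta\equiv 1$ on $[1-\delta,1]$, $\zeta'\equiv 1/(1-2\delta)$ on $[\delta',1-\delta']$, $0\le\zeta'\le 1/(1-2\delta)$ everywhere, and $\max_t|\zeta(t)-t|\le\delta'$. Then on $[\delta',1-\delta']$ one indeed has $|1-\zeta'(t)|=O(\delta)$ and $|\zeta(t)-t|\le\delta'$, so the interior estimate goes through via the triangle inequality and uniform continuity. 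On the enlarged boundary strips $[0,\delta']\cup[1-\delta',1]$ one has both $t,\zeta(t)$ close to $0$ (resp.\ $1$), so
\[
\|H_t - \zeta'(t)H_{\zeta(t)}\|
\le \|H_t-H_0\| + |1-\zeta'(t)|\,\|H_0\| + \zeta'(t)\,\|H_0-H_{\zeta(t)}\|
\le 2\,\omega(\delta') + \|H_0\| + O(\delta)\,\omega(\delta'),
\]
using $|1-\zeta'(t)|\le 1$ (which holds since $0\le\zeta'\le 1/(1-2\delta)<2$), and where $\omega$ is the modulus of continuity of $t\mapsto H_t$ in $\|\cdot\|$. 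This gives the bound $\max(\|H_0\|,\|H_1\|)+\epsilon$ once $\delta'$ is small. Note also that the unconditional claim $\max_t|\zeta'(t)-1|\le c\delta$ is not needed anywhere else: for $\|H^\zeta\|_\infty$ you only use the one-sided bound $\max_t\zeta'(t)\le 1+c\delta$, and for the $L^\oneinfty$ estimate via Lemma~\ref{lem:zeta-est} only $\int_0^1|\zeta'-1|\,dt$ needs to be small, which holds. The lower bound $\|H-H^\zeta\|_\infty\ge\max(\|H_0\|,\|H_1\|)$, the identity $\|H^\zeta\|_\oneinfty=\|H\|_\oneinfty$ by change of variables, the $C^0$ estimate on $\dbar(\Phi_H,\Phi_{H^\zeta})$, and the estimate on $|h-h^\zeta|$ via $h^\zeta_t=h_{\zeta(t)}$ are all correct as written.
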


For later reference, we consider the following reparameterization, which is useful to concatenate boundary flat contact isotopies.
Given $a < b$, and a Hamiltonian $H$ defined on $[0,1] \times M$, denote by $\zeta_{a,b} \colon [a,b] \to [0,1]$ the unique linear function with $\zeta (a) = 0$ and $\zeta (b) = 1$, and by $H^{a,b}$ the reparameterized Hamiltonian defined on $[a,b] \times M$.
Of course, if $\Phi_H$ is boundary flat, then so is its reparameterization.
The two norms satisfy $\| H^{a,b} \|_\oneinfty = \| H \|_\oneinfty$, and
\begin{align} \label{eqn:interval-change}
	\| H^{a,b} \|_\infty = \frac{1}{b - a} \| H \|_\infty.
\end{align}

The main ingredient in the proof of Lemma~\ref{lem:main} is the following result.
This is where we apply the regularization procedure established in the previous section.
Cf.\ \cite{mueller:ghh08, mueller:ghl08}.

\begin{lem} \label{lem:tech-lem}
Let $H \colon [0,1] \times M \to \R$ be a smooth Hamiltonian, generating the contact isotopy $\Phi_H = \{ \phi_H^t \}$, with $(\phi_H^t)^* \alpha = e^{h_t} \alpha$, and let $\epsilon > 0$.
Then there exists a smooth Hamiltonian $F \colon [0,1] \times M \to \R$, with $(\phi_F^t)^* \alpha = e^{f_t} \alpha$, such that the isotopy $\Phi_F$ coincides with a reparameterization of the isotopy $\Phi_H$, up to a small reparameterization with respect to the $L^\oneinfty$-contact metric followed by a $C^\infty$-small perturbation, and
\begin{enumerate}
	\item [\emph{(i)}] the end points of the isotopies coincide, i.e.\ $\phi_F^0 = \phi_H^0$ and $\phi_F^1 = \phi_H^1$,
	\item [\emph{(ii)}] the norms satisfy the inequalities $\| F \|_\oneinfty \le \| F \|_\infty < \| H \|_\oneinfty + \epsilon$,
	\item [\emph{(iii)}] the distances of the two isotopies to their common left end point satisfy the inequality $\dbar (\Phi_F, \phi_F^0) < \dbar (\Phi_H, \phi_H^0) + \epsilon$, and
	\item [\emph{(iv)}] for the conformal factors, we have $| f - f_0 | < | h - h_0 | + \epsilon$.
\end{enumerate}
In (iii), $\phi_F^0 = \phi_H^0$ denotes the constant isotopy $t \mapsto \phi_F^0 = \phi_H^0$, and $f_0 = h_0$ its conformal factor in (iv).
\end{lem}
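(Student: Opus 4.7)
The strategy is a two-stage modification of $\Phi_H$: first, perturb by a $C^\infty$-small contact loop to make the generating Hamiltonian regular, and then reparameterize the resulting isotopy so that its generating Hamiltonian has approximately constant norm in time. For Step~1, apply Proposition~\ref{pro:regular} to produce a contact loop $\Phi_G$ generated by an arbitrarily $C^\infty$-small Hamiltonian $G$ such that $\widetilde\Phi := \Phi_G^{-1} \circ \Phi_H$ is regular, with smooth generating Hamiltonian $\widetilde H = \overline G \# H$ given by Lemma~\ref{lem:contact-ham}. Since $\phi_G^0 = \phi_G^1 = \id$, the endpoints of $\widetilde\Phi$ coincide with those of $\Phi_H$; in particular, the conformal factor satisfies $\widetilde h_0 = h_0$. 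By taking $G$ sufficiently small in $C^\infty$, the quantities $\|\widetilde H - H\|_\infty$, $\dbar(\widetilde\Phi, \Phi_H)$, and $|\widetilde h - h|$ are all as small as we wish.

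Regularity means the continuous function $t \mapsto \|\widetilde H_t\|$ is strictly positive on $[0,1]$. By a standard smoothing argument, there exists a smooth function $\rho \colon [0,1] \to (0,\infty)$ with $\rho(t) \ge \|\widetilde H_t\|$ for all $t$ and $\int_0^1 \rho(t)\,dt < \|\widetilde H\|_\oneinfty + \delta$ for any prescribed $\delta > 0$. Setting $\Gamma(s) = \int_0^s \rho(u)\,du$, let $\zeta \colon [0,1] \to [0,1]$ be the smooth diffeomorphism with $\Gamma(\zeta(t)) = t \, \Gamma(1)$, so that $\zeta'(t) \, \rho(\zeta(t)) = \Gamma(1)$. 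Define $F = \widetilde H^\zeta$; by equation~(\ref{eqn:rep}),
\[
	\|F_t\| \,=\, \zeta'(t) \, \|\widetilde H_{\zeta(t)}\| \,\le\, \zeta'(t) \, \rho(\zeta(t)) \,=\, \Gamma(1) \,<\, \|\widetilde H\|_\oneinfty + \delta
\]
for every $t$, which is the key estimate underlying condition~(ii).

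Verification of the remaining conditions is then routine: (i) holds because $\zeta(0) = 0$ and $\zeta(1) = 1$; (ii) follows from the displayed inequality together with $\|\widetilde H\|_\oneinfty \le \|H\|_\oneinfty + \|\widetilde H - H\|_\infty$, upon choosing $\delta$ and $\|\widetilde H - H\|_\infty$ each less than $\epsilon/2$; (iii) follows because $\Phi_F$ and $\widetilde\Phi$ have the same image in $\Diff(M,\xi)$, whence $\dbar(\Phi_F, \phi_F^0) = \dbar(\widetilde\Phi, \widetilde\phi^0) < \dbar(\Phi_H, \phi_H^0) + \epsilon$; and (iv) follows from the reparameterization identity $f_t = \widetilde h_{\zeta(t)}$ recorded just after equation~(\ref{eqn:loop}), combined with the $C^0$-closeness of $\widetilde h$ to $h$ and $f_0 = \widetilde h_0 = h_0$. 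The main obstacle is constructing the smooth approximation $\rho$ with small integral excess: strict positivity of $\|\widetilde H_t\|$ is what allows $\zeta$ to be a smooth diffeomorphism, and it is precisely why Step~1 is indispensable, since one cannot reparameterize $\Phi_H$ directly if its Hamiltonian vanishes at some time.
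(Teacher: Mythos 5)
Your overall strategy matches the paper's exactly: regularize by composing with a $C^\infty$-small contact loop (Proposition~\ref{pro:regular}) and then reparameterize to (approximately) constant $\| \cdot \|$-speed. The reparameterization step, however, is organized differently, and the route you take is arguably cleaner. The paper first builds the \emph{exact} constant-speed reparameterization $\zeta = \eta^{-1}$ with $\eta(t) = \int_0^t \| G_s \| \, ds / \int_0^1 \| G_s \| \, ds$, which is only $C^1$ because $t \mapsto \| G_t \|$ is merely continuous, and then approximates $\zeta$ in the $C^1$-topology by a smooth diffeomorphism $\rho$, incurring an extra error term that must be tracked. You instead smooth the speed profile first, choosing a smooth majorant $\rho \ge \| \widetilde H_t \|$ with integral within $\delta$ of $\| \widetilde H \|_\oneinfty$, and then define the smooth reparameterization $\zeta$ directly from $\Gamma(s) = \int_0^s \rho$; this builds the smoothness in from the start and gives $\| F_t \| \le \Gamma(1)$ uniformly in one stroke. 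Both versions yield the same estimate for $\| F \|_\infty$; yours avoids the two-stage ``reparameterize, then mollify'' structure.

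There is one genuine, though easily repaired, gap: you apply Proposition~\ref{pro:regular} directly to the given Hamiltonian $H \colon [0,1] \times M \to \R$, but that proposition is stated for Hamiltonians $H \colon S^1 \times M \to \R$, i.e.\ for $1$-periodic data, because its proof (Step~3) works on $S^1 \times \R^{2k}$. A Hamiltonian defined only on $[0,1]$ with $H_0 \ne H_1$ does not satisfy the hypotheses. The paper handles this by first invoking Lemma~\ref{lem:approx-bdy-flat} to replace $H$ by a boundary flat $H^\zeta$, which extends $1$-periodically to $S^1 \times M$ and costs only $\epsilon$ in each of $\dbar$, $| \cdot |$, and $\| \cdot \|_\oneinfty$ (indeed $\| H^\zeta \|_\oneinfty = \| H \|_\oneinfty$ exactly). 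You should insert this preliminary step before quoting Proposition~\ref{pro:regular}; the rest of your argument then goes through verbatim, since boundary flattening leaves $\| H \|_\oneinfty$ unchanged and perturbs the remaining quantities arbitrarily little.

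One further remark, not a gap but worth making explicit: the claim that a smooth $\rho \ge \| \widetilde H_t \|$ with $\int_0^1 \rho < \| \widetilde H \|_\oneinfty + \delta$ exists does require an argument, since $t \mapsto \| \widetilde H_t \|$ need not be smooth (the norm $\| \cdot \| = \osc + |c(\cdot)|$ is only Lipschitz in its argument). The statement is correct, but a sentence acknowledging that $t \mapsto \| \widetilde H_t \|$ is continuous and strictly positive on the compact interval $[0,1]$, and that mollification plus a small additive shift produces the desired $\rho$, would make the argument self-contained.
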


\begin{proof}
By Lemma~\ref{lem:approx-bdy-flat}, we may assume that the isotopy $\Phi_H$ is boundary flat, and its Hamiltonian can be considered as a function from $S^1 \times M$ to $\R$.
Consider the isotopy $\{ \phi_G^t \} = \{ \phi_t \circ \phi_H^t \}$, where $\{ \phi_t \}$ is a loop in $\Diff (M,\xi)$ with $\phi_0 = \phi_1 = \id$.
Clearly $\phi_G^0 = \phi_H^0$ and $\phi_G^1 = \phi_H^1$.
By Proposition~\ref{pro:regular}, we may choose the loop $\{ \phi_t \}$ so that it is arbitrarily close to the constant loop $\id$ in the $C^\infty$-metric, and in particular, its generating Hamiltonian is arbitrarily small in the $L^\oneinfty$-norm, and its conformal factor is arbitrarily close to zero.
Moreover, $\| G_t \| \not= 0$ for all $t \in S^1$.
Therefore $\dbar (\Phi_G, \phi_G^0) < \dbar (\Phi_H, \phi_H^0) + \epsilon$, $| g - g_0 | < | h - h_0 | + \epsilon$, and
	\[ \| G \|_\oneinfty < \| H \|_\oneinfty + \frac{\epsilon}{2}. \]

Consider the Hamiltonian $G^\zeta$, where $\zeta$ is the \emph{inverse} (here we use $\| G_t \| \not= 0$ for all $t$) of the function
\begin{align} \label{eqn:eta}
	\eta \colon [0,1] \longrightarrow [0,1], \ t \longmapsto \frac{\int_0^t \| G_s \| \, ds}{\int_0^1 \| G_s \| \, ds}.
\end{align}
Then $\zeta$ fixes $0$ and $1$, so that $\Phi_G^\zeta$ has the same end points as $\Phi_G$.
By the chain rule,
	\[ \zeta' (t) = \frac{\int_0^1 \| G_s \| \, ds}{\| G_{\zeta (t)} \|}, \]
hence for every $t$,
	\[ \| G_t^\zeta \| = \zeta' (t) \cdot \| G_{\zeta (t)} \| = \int_0^1 \| G_s \| \, ds = \| G \|_\oneinfty. \]
Therefore
	\[ \| G^\zeta \|_\infty = \| G \|_\oneinfty < \| H \|_\oneinfty + \frac{\epsilon}{2}. \]
The function $\zeta$ may only be $C^1$ but not $C^\infty$-smooth.
We approximate $\zeta$ in the $C^1$-topology by a smooth diffeomorphism $\rho$ of $[0,1]$ that also fixes $0$ and $1$, to obtain a smooth Hamiltonian $F = G^\rho$, with $\| F \|_\infty < \| G^\zeta \|_\infty + \frac{\epsilon}{2}$.
Then $F$ clearly satisfies (i) and (ii).
Since $\Phi_F$ is just a reparameterization of $\Phi_G$, we also have
	\[ \dbar (\Phi_F, \phi_F^0) = \dbar (\Phi_G, \phi_G^0) < \dbar (\Phi_H, \phi_H^0) + \epsilon, \]
and similarly $| f - f_0 | < | h - h_0 | + \epsilon$ for the conformal factors.
\end{proof}

\begin{lem} \label{lem:strictly-tech-lem}
Let $H \colon [0,1] \times M \to \R$ be a smooth basic Hamiltonian, generating the strictly contact isotopy $\Phi_H = \{ \phi_H^t \}$, and let $\epsilon > 0$.
Then there exists a smooth basic Hamiltonian $F \colon [0,1] \times M \to \R$ such that
\begin{enumerate}
	\item [\emph{(i)}] the end points of the isotopies coincide, i.e.\ $\phi_F^0 = \phi_H^0$ and $\phi_F^1 = \phi_H^1$,
	\item [\emph{(ii)}] the norms satisfy the inequalities $\| F \|_\oneinfty \le \| F \|_\infty < \| H \|_\oneinfty + \epsilon$, and
	\item [\emph{(iii)}] the distances of the two isotopies to their common left end point satisfy the inequality $\dbar (\Phi_F, \phi_F^0) < \dbar (\Phi_H, \phi_H^0) + \epsilon$,
\end{enumerate}
In (iii), $\phi_F^0 = \phi_H^0$ again denotes the constant isotopy $t \mapsto \phi_F^0 = \phi_H^0$.
\end{lem}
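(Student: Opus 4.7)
The plan is to adapt the proof of Lemma~\ref{lem:tech-lem} to the strictly contact setting. Since every strictly contact isotopy has vanishing conformal factor, condition (iv) of Lemma~\ref{lem:tech-lem} is automatically satisfied, so only (i)--(iii) remain to be verified. The key substitution is to use Proposition~\ref{pro:strictly-regular} in place of Proposition~\ref{pro:regular}, and to modify the reparameterization step to accommodate the finitely many times where the regularization may now fail.

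First I would apply Lemma~\ref{lem:approx-bdy-flat} to reparameterize $\Phi_H$ so that its Hamiltonian is boundary flat; this step preserves basicness since $H^\zeta_t = \zeta'(t) \cdot H_{\zeta(t)}$ is a scalar multiple of a basic function. We may then regard $H$ as a $1$-periodic basic Hamiltonian on $\R \times M$, up to arbitrarily small error in $\dbar$ and in the $L^\oneinfty$-norm. Next, apply Proposition~\ref{pro:strictly-regular} to produce a $C^\infty$-small basic Hamiltonian $F' \colon S^1 \times M \to \R$ of the form $F'(t,x) = f(t)$ generating a strictly contact loop $\Phi_{F'}$, and set $\Phi_G = \Phi_{F'}^{-1} \circ \Phi_H$. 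By Lemma~\ref{lem:contact-ham}, this isotopy is generated by $G_t = (H_t - F'_t) \circ \phi_{F'}^t$, where the conformal factor of $\Phi_{F'}$ drops out because $F'$ is basic. The function $G_t$ is basic because the difference of two basic functions is basic, and precomposition by a strictly contact diffeomorphism preserves the Reeb vector field and hence basicness. Since $\Phi_{F'}$ is a loop, $\Phi_G$ has the same endpoints as $\Phi_H$; choosing $F'$ sufficiently small gives $\dbar(\Phi_G, \phi_G^0) < \dbar(\Phi_H, \phi_H^0) + \epsilon/3$ and $\|G\|_\oneinfty < \|H\|_\oneinfty + \epsilon/3$. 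Moreover $\|G_t\| \neq 0$ for all $t$ outside a finite set $\{t_1, \ldots, t_k\}$.

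The main obstacle, and principal departure from the proof of Lemma~\ref{lem:tech-lem}, is the reparameterization step. The inverse of the function $\eta$ in equation~(\ref{eqn:eta}) applied to $G$ would have a derivative that blows up at the points $\eta(t_i)$ where $\|G\|$ vanishes, so that $\zeta$ fails to be $C^1$. To circumvent this I would replace $\|G_s\|$ by the strictly positive continuous function $\|G_s\| + \delta$ for a small $\delta > 0$, and define
\begin{align*}
    \eta_\delta(t) = \frac{\int_0^t (\|G_s\| + \delta) \, ds}{\int_0^1 (\|G_s\| + \delta) \, ds}, \qquad \zeta_\delta = \eta_\delta^{-1}.
\end{align*}
Then $\zeta_\delta$ is a $C^1$-diffeomorphism of $[0,1]$ fixing the endpoints, and a direct computation using $\zeta_\delta'(t) = (\|G\|_\oneinfty + \delta)/(\|G_{\zeta_\delta(t)}\| + \delta)$ yields the pointwise bound
\begin{align*}
    \|G^{\zeta_\delta}_t\| = \zeta_\delta'(t) \cdot \|G_{\zeta_\delta(t)}\| = \frac{(\|G\|_\oneinfty + \delta) \cdot \|G_{\zeta_\delta(t)}\|}{\|G_{\zeta_\delta(t)}\| + \delta} \leq \|G\|_\oneinfty + \delta.
\end{align*}

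Finally, approximate $\zeta_\delta$ in the $C^1$-topology by a smooth diffeomorphism $\rho$ of $[0,1]$ fixing $0$ and $1$, and set $F = G^\rho$. Then $F$ is smooth and basic, since reparameterization preserves both properties; the isotopy $\Phi_F$ traces the same path in $\Diff(M,\alpha)$ as $\Phi_G$, so its endpoints match those of $\Phi_H$ and $\dbar(\Phi_F, \phi_F^0) = \dbar(\Phi_G, \phi_G^0)$; and by Lemma~\ref{lem:zeta-est} the norm difference between $G^\rho$ and $G^{\zeta_\delta}$ is controlled by the $C^1$-distance between $\rho$ and $\zeta_\delta$. Taking $\delta$ and the $C^1$-approximation error sufficiently small then yields (i)--(iii).
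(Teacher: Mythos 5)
Your proposal is correct, and while it follows the same overall outline as the paper's proof (boundary flattening via Lemma~\ref{lem:approx-bdy-flat}, regularization via Proposition~\ref{pro:strictly-regular}, reparameterization, smooth approximation, all with the observation that basicness is preserved at each stage), it handles the crucial reparameterization step by a genuinely different device. The paper excises small intervals $[t_i - \delta, t_i + \delta]$ around the finitely many zeros of $\| G_t \|$, leaves the parameterization at unit speed there, and on the complementary subintervals reparameterizes to the constant speed $A$ defined by a renormalized average of $\| G_s \|$; one then checks that $\| G_t^\zeta \| \le A$ holds throughout and that $A < \int_0^1 \| G_s \| \, ds + \epsilon/3$. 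You instead add a constant $\delta > 0$ to the integrand, so that $\| G_s \| + \delta$ is strictly positive and the global formula $\zeta_\delta = \eta_\delta^{-1}$ produces a single $C^1$-diffeomorphism of $[0,1]$, and the required bound follows from the one-line estimate
\[
\| G^{\zeta_\delta}_t \| = \frac{(\| G \|_\oneinfty + \delta) \cdot \| G_{\zeta_\delta (t)} \|}{\| G_{\zeta_\delta (t)} \| + \delta} \le \| G \|_\oneinfty + \delta .
\]
This is shorter and avoids the piecewise gluing and the renormalization constant $A$ entirely, and it makes the step a more direct perturbation of the argument in Lemma~\ref{lem:tech-lem}; the price is only that you no longer reparameterize the regular part of the isotopy to exactly constant speed, which is anyway not needed. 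Your bookkeeping of basicness through each step (the loop has vanishing conformal factor, $H_t - F'_t$ is basic, precomposition by a strictly contact diffeomorphism and rescaling by $\zeta'_\delta(t)$ preserve basicness) is correct and matches the content of the paper's argument, even though you compose the loop on the opposite side ($\Phi_G = \Phi_{F'}^{-1} \circ \Phi_H$ versus $\{ \phi_t \circ \phi_H^t \}$), which changes nothing.
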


\begin{proof}
By Lemma~\ref{lem:approx-bdy-flat}, we may again assume that the isotopy $\Phi_H$ is boundary flat, and its basic Hamiltonian can be considered as a function from $S^1 \times M$ to $\R$.
Let $\{ \phi_G^t \} = \{ \phi_t \circ \phi_H^t \}$, where $\{ \phi_t \}$ is a loop with $\phi_0 = \phi_1 = \id$ in $\Diff (M,\alpha)$.
Clearly $\phi_G^0 = \phi_H^0$ and $\phi_G^1 = \phi_H^1$.
By Proposition~\ref{pro:strictly-regular}, we may choose the loop $\{ \phi_t \}$ so that it is arbitrarily close to the constant loop $\id$ in the $C^\infty$-metric, and $\| G_t \| \not= 0$ except at finitely many points $0 < t_1 < \ldots < t_k < 1$ in $S^1 = \R / \Z$.
In particular, $\dbar (\Phi_G, \phi_G^0) < \dbar (\Phi_H, \phi_H^0) + \epsilon$ and $\| G \|_\oneinfty < \| H \|_\oneinfty + \frac{\epsilon}{3}$.
For convenience, denote $t_0 = 0$ and $t_{k + 1} = 1$.
There exists a constant
	\[ 0 < \delta < \frac{1}{2} \cdot \max_{0 \le i \le k} ( t_{i + 1} - t_i), \]
such that $\| G_t \| < \int_0^1 \| G_s \| \, ds$ if $| t - t_i | < \delta$ for some $1 \le i \le k$, and
	\[ \int_0^1 \| G_s \| \, ds < A = \frac{\int_0^1 \| G_s \| \, ds - \sum_{i = 1}^k \left( \int_{t_i - \delta}^{t_i + \delta} \| G_s \| \, ds \right)}{1 - k \cdot 2 \delta} < \int_0^1 \| G_s \| \, ds + \frac{\epsilon}{3}. \]
In each of the subintervals $[0, t_1 - \delta]$, $[t_i + \delta, t_{i + 1} - \delta]$, and $[t_k + \delta, 1]$, the strictly contact isotopy is regular, and we may reparameterize as in the proof of Lemma~\ref{lem:tech-lem} with the denominator in (\ref{eqn:eta}) replaced by the constant $A$, so that
	\[ \| G_t^{\zeta_i} \| = \zeta'_i (t) \cdot \| G_{\zeta_i (t)} \| = A < \int_0^1 \| G_s \| \, ds + \frac{\epsilon}{3}. \]
That gives rise to a continuous reparameterization function $\zeta \colon [0,1] \to [0,1]$ that fixes the two end points, and is linear with slope equal to one in the intervals $[t_i - \delta, t_i + \delta]$.
Then $\zeta$ is piecewise $C^1$-smooth, and can be approximated by a smooth reparameterization function $\rho \colon [0,1] \to [0,1]$ that fixes both end points, such that
	\[ \| G_t^\rho \| < \| G_t^\zeta \| + \frac{\epsilon}{3} = \zeta' (t) \cdot G_{\zeta (t)} + \frac{\epsilon}{3} \le A + \frac{\epsilon}{3} < \int_0^1 \| G_s \| \, ds + \frac{2 \epsilon}{3}. \]
Therefore $\| G^\rho \|_\infty < \| H \|_\oneinfty + \epsilon$, and setting $F = G^\rho$ completes the proof.
\end{proof}

In fact, the proof goes through without the hypothesis that $H$ is basic.
In that case, the conformal factor $f$ of $F$ satisfies the relation $f_t = h_{\rho (\zeta (t))}$, where $\zeta$ is as in Lemma~\ref{lem:approx-bdy-flat}, and $\rho$ is the smooth function defined in the proof of Lemma~\ref{lem:strictly-tech-lem}.

\section{Proof of the Main Lemma} \label{sec:proof-main}
The main lemma and its proof are inspired by their Hamiltonian counterparts in \cite{mueller:ghh08, mueller:ghl08}.
See these references for a detailed commentary on the proof.

\begin{proof}[Proof of Lemma~\ref{lem:main}]
By definition, there exist smooth contact dynamical systems $(\Phi_{H_i}, H_i, h_i)$, such that
	\[ \dbar (\Phi_H, \Phi_{H_i}) \to 0, \ \ \| H - H_i \|_\oneinfty \to 0, \ \ \mbox{and} \ \ | h - h_i | \to 0 \]
as $i \to \infty$.
Write $\phi_i = \phi_{H_i}^1$ and $\phi = \phi_H^1$.
By Lemma~\ref{lem:approx-bdy-flat}, we may assume without loss of generality that each isotopy $\Phi_{H_i}$ is boundary flat, and their Hamiltonians can be considered as smooth functions $H_i \colon S^1 \times M \to \R$.
We will modify the sequence $(\Phi_{H_i}, H_i, h_i)$ in several steps.
As per usual, a Hamiltonian will be denoted by an upper case Roman letter, and the conformal factor of the generated contact isotopy by the corresponding lower case letter.
For brevity, we often suppress the dependence on the time variable.

Let $\epsilon_i > 0$ be a decreasing sequence of real numbers.
Since $\phi \colon M \to M$ is uniformly continuous, there exists a sequence $\delta_i > 0$, so that $d (\phi (x), \phi (y) ) < \epsilon_i$, for all $x, y \in M$ with $d (x, y) < \delta_i$.
The function $h$ on $M$ is continuous as well, so by making the positive numbers $\delta_i$ smaller if necessary, we may in addition assume $| h (t, x) - h (t, y) | < \epsilon_i$, for all $0 \le t \le 1$, and all $x, y \in M$ with $d (x, y) < \delta_i$.
By again making $\delta_i$ smaller if necessary, we may impose $\delta_i \le \epsilon_i$.

Step 1.
For notational convenience, write $(\Phi_{H_0}, H_0, h_0) = (\id, 0, 0)$.
Then define a sequence $K_i$ of smooth Hamiltonians by
	\[ K_{i + 1} = \left( e^{h_i^1} \cdot (\Hbar_i \# H_{i + 1}) \right) \circ \phi_i^{-1} = \left( e^{h_i^1 - h_i} \cdot \left( (H_{i + 1} - H_i) \circ \Phi_{H_i} \right) \right) \circ \phi_i^{-1} \]
for $i \ge 0$, generating the smooth contact isotopies
	\[ \Phi_{K_{i + 1}} = \phi_i \circ \Phi_{H_i}^{-1} \circ \Phi_{H_{i + 1}} \]
from $\phi_i$ to $\phi_{i + 1}$ (see the remark in section~\ref{sec:contact-geom}), and with conformal factors given by
	\[ k_{i + 1}^t = \left( h_i^1 - h_i^t \right) \circ \left( (\phi_{H_i}^t)^{-1} \circ \phi_{H_{i + 1}}^t \right) + h_{i + 1}^t. \]
Here $\phi_i$ again denotes either the diffeomorphism itself or the corresponding constant isotopy.
By passing to a convergent subsequence of the sequence $(\Phi_{H_i}, H_i, h_i)$ if necessary, we may assume that for all $i$
	\[ \| K_{i + 1} \|_\oneinfty < 3 \cdot e^{| h_i^1 - h_i |} \cdot \| H_{i + 1} - H_i \|_\oneinfty < \epsilon_{i + 1}, \]
	\[ \dbar (\Phi_{H_{i + 1}}, \Phi_{H_i}) < \delta_{i + 1}, \ \ \dbar (\phi, \phi_i) < \epsilon_{i + 1} \ \ \mbox{and} \]
	\[ | h - h_i | < \epsilon_{i + 1}. \]
In the first line, we have used the straightforward inequalities $| \cdot | \le \| \cdot \| < 3 \, | \cdot |$ from \cite{ms:tcd11} for \emph{time-independent} functions on $M$, and that the sequence $| h_i^1 - h_i |$ is bounded.
Then
\begin{align*}
	\dbar ( \Phi_{K_{i + 1}}, \phi_i ) & = d (\Phi_{K_{i + 1}}, \phi_i) + d (\Phi_{K_{i + 1}}^{-1}, \phi_i^{-1}) \\
	& \le d (\phi_i, \phi) + d (\phi \circ \Phi_{H_i}^{-1} \circ \Phi_{H_{i + 1}}, \phi) + d (\phi, \phi_i)+ d (\Phi_{H_{i + 1}}^{-1} \circ \Phi_{H_i}, \id) \\
	& \le d (\phi_i, \phi) + d (\phi \circ \Phi_{H_i}^{-1}, \phi \circ \Phi_{H_{i + 1}}^{-1}) + d (\phi, \phi_i)+ d (\Phi_{H_{i + 1}}^{-1}, \Phi_{H_i}^{-1}) \\
	& \le 4 \epsilon_{i + 1}.
\end{align*}
Moreover, the conformal factors of the contact isotopy $\Phi_{K_{i + 1}}$ and of the contact diffeomorphism $\phi_{K_{i + 1}}^0 = \phi_i = \phi_{H_i}^1$ differ by at most
\begin{align*}
	| k_{i + 1} - k_{i + 1}^0 | & \le | h_i^1 - h^1 | + | h^1 \circ ((\Phi_{H_i})^{-1} \circ \Phi_{H_{i + 1}}) - h^1 | + | h^1 - h_i^1 | \\
	& \ \ \ \ + \, | h_i - h | + | h - h \circ ((\Phi_{H_i})^{-1} \circ \Phi_{H_{i + 1}}) | + | h - h_{i + 1} | < 6 \epsilon_{i + 1}.
\end{align*}
In the present situation, we choose $\epsilon_i = \frac{1}{3} \cdot (\frac{1}{2})^{2 i - 1}$.

Step 2.
Applying Lemma~\ref{lem:tech-lem} to each function $K_i$ yields a sequence of smooth Hamiltonians $L_i$, such that the end points of the contact isotopies coincide, that is, $\phi_{L_i}^0 = \phi_{K_i}^0 = \phi_{i - 1}$, $\phi_{L_i}^1 = \phi_{K_i}^1 = \phi_i$, and moreover,
	\[ \| L_i \|_\infty < \| K_i \|_\oneinfty + \epsilon_i < 2 \epsilon_i, \]
	\[ \dbar (\Phi_{L_i}, \phi_{i - 1}) < \dbar (\Phi_{K_i}, \phi_{i - 1}) + \epsilon_i \le 5 \epsilon_i, \ \ \mbox{and} \]
	\[ | l_i - l_i^0 | < | k_i - k_i^0 | + \epsilon_i < 7 \epsilon_i. \]

Step 3.
Using Lemma~\ref{lem:approx-bdy-flat} to reparameterize the Hamiltonians $L_i$, we obtain boundary flat Hamiltonians $M_i$ with the same end points $\phi_{i - 1}$ and $\phi_i$, and
	\[ \| M_i \|_\infty \le \| L_i \|_\infty + \epsilon_i < 3 \epsilon_i. \]
Moreover, since the contact isotopy $\Phi_{M_i}$ is a reparameterization of the isotopy $\Phi_{L_i}$, we have
\begin{align} \label{eqn:dist-constant-iso}
	\dbar (\Phi_{M_i}, \phi_{i - 1}) = \dbar (\Phi_{L_i}, \phi_{i - 1}) \le 5 \epsilon_i, \ \ \mbox{and}
\end{align}
	\[ | m_i - m_i^0 | = | l_i - l_i^0 | < 7 \epsilon_i. \]
For later reference, observe that for the conformal factors $m_i^0 = l_i^0 = k_i^0 = h_{i - 1}^1$.

Step 4.
Let $t_i = 1 - (\frac{1}{2})^i$ for all $i \ge 1$.
In particular, $0 = t_0 < t_1 < t_2 < \ldots < 1$.
Then define a sequence of smooth boundary flat reparameterizations
	\[ N_i = M_i^{t_{i - 1}, t_i} \colon [t_{i - 1}, t_i] \times M \to \R \]
as in section~\ref{sec:rep}.
By equation (\ref{eqn:interval-change}), we have
\begin{align} \label{eqn:decreasing-norm}
	\| N_i \|_\infty = \frac{1}{t_i - t_{i - 1}} \cdot \| M_i \|_\infty = 2^i \cdot \| M_i \|_\infty < 3 \cdot 2^i \cdot \epsilon_i = \frac{1}{2^{i - 1}}.
\end{align}

Step 5.
Define a sequence of smooth contact dynamical systems $(\Phi_{F_i}, F_i, f_i)$ as follows.
Let $\Phi_{F_1} = \Phi_{N_1}$ in the interval $[t_0, t_1]$, and the constant isotopy $\Phi_{F_1} = \phi_1$ in the remaining interval $[t_1, 1]$.
Then for $i > 1$, define recursively
\begin{align*}
	\Phi_{F_i} & = \Phi_{F_{i - 1}} \ \ \mbox{in the interval} \ [0, t_{i - 1}], \\
	\Phi_{F_i} & = \Phi_{N_i} \ \ \mbox{in the interval} \ [t_{i - 1}, t_i], \ \mbox{and} \\
	\Phi_{F_i} & = \phi_i \ \ \mbox{in the interval} \ [t_i, 1].
\end{align*}
The contact isotopies $\Phi_{F_i}$ are obviously continuous.
Due to the boundary flatness of the functions $N_i$, they are in fact smooth.
For $i < j$, the isotopies $\Phi_{F_i}$ and $\Phi_{F_j}$ agree everywhere except in the interval $[t_i, 1]$.
Since both isotopies are constant in the interval $[t_j, 1]$, their maximum distance with respect to the $C^0$-metric is achieved in the interval $[t_i, t_j]$.
In that interval, $\Phi_{F_i}$ is equal to the constant isotopy $\phi_i$, while $\Phi_{F_j}$ at each time coincides with the contact isotopy $\Phi_{N_k}$ from $\phi_{k - 1}$ to $\phi_k$ for some $i < k \le j$.
By equation~(\ref{eqn:dist-constant-iso}),
	\[ \dbar (\Phi_{F_i}, \Phi_{F_j}) = \max_{i < k \le j} \dbar (\phi_i, \Phi_{N_k}) \le \max_{i < k \le j} \dbar (\phi_i, \phi_{k - 1}) + \max_{i < k \le j} \dbar (\phi_{k - 1}, \Phi_{N_k}) < 7 \epsilon_i < \frac{1}{2^i} \]
for $i > 2$, and therefore $\dbar (\Phi_{F_i}, \Phi_{F_j}) \to 0$, as $i, j \to \infty$.

For the sequence $F_i$ of smooth Hamiltonians, we have $F_1 = N_1$ in the interval $[t_0, t_1]$, $F_1 = 0$ in the interval $[t_1, 1]$, and for $i > 1$,
\begin{align*}
		F_i & = F_{i - 1} \ \ \mbox{in the interval} \ [0, t_{i - 1}], \\
		F_i & = N_i \ \ \mbox{in the interval} \ [t_{i - 1}, t_i], \ \mbox{and} \\
		F_i & = 0 \ \ \mbox{in the interval} \ [t_i, 1].
\end{align*}
These Hamiltonians $F_i$ are indeed smooth, due to boundary flatness of the functions $N_i$.
By the same argument as above, for $i < j$
	\[ \| F_i - F_j \|_\infty = \max_{i < k \le j} \| N_k \|_\infty < \frac{1}{2^i} \]
by equation~(\ref{eqn:decreasing-norm}), and thus $\| F_i - F_j \|_\infty \to 0$, as $i, j \to \infty$.

Since the isotopies $\Phi_{F_i}$ and $\Phi_{F_j}$ agree everywhere except in the interval $[t_i, 1]$, and are both constant in the interval $[t_j, 1]$, the difference of their conformal factors also attains its maximum in the interval $[t_i, t_j]$.
In fact,
\begin{align*}
	| f_i - f_j | & \le \max_{i < k \le j} | m_i^1 - m_k | \\
	& \le \max_{i < k \le j} | m_{i + 1}^0 - m_k^0 | + \max_{i < k \le j} | m_k^0 - m_k | \\
	& = \max_{i < k \le j} | h_i^1 - h_{k - 1}^1 | + \max_{i < k \le j} | m_k^0 - m_k | \\
	& < 9 \epsilon_i < \frac{1}{2^i}
\end{align*}
for $i > 2$.

That proves the sequence $(\Phi_{F_i}, F_i, f_i)$ is Cauchy with respect to the $L^\infty$-contact metric, and the limit $(\Phi_F, F, f)$ is a continuous contact dynamical system.
The time-one map is $\phi_F^1 = \lim_i \phi_{H_i}^1 = \phi_H^1$.
By construction, the isotopy $(\Phi_F, F, f)$ is smooth except possibly at time $t = 1$.

To prove the inequalities in equation~(\ref{eqn:small-perturbation}), in the above construction replace the Hamiltonian $F_{i_0}$ by a boundary flattening $H'_{i_0}$ of the Hamiltonian $H_{i_0}$ for a sufficiently large fixed index $i_0$, so that
	\[ \dbar (\Phi_{H'_{i_0}}, \Phi_{H_{i_0}}) < \frac{\epsilon}{3}, \ \ | h'_{i_0} - h_{i_0} | < \frac{\epsilon}{3}, \ \ \mbox{and} \ \ \| H'_{i_0} - H_{i_0} \|_\oneinfty < \frac{\epsilon}{3}. \]
Then
	\[ \dbar (\Phi_H, \Phi_F) \le \dbar (\Phi_H, \Phi_{H_{i_0}}) + \dbar (\Phi_{H_{i_0}}, \Phi_{H'_{i_0}}) + \dbar (\Phi_{H'_{i_0}}, \Phi_F) \le \frac{\epsilon}{3} + \frac{\epsilon}{3} + \sum_{i = i_0}^\infty \frac{1}{2^i} < \epsilon, \]
provided $i_0 = i_0 (\epsilon)$ is chosen sufficiently large, and similarly for the other estimates in equation~(\ref{eqn:small-perturbation}).
For the other inequalities~(\ref{eqn:close-to-id}), instead define $H'_{i_0}$ by applying Lemma~\ref{lem:tech-lem} to the above isotopy $\Phi_{H_{i_0}}$.
\end{proof}

\begin{proof}[Proof of Lemma~\ref{lem:strictly-main}]
The proof is almost verbatim the same as the one of the Main Lemma~\ref{lem:main}, except that in step~2 the reference to Lemma~\ref{lem:tech-lem} is to be replaced by a reference to Lemma~\ref{lem:strictly-tech-lem}.
Then all conformal factors in the construction are zero, and the resulting smooth contact dynamical systems $(\Phi_{F_i}, F_i, 0)$ are strictly contact.
\end{proof}

\section{The energy-capacity inequality and a bi-invariant metric} \label{sec:energy-capacity}
In this section we show how the energy-capacity inequality from \cite{ms:tcd11} gives rise to a bi-invariant metric on the group of strictly contact homeomorphisms.
This metric is similar to the Hofer metric on the groups of Hamiltonian diffeomorphisms \cite{hofer:tps90, lalonde:gse95} and Hamiltonian homeomorphisms \cite{oh:ghh10, mueller:ghh08, mueller:ghl08}, and the bi-invariant metric on the group of strictly contact diffeomorphisms \cite{banyaga:lci06, ms:tcd11}.

\begin{dfn}[Contact energy]
The \emph{contact energy} $E (\phi)$ of a strictly contact homeomorphism $\phi \in \Homeo (M,\alpha)$ is by definition the non-negative number
\begin{align} \label{eqn:energy}
	E (\phi) = \inf_{H \mapsto \phi} \| H \|,
\end{align}
where the infimum is taken over all topological strictly contact dynamical systems $(\Phi, H)$ with time-one map $\phi_H^1 = \phi$.
\end{dfn}

As in the case of the contact energy of strictly contact diffeomorphisms studied in \cite{banyaga:lci06, ms:tcd11}, symmetry and conjugation invariance of the contact energy, as well as the triangle inequality, follow from the group identities and transformation law for topological strictly contact dynamical systems.

\begin{pro}
The contact energy $E (\phi)$ satisfies the following properties.
For $\phi$ and $\psi \in \Homeo (M,\alpha)$, and $\varphi \in \Aut (M,\alpha)$, we have
\begin{align*}
	& \mbox{(symmetry)} & E (\phi^{-1}) = E(\phi), \\
	& \mbox{(conjugation invariance)} & E (\varphi \circ \phi \circ \varphi^{-1}) = E (\phi), \ \mbox{and} \\
	& \mbox{(triangle inequality)} & E (\phi \circ \psi) \le E (\phi) + E (\psi).
\end{align*}
In particular, $E (\phi^{-1} \circ \psi) = E (\psi^{-1} \circ \phi)$ by symmetry, and $E (\phi \circ \psi) = E (\psi \circ \phi)$ by conjugation invariance.
\end{pro}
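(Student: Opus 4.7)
The plan is to establish each of the three properties by exhibiting, for every topological strictly contact dynamical system representing the target element, a competitor for the infimum on the other side, and to verify that the norm estimate is preserved by the relevant operations. The key algebraic input consists of the group identities in Theorem~\ref{thm:topo-gp} specialized to the strictly contact case (conformal factor $h \equiv 0$), together with the transformation law in Theorem~\ref{thm:trafo-law} for strictly contact dynamical systems. The key analytic input is that strictly contact diffeomorphisms preserve the canonical volume form $\nu_\alpha$, a property that passes to topological automorphisms of $\alpha$ by the weak convergence of push-forward measures under $C^0$-limits (as in the push-forward lemma in section~\ref{sec:topo-auto}), so that every $\varphi \in \Aut(M,\alpha)$ satisfies $(\varphi^{-1})_* \nu_\alpha = \nu_\alpha$ and hence preserves both the oscillation and the mean value of any continuous function on $M$.

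For symmetry, I would start from an arbitrary topological strictly contact dynamical system $(\Phi_H, H)$ with $\phi_H^1 = \phi$ and apply Lemma~\ref{lem:contact-ham} with $h \equiv 0$ to get $\Hbar \mapsto \Phi_H^{-1}$ with $\Hbar_t = -H_t \circ \phi_H^t$. Since each $\phi_H^t$ is a strictly contact homeomorphism, hence volume-preserving, the identity $\|\Hbar_t\| = \|H_t\|$ holds for every $t$ and therefore $\|\Hbar\| = \|H\|$ in either the $L^\oneinfty$ or the $L^\infty$ norm. Taking infima over $H \mapsto \phi$ yields $E(\phi^{-1}) \le E(\phi)$, and the reverse inequality follows by replacing $\phi$ by $\phi^{-1}$.

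For the triangle inequality, given $H \mapsto \phi$ and $F \mapsto \psi$, I would use the concatenation formula from Lemma~\ref{lem:contact-ham}, which in the strictly contact case reduces to $(H \# F)_t = H_t + F_t \circ (\phi_H^t)^{-1}$, generating the topological strictly contact isotopy $\Phi_H \circ \Phi_F$ with time-one map $\phi \circ \psi$. The triangle inequality for $\|\cdot\|$ at each fixed $t$ together with invariance under composition with the volume-preserving homeomorphism $(\phi_H^t)^{-1}$ yields $\|H \# F\| \le \|H\| + \|F\|$, and passing to the infimum gives $E(\phi \circ \psi) \le E(\phi) + E(\psi)$.

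For conjugation invariance, given $\varphi \in \Aut(M,\alpha)$ and $H \mapsto \phi$, the transformation law states that $\varphi^{-1} \circ \Phi_H \circ \varphi$ is a topological strictly contact dynamical system with Hamiltonian $H \circ \varphi$ and time-one map $\varphi^{-1} \circ \phi \circ \varphi$. Since $\varphi$ preserves $\nu_\alpha$ in the sense above, $\|H \circ \varphi\|_t = \|H_t\|$ for every $t$ (oscillation is preserved by any homeomorphism; mean value is preserved by $\varphi$), whence $\|H \circ \varphi\| = \|H\|$ and $E(\varphi^{-1} \circ \phi \circ \varphi) \le E(\phi)$. Applying the same reasoning to $\varphi^{-1} \in \Aut(M,\alpha)$ gives equality and, after relabeling, the stated identity $E(\varphi \circ \phi \circ \varphi^{-1}) = E(\phi)$. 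The only place that requires care is this measure-preservation step for $\varphi \in \Aut(M,\alpha)$, since it is purely topological rather than smooth; I expect this to be the main conceptual point, but it is immediate from the cited push-forward lemma once one specializes the conformal factors $h_i$ of the approximating strictly contact diffeomorphisms to zero. The final two consequences noted in the proposition are then immediate: $E(\phi^{-1} \circ \psi) = E((\phi^{-1} \circ \psi)^{-1}) = E(\psi^{-1} \circ \phi)$ by symmetry, and $E(\phi \circ \psi) = E(\psi^{-1}(\phi \circ \psi)\psi) = E(\psi \circ \phi)$ by conjugation invariance with $\varphi = \psi^{-1}$.
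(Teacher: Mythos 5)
Your proposal is correct and follows exactly the route the paper indicates (the paper's proof is one sentence, deferring to the group identities and the transformation law, exactly the tools you deploy). You correctly identify the one nontrivial point that the paper leaves implicit: the mean-value term $|c(\cdot)|$ in the norm $\| \cdot \| = \osc(\cdot) + |c(\cdot)|$ is preserved under pre- or post-composition by elements of $\Aut(M,\alpha)$ and $\Homeo(M,\alpha)$ because these preserve the measure induced by $\nu_\alpha$, which follows from the push-forward lemma in section~\ref{sec:topo-auto} with $h_i \equiv 0$.
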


For the proof of non-degeneracy, recall the following key result from \cite{ms:tcd11}.

\begin{thm}[Energy-capacity inequality \cite{ms:tcd11}] \label{thm:energy-capacity-ineq}
Suppose the time-one map $\phi^1_H \in \Diff_0 (M,\xi)$ of a smooth Hamiltonian $H \colon [0,1] \times M \to \R$ displaces a compact subset $K \subset M$ with non-empty interior.
Then there exists a constant $C > 0$ that depends only on $K$ and $\alpha$, but is independent of the contact isotopy $\{ \phi_H^t \}$, its conformal factor $h \colon [0,1] \times M \to \R$ given by $(\phi_H^t)^* \alpha = e^{h_t} \alpha$, and the Hamiltonian $H$, such that
	\[ \| H \| \ge C e^{- | h |} > 0. \]
In particular, if $\phi \in \Diff_0 (M,\alpha)$, then $\| H \| > C > 0$ for every basic function $H$ that generates the time-one map $\phi_H^1 = \phi$.
\end{thm}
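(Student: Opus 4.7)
The plan is to reduce this contact energy-capacity estimate to a Hofer-type energy-capacity inequality for compactly supported Hamiltonians on the symplectization $(\widehat M, \widehat\omega) = (M \times \R, -d(e^\theta \alpha))$ of $(M,\alpha)$. The correspondence between contact and Hamiltonian dynamics recalled in sections~\ref{sec:topo-contact-dyn} and~\ref{sec:topo-auto} lifts the contact isotopy $\Phi_H$ with conformal factor $h$ to the Hamiltonian isotopy generated by $\widehat H_t(x,\theta) = e^\theta H_t(x)$, whose time-$t$ map is $\hphi_H^t(x,\theta) = (\phi_H^t(x),\, \theta - h_t(x))$.

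First I would transfer the displacement hypothesis to $\widehat M$. Set $\widehat K = K \times [-\tfrac12, \tfrac12]$; this is a compact subset of $\widehat M$ with nonempty interior, and the lift $\hphi_H^1$ displaces $\widehat K$ because its $M$-component sends every point of $\widehat K$ into $\phi_H^1(K)$, which is disjoint from $K$. Moreover, the entire trajectory $\{\hphi_H^t(\widehat K)\}_{t \in [0,1]}$ remains in the compact slab $\widehat V = M \times [-\tfrac12 - |h|,\, \tfrac12 + |h|]$, directly from the defining formula for the lift and the maximum norm of $h$.

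Next I would replace $\widehat H$ by a compactly supported Hamiltonian $F$ with the same displacement property. Choose a smooth cutoff $\chi \colon \R \to [0,1]$ with $\chi \equiv 1$ on $[-\tfrac12 - |h|,\, \tfrac12 + |h|]$ and $\mathrm{supp}\,\chi \subset [-\tfrac12 - |h| - 1,\, \tfrac12 + |h| + 1]$, and set $F_t(x,\theta) = \chi(\theta)\, \widehat H_t(x,\theta)$. Since every trajectory of $\widehat H$ starting in $\widehat K$ stays inside $\widehat V \subset \{\chi \equiv 1\}$, the Hamiltonians $F$ and $\widehat H$ induce the same flow on $\widehat K$, so $\phi_F^1$ also displaces $\widehat K$. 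The Hofer energy-capacity inequality on the open symplectic manifold $\widehat M$ then yields
\[
\|F\|_{\mathrm{Hofer}} \ge e(\widehat K, \widehat\omega) > 0,
\]
where $e(\widehat K, \widehat\omega)$ is the displacement energy of $\widehat K$, a positive constant depending only on $K$ and $\alpha$.

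Finally I would compare norms. Writing $g(\theta) = \chi(\theta) e^\theta \ge 0$, we have $F_t(x,\theta) = g(\theta) H_t(x)$, and a short case analysis according to the sign of $H_t$, using that $F_t$ vanishes outside the support of $\chi$ so that $0$ always lies between $\min F_t$ and $\max F_t$, gives
\[
\osc F_t \;\le\; (\max g)\bigl(\osc H_t + |c(H_t)|\bigr) \;=\; (\max g)\, \|H_t\| \;\le\; e^{|h| + 3/2}\, \|H_t\|.
\]
Integrating (or taking the supremum) in $t$ yields $\|F\|_{\mathrm{Hofer}} \le e^{|h|+3/2} \|H\|$, and combining with the previous inequality produces $\|H\| \ge C e^{-|h|}$ with $C = e^{-3/2}\, e(\widehat K, \widehat\omega)$, which is the desired bound. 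The second assertion of the theorem is immediate, because $h \equiv 0$ for a strictly contact isotopy. The main technical obstacle is the oscillation comparison in the last step: one must verify that the oscillation norm on $\widehat M$ genuinely dominates the mean-value contribution $|c(H_t)|$ to $\|H_t\|$, and this is precisely where the non-constancy of $e^\theta$ on the slab $\widehat V$, together with the vanishing of $F_t$ outside the support of $\chi$, converts a nonzero mean value of $H_t$ into honest oscillation of $F_t$.
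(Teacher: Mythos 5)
This theorem is recalled in the present paper from \cite{ms:tcd11} without proof, so there is no argument here to compare against directly; what can be said is that your route is exactly the one suggested by the paper's dictionary between contact dynamics of $(M,\alpha)$ and Hamiltonian dynamics of its symplectization $(M \times \R, -d(e^\theta\alpha))$, and it is essentially correct. The chain of reductions is sound: the lift $\hphi_H^t(x,\theta)=(\phi_H^t(x),\theta-h_t(x))$ displaces $\widehat K = K \times [-\tfrac12,\tfrac12]$ and keeps it inside the slab $M \times [-\tfrac12-|h|,\tfrac12+|h|]$, the cutoff $F_t = \chi(\theta)e^\theta H_t(x)$ has the same flow on $\widehat K$, and then the positivity of the Hofer displacement energy of $\widehat K$ finishes the argument. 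The oscillation estimate $\osc F_t \le (\max g)\,\|H_t\|$ is the pivotal observation and is correct: whenever $H_t$ is single-signed, one extremum of $F_t$ equals $0$ (attained where $g=0$), and since the mean $c(H_t)$ lies between $\min H_t$ and $\max H_t$ the opposite extremum is controlled by $\osc H_t + |c(H_t)|$. This is precisely the point of including $|c(H_t)|$ in the contact norm $\|\cdot\|$, so that constant Hamiltonians (generating the Reeb flow, which certainly can displace) are not assigned zero energy. Two things you should make explicit rather than leave implicit: the positivity $e(\widehat K,\widehat\omega)>0$ is the deep input, valid here because the symplectization is an exact Liouville manifold to which the Lalonde--McDuff displacement-energy theorem applies (it is not the trivial fact that $\|F\|_{\rm Hofer} \ge e(\widehat K, \widehat\omega)$, which is just the definition of displacement energy); and one should verify that this gives the $L^\oneinfty$ version of the statement as written, taking $\|F\|_{\rm Hofer}=\int_0^1 \osc F_t\,dt$, so that integration of the pointwise-in-$t$ estimate is what is needed.
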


\begin{cor}[Energy-capacity inequality] \label{cor:energy-capacity-ineq}
Suppose $\phi^1_H \in \Homeo (M,\xi)$ is the time-one map of a topological Hamiltonian $H \colon [0,1] \times M \to \R$, and displaces a compact subset $K \subset M$ with non-empty interior.
Then
	\[ \| H \| \ge C e^{- | h |} > 0, \]
where the constant $C = C (K,\alpha) > 0$ is the same as the one in Theorem~\ref{thm:energy-capacity-ineq}.
In particular, this constant is independent of the topological Hamiltonian $H$, and of the topological contact isotopy $\{ \phi_H^t \}$ and topological conformal factor $h \colon [0,1] \times M \to \R$ corresponding to $H$.
If $\phi \in \Homeo (M,\alpha)$, then $\| H \| > C > 0$ for every topological Hamiltonian $H$ of a topological strictly contact isotopy with time-one map $\phi_H^1 = \phi$.
\end{cor}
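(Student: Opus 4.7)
The plan is to approximate the topological contact dynamical system by smooth ones, apply Theorem~\ref{thm:energy-capacity-ineq} to each smooth approximation with the \emph{same} compact set $K$, and then pass to the limit. The only non-trivial point is to show that the displacement of $K$ is preserved along the approximating sequence, so that the constant $C = C(K,\alpha)$ in the smooth energy-capacity inequality can be used uniformly.

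First, fix a sequence of smooth contact dynamical systems $(\Phi_{H_i}, H_i, h_i)$ that converges to $(\Phi_H, H, h)$ with respect to the $L^\oneinfty$-contact metric $d_\alpha$, so that in particular $\phi_{H_i}^1 \to \phi_H^1$ uniformly, $|h - h_i| \to 0$, and $\|H - H_i\| \to 0$. Since $K$ is compact with non-empty interior and $\phi_H^1$ is a homeomorphism displacing $K$, the two compact sets $\phi_H^1(K)$ and $K$ are disjoint, hence
	\[ \eta := \min_{x \in K} d(\phi_H^1(x), K) > 0. \]
For $i$ sufficiently large, $d(\phi_{H_i}^1, \phi_H^1) < \eta/2$, and therefore $\phi_{H_i}^1(K) \cap K = \emptyset$, i.e.\ the smooth contact diffeomorphism $\phi_{H_i}^1 \in \Diff_0(M,\xi)$ displaces the same compact set $K$.

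Next, Theorem~\ref{thm:energy-capacity-ineq} applies to each such $i$ with the constant $C = C(K,\alpha)$ that depends only on $K$ and $\alpha$, and yields
	\[ \|H_i\| \ge C\, e^{-|h_i|} > 0. \]
Passing to the limit as $i \to \infty$, using $\|H - H_i\| \to 0$ (so $\|H_i\| \to \|H\|$) and $|h - h_i| \to 0$ (so $e^{-|h_i|} \to e^{-|h|}$), gives the desired inequality $\|H\| \ge C e^{-|h|} > 0$. For the last statement, if $\phi \in \Homeo(M,\alpha)$ and $H$ is a topological Hamiltonian of a topological \emph{strictly} contact isotopy with $\phi_H^1 = \phi$, then by definition the corresponding topological conformal factor $h$ vanishes identically, so $\|H\| \ge C > 0$.

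No real obstacle is expected: the argument is a direct extension-by-limits argument once one observes that displacement is an open condition in the $C^0$-topology on homeomorphisms when $K$ is compact. The choice of norm ($L^\oneinfty$ versus $L^\infty$) is immaterial here, since the inequality is preserved under either limit; in the $L^\infty$-case one may instead work with a continuous contact dynamical system approximating $(\Phi_H, H, h)$, as provided by the Main Lemma~\ref{lem:main}.
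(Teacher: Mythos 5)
Your proof is correct and takes essentially the same route as the paper: approximate by smooth contact dynamical systems, observe that displacement of the compact set $K$ persists along the approximating sequence (so the same constant $C(K,\alpha)$ applies), invoke Theorem~\ref{thm:energy-capacity-ineq}, and pass to the limit. The paper compresses the displacement-persistence step to a phrase (``by compactness of $K$''), whereas you spell it out with the $\eta = \min_{x \in K} d(\phi_H^1(x), K) > 0$ argument; otherwise the two proofs are identical.
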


\begin{proof}
By definition, there exist smooth contact dynamical systems $(\Phi_{H_i}, H_i, h_i)$ that converge with respect to the contact metric $d_\alpha$ to the topological contact dynamical system $(\Phi_H, H, h)$.
Let $\epsilon > 0$.
By compactness of $K$, the time-one map $\phi_{H_i}^1$ displaces $K$ for $i$ sufficiently large, and by Theorem~\ref{thm:energy-capacity-ineq},
	\[ \| H \| > \| H_i \| - \epsilon \ge C e^{- | h_i |} - \epsilon > C e^{- | h | - \epsilon} - \epsilon, \]
for $i$ sufficiently large.
Since $\epsilon$ was arbitrary, the claim follows.
\end{proof}

\begin{cor} \label{cor:non-degeneracy}
The contact energy $E (\phi)$ of a strictly contact homeomorphism $\phi$ vanishes if and only if $\phi = \id$.
\end{cor}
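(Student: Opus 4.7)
The ``if'' direction is immediate: the trivial topological strictly contact dynamical system $(\id,0)$ generates the identity, so $E(\id)\le \|0\|=0$, and non-negativity of the norm gives $E(\id)=0$.

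For the ``only if'' direction, the plan is to reduce to the energy-capacity inequality. Suppose $\phi\in\Homeo(M,\alpha)$ is not the identity. First I would produce a compact displacement set: pick a point $p\in M$ with $\phi(p)\ne p$, and let $r=\tfrac{1}{2}d(p,\phi(p))>0$. Since $\phi$ is a homeomorphism, by uniform continuity there is a closed metric ball $K=\overline{B_\rho(p)}$ of radius $\rho>0$ small enough that $d(x,\phi(x))>r$ for all $x\in K$, so $\phi(K)\cap K=\emptyset$. The set $K$ is compact with non-empty interior, and $\phi=\phi_H^1$ displaces $K$ for every topological strictly contact dynamical system $(\Phi_H,H)$ with $\phi_H^1=\phi$.

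Next I would apply Corollary~\ref{cor:energy-capacity-ineq}. Crucially, because $\phi\in\Homeo(M,\alpha)$ and $(\Phi_H,H)$ is \emph{strictly} contact, its topological conformal factor is the zero function, so $|h|=0$ and the corollary gives
\[
\|H\|\ \ge\ C\,e^{-|h|}\ =\ C\ >\ 0,
\]
where the constant $C=C(K,\alpha)>0$ depends only on the displaced set $K$ and on $\alpha$, and in particular is independent of the choice of topological strictly contact isotopy generating $\phi$. Taking the infimum over all such topological Hamiltonians $H$ as in the definition~(\ref{eqn:energy}) of the contact energy yields $E(\phi)\ge C>0$, proving non-degeneracy.

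There is no serious obstacle here, since Corollary~\ref{cor:energy-capacity-ineq} does all the heavy lifting; the only point requiring minor care is the uniformity of the constant $C$ across all topological strictly contact isotopies with the given time-one map, and this uniformity is exactly what Corollary~\ref{cor:energy-capacity-ineq} delivers (the conformal factor being identically zero in the strictly contact case eliminates the exponential prefactor).
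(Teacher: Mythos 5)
Your proof is correct and takes essentially the same route the paper intends: the paper explicitly sets up Corollary~\ref{cor:energy-capacity-ineq} (including its final clause for $\phi\in\Homeo(M,\alpha)$) precisely so that non-degeneracy follows by producing a displaced compact set with non-empty interior and taking the infimum in the definition of $E(\phi)$. The only cosmetic remark is that when you choose the radius $\rho$ you should make explicit that you also need $2\rho<r$ so that the diameter bound on $K$ forces $\phi(K)\cap K=\emptyset$, but this is clearly subsumed in your ``small enough.''
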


\begin{cor}
The function
	\[ \Homeo (M,\alpha) \times \Homeo (M,\alpha) \rightarrow \R, \ (\phi, \psi) \mapsto E (\phi^{-1} \circ \psi) \]
defines a bi-invariant metric on the group of strictly contact homeomorphisms.
\end{cor}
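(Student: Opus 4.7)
The plan is to verify the axioms of a metric and then its left and right invariance, deducing everything from the already-stated properties of the contact energy $E$.

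First I would check the three metric axioms. Non-negativity is immediate since $E$ takes values in $[0,\infty)$ by definition. Symmetry follows from the symmetry property of $E$: writing $(\phi^{-1}\circ\psi)^{-1} = \psi^{-1}\circ\phi$, one has $E(\phi^{-1}\circ\psi) = E(\psi^{-1}\circ\phi)$. Non-degeneracy is the only point that uses contact-geometric input: by Corollary~\ref{cor:non-degeneracy}, $E(\phi^{-1}\circ\psi)=0$ forces $\phi^{-1}\circ\psi = \id$, i.e.\ $\phi=\psi$, and conversely $E(\id)=0$ since the constant isotopy with $H\equiv 0$ represents the identity. For the triangle inequality, given $\phi,\psi,\chi \in \Homeo(M,\alpha)$, observe that $\phi^{-1}\circ\chi = (\phi^{-1}\circ\psi)\circ(\psi^{-1}\circ\chi)$, and apply the triangle inequality for $E$.

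Next I would verify bi-invariance. Left invariance is straightforward: for any $\chi\in\Homeo(M,\alpha)$,
\[
	E((\chi\circ\phi)^{-1}\circ(\chi\circ\psi)) = E(\phi^{-1}\circ\chi^{-1}\circ\chi\circ\psi) = E(\phi^{-1}\circ\psi),
\]
with no appeal to conjugation invariance. Right invariance requires a little more:
\[
	E((\phi\circ\chi)^{-1}\circ(\psi\circ\chi)) = E(\chi^{-1}\circ(\phi^{-1}\circ\psi)\circ\chi),
\]
and this equals $E(\phi^{-1}\circ\psi)$ by the conjugation invariance property of $E$, since $\chi\in\Homeo(M,\alpha)\subset\Aut(M,\alpha)$ and $\Homeo(M,\alpha)$ is a group (so $\phi^{-1}\circ\psi\in\Homeo(M,\alpha)$).

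The only genuinely non-trivial input is non-degeneracy, which rests on the energy-capacity inequality (Corollary~\ref{cor:energy-capacity-ineq}) as packaged in Corollary~\ref{cor:non-degeneracy}; everything else is a formal manipulation using the group-theoretic identities for $E$ already established in the preceding proposition. So there is no main obstacle beyond citing the appropriate results in the right order.
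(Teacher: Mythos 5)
Your proof is correct and is exactly the formal verification the paper leaves implicit: it deduces the metric axioms and bi-invariance directly from the proposition's symmetry, conjugation invariance, and triangle inequality for $E$, together with non-degeneracy from Corollary~\ref{cor:non-degeneracy}. The decompositions you use (triangle inequality via $\phi^{-1}\circ\chi = (\phi^{-1}\circ\psi)\circ(\psi^{-1}\circ\chi)$, right invariance via conjugation by $\chi\in\Homeo(M,\alpha)\subset\Aut(M,\alpha)$) are the standard ones and match the intended argument.
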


Again it is possible to replace topological strictly contact dynamical systems by continuous strictly contact dynamical systems in all of the constructions and statements above.
Denote by $E_\oneinfty (\phi) =  \inf \| H \|_\oneinfty$ and $E_\infty (\phi) =  \inf \| H \|_\infty$ the a priori different contact energies that arise by taking the infimum in (\ref{eqn:energy}) over topological strictly contact dynamical systems and continuous strictly contact dynamical systems, respectively.

\begin{thm}
The equality $E_\oneinfty (\phi) = E_\infty (\phi)$ holds for every $\phi \in \Homeo (M,\alpha)$.
\end{thm}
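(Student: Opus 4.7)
The plan is to prove the two inequalities $E_\oneinfty(\phi) \le E_\infty(\phi)$ and $E_\infty(\phi) \le E_\oneinfty(\phi)$ separately, where the nontrivial direction is handled by invoking the version of the Main Lemma for strictly contact dynamical systems (Lemma~\ref{lem:strictly-main}).

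For the easy inequality $E_\oneinfty(\phi) \le E_\infty(\phi)$, I would observe that every continuous strictly contact dynamical system $(\Phi_F, F)$ is in particular a topological one, and that the pointwise estimate $\|F_t\|\,dt \le \max_t \|F_t\|$ yields $\|F\|_\oneinfty \le \|F\|_\infty$. Thus any $(\Phi_F, F)$ with $\phi_F^1 = \phi$ that is eligible in the definition of $E_\infty(\phi)$ is also eligible in the definition of $E_\oneinfty(\phi)$, with a smaller value of the norm. Taking the infimum over all such $(\Phi_F, F)$ gives the inequality.

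For the reverse direction, fix $\epsilon > 0$ and choose a topological strictly contact dynamical system $(\Phi_H, H)$ with $\phi_H^1 = \phi$ and $\|H\|_\oneinfty < E_\oneinfty(\phi) + \epsilon/2$. I would then apply Lemma~\ref{lem:strictly-main} in its second form to the system $(\Phi_H, H)$ with tolerance $\epsilon/2$, producing a continuous strictly contact dynamical system $(\Phi_F, F)$ with $\phi_F^1 = \phi_H^1 = \phi$ and satisfying $\|F\|_\infty < \|H\|_\oneinfty + \epsilon/2$. Since $(\Phi_F, F)$ is eligible in the definition of $E_\infty(\phi)$, one obtains
\[
E_\infty(\phi) \le \|F\|_\infty < \|H\|_\oneinfty + \epsilon/2 < E_\oneinfty(\phi) + \epsilon,
\]
and letting $\epsilon \to 0^+$ yields $E_\infty(\phi) \le E_\oneinfty(\phi)$.

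The only substantive input is Lemma~\ref{lem:strictly-main}, and the crucial feature is precisely the second estimate there: being able to replace an $L^\oneinfty$-small topological strictly contact Hamiltonian by an $L^\infty$-small continuous one with no loss on the time-one map. There is no genuine obstacle beyond correctly invoking this lemma, since the preservation of the time-one map is built into its conclusion and the strictly contact class is preserved by its proof. I would also note, as a cosmetic remark, that the same argument works verbatim to show the equality of the corresponding contact energies for $\Homeo(M,\xi)$ if one uses Lemma~\ref{lem:main} in place of Lemma~\ref{lem:strictly-main}, but only the strictly contact case is needed here.
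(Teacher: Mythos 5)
Your proposal is correct and follows exactly the same route as the paper: the inequality $E_\oneinfty(\phi) \le E_\infty(\phi)$ from the definitions, and the reverse from the second estimate in Lemma~\ref{lem:strictly-main}. The paper's proof is just a two-line statement of these two facts; you have spelled out the $\epsilon$-argument explicitly, which is fine.
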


\begin{proof}
The inequality $E_\oneinfty (\phi) \le E_\infty (\phi)$ follows from the definitions, while the reverse inequality $E_\oneinfty (\phi) \ge E_\infty (\phi)$ is a consequence of Lemma~\ref{lem:strictly-main}.
\end{proof}

In fact, the same argument proves that their smooth counterparts $E_\oneinfty$ and $E_\infty$ on the group of strictly contact diffeomorphisms \cite{banyaga:lci06, ms:tcd11}, defined as the infimums over all smooth strictly contact dynamical systems, coincide as well.

\section{Non-smooth contact homeomorphisms} \label{sec:non-smooth}
In \cite{mueller:ghh07, buhovsky:ugh10, banyaga:uch12}, the authors construct Hamiltonian homeomorphisms and strictly contact homeomorphisms that are not Lipschitz continuous, and thus not $C^1$-smooth, on any symplectic manifold and any regular contact manifold.
In this section we generalize these examples to contact homeomorphisms that are not Lipschitz continuous.
The topological contact dynamical system of standard $\R^{2 n + 1}$ constructed below is compactly supported, and by Darboux's theorem, can be considered as a topological contact dynamical system of any given contact form on an arbitrary contact manifold.

The homeomorphisms referenced in the preceding paragraph arise as rotations of a small ball in $\R^{2 n}$ and their lifts to the total space of a prequantization bundle.
The construction in this section is much more involved, due to the fact that a non-trivial contact isotopy that induces a rotation of a ball in $\R^{2 n}$ in the splitting $\R^{2 n} \times \R = \R^{2 n + 1}$ is not compactly supported, and unless the contact form is regular, a locally defined basic Hamiltonian can in general not be extended to a basic function on the entire manifold.
We begin our discussion with an example of a compactly supported smooth contact dynamical system of $\R^{2 n + 1}$ with its standard contact form.

\begin{exa} \label{exa:smooth-iso}
Denote by $(r_1, \ldots, r_n, \theta_1, \ldots, \theta_n, z)$ polar coordinates on $\R^{2 n + 1}$, where $r_i \ge 0$ and $0 \le \theta_i < 2 \pi$, and where $x_i = r_i \cos \theta_i$ and $y_i = r_i \sin \theta_i$ are rectangular coordinates on $\R^{2 n}$.
Let $\xi = \ker \alpha$ be the contact structure defined by the contact form
	\[ \alpha = dz + \frac{1}{2} \sum_{i = 1}^n (x_i \, dy_i - y_i \, dx_i) = dz + \frac{1}{2} \sum_{i = 1}^n r_i^2 \, d\theta_i, \]
which is diffeomorphic to the standard contact form $\alpha_{\rm std} = dz - \sum_{i = 1}^n y_i \, dx_i$.
Since $d\alpha = \sum_{i = 1}^n r_i \, dr_i \wedge d\theta_i$, the contact vector field $X_H$ associated to a smooth time-dependent Hamiltonian $H \colon [0,1] \times \R^{2 n + 1} \to \R$ is in polar coordinates given by
	\[ X_H = \sum_{i = 1}^n \left( \frac{1}{2} r_i \frac{\partial H_t}{\partial z} - \frac{1}{r_i} \frac{\partial H_t}{\partial \theta_i} \right) \frac{\partial}{\partial r_i} + \sum_{i = 1}^n \left( \frac{1}{r_i} \frac{\partial H_t}{\partial r_i} \right) \frac{\partial}{\partial \theta_i} + \left( H - \frac{1}{2} \sum_{i = 1}^n r_i \frac{\partial H_t}{\partial r_i} \right) \frac{\partial}{\partial z}, \]
provided $r_i > 0$ for $i = 1, \ldots, n$.

Let $\rho \colon [0,1] \to \R$ be a smooth function that is identically zero near $r = 1$, and $\eta \colon \R \to [0,1]$ be a compactly supported smooth function with $\eta (z) = 1$ near $z = 0$.
Define a compactly supported autonomous smooth Hamiltonian $H \colon \R^{2 n + 1} \to \R$ by
\begin{align} \label{eqn:ham-exa}
	H (r_1, \ldots, r_n, \theta_1, \ldots, \theta_n, z) = \eta (z) \int_r^1 s \rho (s) ds,
\end{align}
where $r = \sqrt{r_1^2 + \ldots + r_n^2}$, cf.\ \cite{mueller:ghh07, mueller:ghh08, buhovsky:ugh10, banyaga:uch12}.
The contact vector field corresponding to this Hamiltonian function is $X_H = Y_H - Z_H$, where
	\[ Y_H = \frac{1}{2} \eta' (z) \left( \int_r^1 s \rho (s) ds \right) \sum_{i = 1}^n r_i \frac{\partial}{\partial r_i} + \eta (z) \left( \frac{1}{2} r^2 \rho (r) + \int_r^1 s \rho (s) ds \right) \frac{\partial}{\partial z}, \]
and
	\[ Z_H = \eta (z) \rho (r) \sum_{i = 1}^n \frac{\partial}{\partial \theta_i}. \]
Denote by $\{ \phi_Y^t \}$ the smooth isotopy generated by the smooth vector field $Y_H$.
Then $(\phi_Y^t)_* (\frac{\partial}{\partial \theta_i}) = \frac{\partial}{\partial \theta_i}$ for $i = 1, \ldots, n$, and we can express
\begin{align*}
	X_H & = Y_H + \left( \phi_Y^t \right)_* \left( \left( \phi_Y^t \right)^{-1}_* (- Z_H) \right) \\
	& = Y_H + \left( \phi_Y^t \right)_* \left( - \eta \left( \z \circ \left( \phi_Y^t \right)^{-1} \right) \rho \left( \r \circ \left( \phi_Y^t \right)^{-1} \right) \sum_{i = 1}^n \frac{\partial}{\partial \theta_i} \right),
\end{align*}
where the maps $\r$ and $\z \colon \R^{2 n + 1} \to \R$ are defined by $\r (r_1, \ldots, r_n, \theta_1, \ldots, \theta_n, z) = \sqrt{r_1^2 + \ldots + r_n^2}$ and $\z (r_1, \ldots, r_n, \theta_1, \ldots, \theta_n, z) = z$.
Write $\{ \phi_{Y Z}^t \}$ for the smooth isotopy generated by the vector field $(\phi_Y^t)^{-1}_* (- Z_H)$.
The smooth contact isotopy generated by the smooth Hamiltonian $H = H_\rho$ is equal to the composition $\{ \phi_H^t \} = \{ \phi_Y^t \circ \phi_{Y Z}^t \}$.
Given the function $\rho$ one can choose the function $\eta$ so that
\begin{align} \label{eqn:rho-eta}
	\eta \left( z \pm \left( \frac{1}{2} r^2 \rho (r) + \int_r^1 s \rho (s) ds \right) \right) = 1
\end{align}
for $z$ near zero and for all $r$.
Conversely, given $\eta$ one can choose the function $\rho$ so that (\ref{eqn:rho-eta}) holds.
For later reference, we denote by $u = u (\rho, \eta) > 0$ the largest number such that (\ref{eqn:rho-eta}) holds for all $z \in \R$ with $| z | \le u$ and for all $r \ge 0$, and by $U = \R^{2 n} \times [- u, u] \subset \R^{2 n + 1}$ the corresponding neighborhood of the origin in $\R^{2 n + 1}$.
Then on the subset $U \subset \R^{2 n + 1}$, the isotopy is given by
\begin{align}
	& \phi_H^t (r_1, \ldots, r_n, \theta_1, \ldots, \theta_n, z) \label{eqn:iso-exa} \\
	& = \left (r_1, \ldots, r_n, \theta_1 - t \rho (r), \ldots, \theta_n - t \rho (r), z + t \left( \frac{1}{2} r^2 \rho (r) + \int_r^1 s \rho (s) ds \right) \right),\nonumber
\end{align}
and its inverse is $(\phi_{H_\rho}^t)^{-1} = \phi_{H_{-\rho}}^t$.
Denote as usual by $h \colon [0,1] \times \R^{2 n + 1} \to \R$ the conformal factor of the isotopy $\{ \phi_H^t \}$ determined by the identity $(\phi_H^t)^* \alpha = e^{h_t} \alpha$.
By equation~(\ref{eqn:conformal-factor}),
\begin{align} \label{eqn:conformal-factor-exa}
	h_t = \int_0^t \frac{\partial H}{\partial z} \circ \phi_H^s \, ds = \int_0^t \left( \eta' (\z) \int_\r^1 v \rho (v) dv \right) \circ \phi_H^s \, ds,
\end{align}
and the restriction of $h_t$ to $(\phi_H^s)^{-1} (U) \subset \R^{2 n + 1}$ vanishes for all $0 \le s \le 1$.
\end{exa}

For the remainder of this section, let $\rho \colon (0,1] \to \R$ be a smooth function that is identically zero near $r = 1$, and near $r = 0$ coincides with the function $r \mapsto r^{-a}$, where $0 < a < 2$.
Note that in contrast to the situation considered in the preceding example, this function $\rho$ does not extend smoothly or even continuously to the closed interval $[0,1]$.
Given a cut-off function $\eta$ as in the example, we can choose $\rho$ so that (\ref{eqn:rho-eta}) holds for a constant $u = u (\rho, \eta) > 0$, and vice versa.
In order to simplify the subsequent arguments, we assume without loss of generality that $\rho \ge 0$.
Choose a sequence of smooth functions $\rho_j \colon [0,1] \to \R$ indexed by the positive integers, such that $\rho_j (r) = \rho (r)$ for $r \ge \epsilon_j$, and $0 \le \rho_j (r) \le \rho_k (r) \le \rho (r)$, provided $k \ge j$, where $\epsilon_j$ is a decreasing sequence of positive numbers that converges to zero as $j \to \infty$.
Let $H_j = H_{\rho_j}$ be the corresponding sequence of compactly supported autonomous smooth Hamiltonians on $\R^{2 n + 1}$ defined as in equation~(\ref{eqn:ham-exa}).

\begin{lem}
The Hamiltonians $H_j$ converge uniformly to a compactly supported autonomous continuous function $H$ on $\R^{2 n + 1}$.
\end{lem}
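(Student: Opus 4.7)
The plan is to identify the candidate uniform limit explicitly and then dominate the tails using the integrability hypothesis $a < 2$. Define
\[
	H (r_1, \ldots, r_n, \theta_1, \ldots, \theta_n, z) = \eta (z) \int_r^1 s \rho (s) \, ds,
\]
where $r = \sqrt{r_1^2 + \cdots + r_n^2}$, with the understanding that at $r = 0$ we use the (improper) integral $\int_0^1 s \rho (s) \, ds$. Near $s = 0$ the integrand equals $s^{1 - a}$, and $1 - a > -1$ by hypothesis, so this integral is finite. The function $r \mapsto \int_r^1 s \rho (s) \, ds$ is continuous on $(0,1]$ and, by dominated convergence (or monotone convergence, since $\rho \ge 0$), extends continuously to $r = 0$. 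Composition with the continuous function $\eta$ and the continuous map $(x_1, y_1, \ldots, x_n, y_n, z) \mapsto (r, z)$ shows $H$ is continuous on all of $\R^{2 n + 1}$, and since $H$ is defined independently of $t$, it is autonomous.

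For uniform convergence, fix $j$ and a point $(r, \theta, z) \in \R^{2 n + 1}$. Since $\rho_j = \rho$ on $[\epsilon_j, 1]$ and $0 \le \rho_j \le \rho$ on $[0,1]$,
\[
	| H (r,\theta,z) - H_j (r,\theta,z) | = | \eta (z) | \cdot \left| \int_r^1 s \, (\rho (s) - \rho_j (s)) \, ds \right| \le \| \eta \|_\infty \cdot \int_0^{\epsilon_j} s \rho (s) \, ds,
\]
where the integral on the right is zero contribution from $[\epsilon_j, 1]$ and the bound uses $\rho - \rho_j \le \rho$ on $[0, \epsilon_j]$. As $\epsilon_j \to 0$ and $s \rho (s)$ is integrable near $s = 0$, the right-hand side tends to zero, giving $\| H - H_j \|_\infty \to 0$.

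For compact support, choose $R < 1$ so that $\rho \equiv 0$ on $[R, 1]$; then $\int_r^1 s \rho (s) \, ds = 0$ for $r \ge R$, so $H$ vanishes outside the compact set $\{ r \le R \} \cap \{ z \in \mathrm{supp} \, \eta \}$. There is no real obstacle here: the integrability condition $a < 2$ is precisely what is needed for the improper integral to converge and for the tail estimate to close. The only mild subtlety is continuity at $r = 0$, which is handled by the monotonicity of $r \mapsto \int_r^1 s \rho (s) \, ds$ and the finiteness of the integral up to $0$.
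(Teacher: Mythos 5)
Your proposal is correct and takes essentially the same approach as the paper: both rely on the finiteness of $\int_{0^+}^1 s\rho(s)\,ds$ (which is where $a<2$ enters) and reduce the uniform estimate to the vanishing of the tail $\int_0^{\epsilon_j} s\rho(s)\,ds$. The paper packages this as a uniform Cauchy estimate $|H_j - H_k| \le |\eta|\int_0^{\epsilon_j}s\rho(s)\,ds$ and then names the limit, whereas you exhibit the limit $H$ first and bound $\|H-H_j\|_\infty$ directly, also spelling out continuity at $r=0$ and compact support; these are just two phrasings of the same argument.
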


\begin{proof}
By construction, the integral
\begin{align} \label{eqn:finite-int}
	\int_{0^+}^1 s \rho (s) ds = \lim_{r \to 0^+} \int_{r}^1 s \rho (s) ds
\end{align}
is finite, and therefore for all $k \ge j$,
	\[ | H_j - H_k | \le | \eta | \int_0^{\epsilon_j} s \left( \rho_k (s) - \rho_j (s) \right) ds \le | \eta | \int_{0^+}^{\epsilon_j} s \rho (s) ds \longrightarrow 0 \]
as $j \to \infty$.
The limit $H$ is defined by (\ref{eqn:ham-exa}) for $r > 0$, and by (\ref{eqn:finite-int}) if $r = 0$.
\end{proof}

Denote by $Y_j = Y_{H_j}$ and $Z_j = Z_{H_j}$ the vector fields defined in Example~\ref{exa:smooth-iso}, so that the smooth contact isotopy $\{ \phi_{H_j}^t \}$ of $\R^{2 n + 1}$ is again given by the composition $\{ \phi_{H_j}^t \} = \{ \phi_{Y_j}^t \circ \phi_{Y_j Z_j}^t \}$ corresponding to the decomposition
	\[ X_{H_j} = Y_j + \left( \phi_{Y_j}^t \right)_* \left( \left( \phi_{Y_j}^t \right)^{-1}_* (- Z_j) \right). \]

\begin{lem} \label{lem:y-conv}
The smooth vector fields $Y_j$ converge to a continuous vector field $Y$ uniformly on $\R^{2 n + 1}$.
In fact, $Y = Y_j = Y_k$ provided $r \ge \epsilon_j$ and $k \ge j$.
In particular, the isotopies $\{ \phi_{Y_j}^t \}$ converge uniformly to a compactly supported continuous isotopy, denoted by $\{ \phi_Y^t \}$, where $\phi_Y^t \colon \R^{2 n + 1} \to \R^{2 n + 1}$ is a continuous map for each time $t$.
\end{lem}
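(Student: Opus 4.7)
The plan is to concentrate the dependence of $Y_j$ on $\rho_j$ into the two scalar ingredients $I_j(r) := \int_r^1 s\rho_j(s)\,ds$ and $r^2\rho_j(r)$, and to control each separately. Writing the radial vector field in rectangular coordinates as $\sum_i r_i\partial_{r_i} = \sum_i(x_i\partial_{x_i}+y_i\partial_{y_i})$ recasts the formula for $Y_j$ as
\[
Y_j = \tfrac{1}{2}\eta'(z)\, I_j(r)\sum_i\bigl(x_i\partial_{x_i}+y_i\partial_{y_i}\bigr) + \eta(z)\bigl(\tfrac{1}{2} r^2\rho_j(r)+I_j(r)\bigr)\partial_z,
\]
showing that $Y_j$ extends continuously across $\{r=0\}$. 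I would define the candidate limit $Y$ by replacing $\rho_j$ with $\rho$ in this expression; continuity of $Y$ at $r=0$ then follows from the finiteness (\ref{eqn:finite-int}) together with the bound $r^2\rho(r) \le r^{2-a}\to 0$, valid since $0 < a < 2$.

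The pointwise identity $Y = Y_j = Y_k$ on $\{r \ge \epsilon_j\}$ for $k \ge j$ is immediate: there $\rho_j = \rho_k = \rho$ by construction, and the integrands defining $I_j$, $I_k$, $I$ agree on $[\epsilon_j, 1] \supset [r, 1]$. Uniform convergence then reduces to the complementary region $\{r < \epsilon_j\}$, where the elementary bounds
\[
|I_j(r)-I(r)|\le\int_0^{\epsilon_j}s\rho(s)\,ds,\qquad \bigl|r^2\rho_j(r)-r^2\rho(r)\bigr|\le r^{2-a}\le\epsilon_j^{2-a}
\]
combine with $|x_i|,|y_i|\le r\le\epsilon_j$ and the boundedness of $\eta,\eta'$ to control $\|Y_j-Y\|_\infty$; the first vanishes by the integrability (\ref{eqn:finite-int}) and the second because $2-a > 0$.

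For the flows, compact support of each $Y_j$ (inherited from the support of $\eta$ and the vanishing of $\rho$ near $r=1$) guarantees that $\phi_{Y_j}^t$ is defined on $[0,1]\times\R^{2n+1}$ and equals the identity outside a common compact set. The limit field $Y$ is smooth on $\{r > 0\}$ and reduces, on the $(r,z)$ slice, to $\dot r = A(r,z)r$, $\dot z = B(r,z)$, with the angular coordinates $\theta_i$ preserved. The multiplicative structure $\dot r = A(r,z)r$ forces $\{r=0\}$ to be invariant and trajectories with $r(0) > 0$ to remain in $\{r > 0\}$ for all time; together with local smoothness of $Y$ away from the axis and the smooth one-dimensional flow on the axis itself, this yields unique global integrability of $Y$. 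The classical continuous-dependence theorem for ODEs, applied to the uniformly convergent sequence $Y_j \to Y$ on the common compact support, then produces uniform convergence $\phi_{Y_j}^t \to \phi_Y^t$ on $[0,1]\times\R^{2n+1}$, with each $\phi_Y^t$ a compactly supported homeomorphism. The main obstacle is the non-Lipschitz behavior of $Y$ on the axis $\{r=0\}$; it is precisely the multiplicative form of the radial ODE that bypasses this and secures unique integrability.
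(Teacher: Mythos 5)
Your vector-field estimate is essentially the paper's, which proves the sequence is uniformly Cauchy via the bound (for $k \ge j$)
\[ |Y_j - Y_k| \le \frac{1}{2}\,|\eta'|\left(\int_{0^+}^{\epsilon_j}s\rho(s)\,ds\right)n\,\epsilon_j + |\eta|\left(\frac{1}{2}\sup_{0 < r \le \epsilon_j} r^2\rho(r) + \int_{0^+}^{\epsilon_j}s\rho(s)\,ds\right), \]
and notes that $Y_j = Y_k$ whenever $r \ge \epsilon_j$. Your version, phrased as a direct estimate of $Y_j - Y$ by isolating the two scalar ingredients $I_j(r)$ and $r^2\rho_j(r)$, is a cosmetic variant of the same bound; rewriting $\sum_i r_i\partial_{r_i} = \sum_i (x_i\partial_{x_i}+y_i\partial_{y_i})$ to make continuity across $\{r=0\}$ manifest is a nice touch.

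Where you genuinely go beyond the paper is the flow convergence. The paper's proof stops at the vector-field bound and treats the ``in particular'' about the isotopies as immediate, but that step is not purely formal: the limit field $Y$ is not Lipschitz at $\{r=0\}$, so the textbook continuous-dependence estimate does not apply directly. You correctly flag this and resolve it: the multiplicative form $\dot r = A(r,z)\,r$ of the radial equation makes $\{r=0\}$ invariant and keeps trajectories with $r(0)>0$ inside $\{r>0\}$, where $Y$ is smooth; together with the smooth one-dimensional dynamics on the axis this gives unique integrability of $Y$, and uniform convergence of the uniformly bounded, compactly supported $Y_j$ to a uniquely integrable limit yields uniform convergence of the flows by the Kamke-type compactness-plus-uniqueness argument. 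Calling this ``the classical continuous-dependence theorem'' is a slight misnomer (that result assumes a Lipschitz limit), but the substance is sound and fills a gap the paper elides.
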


\begin{proof}
By construction, $Y_j = Y_k$ provided $r \ge \epsilon_j$ and $k \ge j$.
If $k \ge j$, then
	\[ | Y_j - Y_k | \le \frac{1}{2} \, | \eta' | \left( \int_{0^+}^{\epsilon_j} s \rho (s) ds \right) n \, \epsilon_j + | \eta | \left( \frac{1}{2} \left. r^2 \rho (r) \right|_{0 < r \le \epsilon_j} + \int_{0^+}^{\epsilon_j} s \rho (s) ds \right) \longrightarrow 0 \]
as $j \to \infty$, uniformly on $\R^{2 n + 1}$.
\end{proof}

\begin{lem} \label{lem:r-bound}
Let
	\[ b = \frac{1}{2} \, | \eta' | \left( \int_{0^+}^1 s \rho (s) ds \right) > 0. \]
Then
	\[ e^{-b} \cdot r \le \left( \r \circ \phi_{Y_j}^t \right) (r_1, \ldots, r_n, \theta_1, \ldots, \theta_n, z) \le e^b \cdot r \]
for all $j$, all $0 \le t \le 1$, and all $(r_1, \ldots, r_n, \theta_1, \ldots, \theta_n, z)$ with $r = \sqrt{r_1^2 + \ldots + r_n^2}$.
\end{lem}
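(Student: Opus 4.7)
The plan is to derive and solve an ordinary differential equation for $\r \circ \phi_{Y_j}^t$ along the flow, using the explicit formula for the vector field $Y_j$ given in Example~\ref{exa:smooth-iso} (with $\rho$ replaced by $\rho_j$). Writing $r_i (t) = r_i \circ \phi_{Y_j}^t$ and $r(t) = \r \circ \phi_{Y_j}^t = \sqrt{r_1(t)^2 + \cdots + r_n(t)^2}$, the $\frac{\partial}{\partial r_i}$-components of $Y_j$ yield
\[ \frac{d r_i (t)}{d t} = \tfrac{1}{2} \, \eta'(z(t)) \left( \int_{r(t)}^1 s \, \rho_j (s) \, ds \right) r_i (t), \]
where $z(t) = \z \circ \phi_{Y_j}^t$. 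Multiplying by $r_i (t)$, summing over $i$, and using $\sum_i r_i(t)\, dr_i(t)/dt = r(t) \, dr(t)/dt$, this gives the scalar ODE
\[ \frac{d}{dt} \log r(t) = \tfrac{1}{2} \, \eta'(z(t)) \int_{r(t)}^1 s \, \rho_j (s) \, ds, \]
which is valid wherever $r(t) > 0$ (and at $r(t)=0$ the vector field $Y_j$ is tangent to the $z$-axis, so $r_i(t)$ remains zero and the bound is trivial).

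Next I would bound the right-hand side in absolute value. Since $0 \le \rho_j \le \rho$ on $[0,1]$ and $\rho_j$ vanishes outside, the integral satisfies
\[ \left| \int_{r(t)}^1 s \, \rho_j (s) \, ds \right| \le \int_{0^+}^1 s \, \rho (s) \, ds, \]
which is finite by the same argument used in the proof that $H_j \to H$ uniformly. Combined with the universal bound $|\eta'(z(t))| \le |\eta'|$, this yields $\bigl| \tfrac{d}{dt} \log r(t) \bigr| \le b$ uniformly in $j$ and in the initial point.

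Integrating this inequality from $0$ to $t \in [0,1]$ produces $|\log r(t) - \log r(0)| \le b$, which is exactly the desired two-sided estimate $e^{-b} r \le r(t) \le e^b r$. The case $r = 0$ is immediate since then the radial coordinates stay identically zero along the flow of $Y_j$. There is no real obstacle here; the only minor subtlety is justifying the differentiation of $\log r(t)$ at points where $r(t)$ might a priori approach zero, but this is handled by the observation that the $r$-axis is invariant under $\phi_{Y_j}^t$, so trajectories starting with $r > 0$ stay in the open set $\{ r > 0 \}$ throughout.
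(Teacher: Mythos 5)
Your proof is correct and follows essentially the same approach as the paper: both derive the radial ODE for the flow of $Y_j$, bound the coefficient uniformly by $b$, and integrate a Gronwall-type inequality. The only cosmetic difference is that the paper applies the two-sided exponential bound to each component $r_{j,i}(t)$ separately and then passes to $r_j(t)$, while you combine them immediately into the scalar equation $\frac{d}{dt}\log r(t) = \frac12\eta'(z(t))\int_{r(t)}^1 s\rho_j(s)\,ds$; this is equivalent, and your remark that $\{r>0\}$ is invariant handles the only technical point.
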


It is crucial to note that the constant $b$ is independent of $j$, $t \in [0,1]$, and $r \ge 0$.

\begin{proof}
The radial components $r_{j, i}$, $i = 1, \ldots, n$, of the flow of the vector field $Y_j$ are solutions to the ordinary differential equations
	\[ \dot r_{j, i} (t) = \frac{1}{2}  \eta' (z_j (t)) \left( \int_{r_j (t)}^1 s \rho_j (s) ds \right)  r_{j, i} (t), \]
where $r_j = \sqrt{r_{j, 1}^2 + \ldots + r_{j, n}^2}$, and $z_j (t)$ is the component of the flow of $Y_j$ in the $z$-direction, defined as the solution to the ordinary differential equation
	\[ \dot z_j (t) = \eta (z_j (t)) \left( \frac{1}{2} r_j^2 (t) \rho (r_j (t)) + \int_{r_j (t)}^1 s \rho (s) ds \right). \]
In particular, $r_{j, i} (t)$ is constant if the initial condition $z_j (0)$ is sufficiently close to zero.
Moreover,
	\[ \dot r_{j, i} (t) \le \frac{1}{2} \, | \eta' | \left( \int_{0^+}^1 s \rho (s) ds \right)  r_{j, i} (t) = b \cdot r_{j, i} (t), \]
and similarly $- b \cdot r_{j, i} (t) \le \dot r_{j, i} (t)$, and therefore $e^{- b t} r_{j, i} (0) \le r_{j, i} (t) \le e^{b t} r_{j, i} (0)$ for all $0 \le t \le 1$.
Hence
	\[ e^{-b t} \cdot r_j (0) \le r_j (t) \le e^{b t} \cdot r_j (0) \]
for all $j$ and all times $0 \le t \le 1$.
\end{proof}

\begin{lem} \label{lem:z-conv}
The smooth isotopies $\{ \phi_{Y_j Z_j}^t \}$ converge uniformly to a compactly supported continuous isotopy, denoted by $\{ \phi_{Y Z}^t \}$, where $\phi_{Y Z}^t \colon \R^{2 n + 1} \to \R^{2 n + 1}$ is a continuous map for each time $t$.
In fact,
	\[ (\phi_{Y_j}^t)^{-1}_* (- Z_j) = (\phi_{Y_k}^t)^{-1}_* (- Z_k), \ \ \mbox{and} \ \ \{ \phi_{Y Z}^t \} = \{ \phi_{Y_j Z_j}^t \} = \{ \phi_{Y_k Z_k}^t \}, \]
provided $r \ge e^b \epsilon_j$ and $k \ge j$, where $b > 0$ is the same constant as in Lemma~\ref{lem:r-bound}.
\end{lem}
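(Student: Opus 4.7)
Step~1 (vector-field equality). Fix $k \ge j$ and $p$ with $r(p) \ge e^b \epsilon_j$. By Lemma~\ref{lem:r-bound}, $r \circ \phi_{Y_j}^s(p) \ge e^{-b} r(p) \ge \epsilon_j$ for all $s \in [0, t]$, so the entire trajectory lies in $\{r \ge \epsilon_j\}$, where $\rho_j = \rho_k$ and hence $Y_j = Y_k$. Therefore $\phi_{Y_j}^s(p) = \phi_{Y_k}^s(p)$ throughout, and in particular the differentials $d(\phi_{Y_j}^t)^{-1}$ and $d(\phi_{Y_k}^t)^{-1}$ coincide at $q = \phi_{Y_j}^t(p)$; moreover $r(q) \ge \epsilon_j$ yields $Z_j(q) = Z_k(q)$. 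The first ``in fact'' equality then follows from the definition $(\phi_{Y_j}^t)^{-1}_*(-Z_j)(p) = d(\phi_{Y_j}^t)^{-1}|_q(-Z_j(q))$.

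Step~2 (structure of $\phi_{Y_j Z_j}^t$). Each $H_j$ is axially symmetric, so $\partial H_j/\partial \theta_i \equiv 0$ and $Z_j = \eta(z)\rho_j(r)\sum_i \partial/\partial \theta_i$ is tangent to the $\theta$-foliation. Consequently the vector fields $X_{H_j} = Y_j - Z_j$ and $Y_j$ share their $\partial/\partial r_i$ and $\partial/\partial z$ components, and the coupled ODE
\[ \dot r_i = \tfrac{1}{2}\eta'(z)\Bigl(\int_r^1 u\rho_j(u)\,du\Bigr) r_i, \qquad \dot z = \eta(z)\Bigl(\int_r^1 u\rho_j(u)\,du + \tfrac{1}{2}r^2 \rho_j(r)\Bigr) \]
governs the $(r_1, \ldots, r_n, z)$-components of both flows starting at $p$. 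Hence $\phi_{H_j}^s(p)$ and $\phi_{Y_j}^s(p)$ agree in these components and differ only in their $\theta_i$-coordinates, each shifted by the common amount $\Delta\theta_j(s, p) = -\int_0^s \eta(z(u))\rho_j(r(u))\,du$. It follows that $\phi_{Y_j Z_j}^t(p) = (\phi_{Y_j}^t)^{-1} \circ \phi_{H_j}^t(p)$ preserves every $r_i$ and $z$ and acts on each $(x_i, y_i)$-plane as a common rotation by angle $\Delta\theta_j(t, p)$; in Cartesian coordinates the displacement is bounded by $|\phi_{Y_j Z_j}^t(p) - p| \le 2 r(p)$.

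Step~3 (flow equality and uniform convergence). If $r(p) \ge e^b \epsilon_j$, then by Step~2 the trajectory $s \mapsto \phi_{Y_j Z_j}^s(p)$ stays on $\{r = r(p)\} \subset \{r \ge e^b \epsilon_j\}$, where Step~1 gives $(\phi_{Y_j}^s)^{-1}_*(-Z_j) = (\phi_{Y_k}^s)^{-1}_*(-Z_k)$ for all $k \ge j$; uniqueness of ODE solutions forces $\phi_{Y_j Z_j}^s(p) = \phi_{Y_k Z_k}^s(p)$ for all $s \in [0, t]$, giving the second ``in fact'' equality. For any $p$ with $r(p) > 0$ the sequence $\phi_{Y_j Z_j}^t(p)$ is therefore eventually constant in $j$; define $\phi_{YZ}^t(p)$ to be its eventual value, and set $\phi_{YZ}^t(p) = p$ when $r(p) = 0$ (consistent with $(\phi_{Y_j}^t)^{-1}_*(-Z_j)$ vanishing on the $z$-axis). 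On $\{r \ge e^b \epsilon_j\}$ we have $\phi_{Y_j Z_j}^t = \phi_{YZ}^t$ exactly, while on $\{r < e^b \epsilon_j\}$ both maps displace points by at most $2 r(p) < 2 e^b \epsilon_j$ by Step~2, so
\[ \bigl| \phi_{Y_j Z_j}^t(p) - \phi_{YZ}^t(p) \bigr| \le 4 e^b \epsilon_j \]
uniformly in $p \in \R^{2n+1}$ and $t \in [0, 1]$. Continuity and compact support of $\phi_{YZ}^t$ are immediate from uniform convergence of the continuous, compactly supported maps $\phi_{Y_j Z_j}^t$.

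The main obstacle is the structural observation in Step~2: once axial symmetry of $H_j$ is recognized as forcing $X_{H_j}$ and $Y_j$ to share their $(r, z)$-dynamics, $\phi_{Y_j Z_j}^t$ collapses into a uniform rotation in each $(x_i, y_i)$-plane with the $r_i$ and $z$ coordinates frozen, and the displacement estimate becomes routine. Without this observation, controlling the $z$-component near the singular locus $r = 0$, where the limiting vector field is generally not continuous (for $a \ge 1$), would be the essential technical difficulty.
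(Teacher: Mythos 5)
Your proof is correct and follows essentially the same route as the paper's. The key points — that (i) for $r(p) \ge e^b\epsilon_j$ the relevant trajectory under $\phi_{Y_j}^s$ stays in $\{r \ge \epsilon_j\}$ so the vector fields, differentials, and hence the pushforward vector fields $(\phi_{Y_j}^t)^{-1}_*(-Z_j)$ agree; (ii) the isotopy $\phi_{Y_j Z_j}^t$ preserves all $r_i$ and $z$; and (iii) the displacement estimate $O(r)$ then yields the uniform Cauchy estimate on $\{r < e^b\epsilon_j\}$ — are exactly the ingredients used in the paper. Your Step~2, working out explicitly that $\phi_{Y_j Z_j}^t$ acts as a rotation $R_{\Delta\theta_j(t,p)}$ in each $(x_i, y_i)$-plane because $\phi_{Y_j}^t$ commutes with rotations, makes explicit what the paper compresses into the one-line remark that $\r \circ \phi_{Y_j Z_j}^t$ is independent of $t$; this is a useful elaboration. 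Your final estimate $4e^b\epsilon_j$ is slightly looser than the paper's $2e^b\epsilon_j$ (which exploits that both $\phi_{Y_j Z_j}^t(p)$ and $\phi_{Y_k Z_k}^t(p)$ lie on the same circle of radius $r(p)$ so their distance is at most the diameter), but that is immaterial for convergence.
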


\begin{proof}
Lemma~\ref{lem:r-bound} implies
	\[ e^{-b} \cdot r \le \left( \r \circ (\phi_{Y_j}^t)^{-1} \right) (r_1, \ldots, r_n, \theta_1, \ldots, \theta_n, z) \le e^b \cdot r \]
for all $j$, all $0 \le t \le 1$, and all $(r_1, \ldots, r_n, \theta_1, \ldots, \theta_n, z)$ with $r = \sqrt{r_1^2 + \ldots + r_n^2}$.
Then by Lemma~\ref{lem:y-conv}, $(\phi_{Y_j}^t)^{-1}_* (- Z_j) = (\phi_{Y_k}^t)^{-1}_* (- Z_k)$ for $r \ge e^b \epsilon_j$ and all $k \ge j$.
Since $\r \circ \phi_{Y_j Z_j}^t$ is independent of $t$, it follows that $\phi_{Y_j Z_j}^t = \phi_{Y_k Z_k}^t$ for $r \ge e^b \epsilon_j$ and $k \ge j$, and thus $d (\phi_{Y_j Z_j}^t, \phi_{Y_k Z_k}^t) \le \max \{ 2 r \mid r \le e^b \epsilon_j \} = 2 e^b \epsilon_j \to 0$ as $j \to \infty$.
\end{proof}

\begin{cor} \label{cor:x-conv}
The smooth contact isotopies $\{ \phi_{H_j}^t \} = \{ \phi_{Y_j}^t \circ \phi_{Y_j Z_j}^t \}$ converge uniformly to a continuous isotopy $\{ \phi_t \} = \{ \phi_\rho^t \}$ of maps with compact support, and $\phi_t \colon \R^{2 n + 1} \to \R^{2 n + 1}$ is continuous for each time $t$.
In fact, $\phi_t = \phi_{H_j}^t$ for $r \ge e^b \epsilon_j$.
\end{cor}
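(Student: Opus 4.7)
The plan is to deduce the corollary directly from Lemmas~\ref{lem:y-conv} and \ref{lem:z-conv} by analyzing the composition $\phi_{H_j}^t = \phi_{Y_j}^t \circ \phi_{Y_j Z_j}^t$. First I would verify uniform convergence of the composition. Writing
\[ d(\phi_{H_j}^t, \phi_{H_k}^t) \le d(\phi_{Y_j}^t \circ \phi_{Y_j Z_j}^t, \phi_{Y_j}^t \circ \phi_{Y_k Z_k}^t) + d(\phi_{Y_j}^t \circ \phi_{Y_k Z_k}^t, \phi_{Y_k}^t \circ \phi_{Y_k Z_k}^t), \]
the second term tends to zero by Lemma~\ref{lem:y-conv}. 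For the first term, note that all maps $\phi_{Y_j}^t$ are supported in a common compact set (inherited from the support of the cut-off $\eta$) and converge uniformly, hence they are equicontinuous by the Arzelà--Ascoli theorem; applying this equicontinuity to the uniformly small displacements from Lemma~\ref{lem:z-conv} shows the first term also tends to zero. Thus $\{\phi_{H_j}^t\}$ is uniformly Cauchy, converging uniformly to a continuous isotopy $\{\phi_t\}$ supported in the same compact set.

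The main point is the ``in fact'' statement, which I expect to be the most delicate step. Fix a point $p = (r_1,\ldots,r_n,\theta_1,\ldots,\theta_n,z)$ with $r = \sqrt{r_1^2+\cdots+r_n^2} \ge e^b \epsilon_j$, and any $k \ge j$. From the proof of Lemma~\ref{lem:z-conv}, the map $\phi_{Y_j Z_j}^t$ preserves the radial coordinate $\r$, so $\r(\phi_{Y_j Z_j}^t(p)) = r \ge e^b \epsilon_j$, and moreover Lemma~\ref{lem:z-conv} already gives $\phi_{Y_j Z_j}^t(p) = \phi_{Y_k Z_k}^t(p)$. Setting $q = \phi_{Y_j Z_j}^t(p)$, I would then invoke Lemma~\ref{lem:r-bound} to control the trajectory of $q$ under $\phi_{Y_j}^s$ for $s \in [0,t]$: since
\[ \r(\phi_{Y_j}^s(q)) \ge e^{-b} \cdot \r(q) \ge e^{-b} \cdot e^b \epsilon_j = \epsilon_j, \]
the entire $Y_j$-trajectory issuing from $q$ lies in the region $\{\r \ge \epsilon_j\}$ on which $Y_j = Y_k$ by Lemma~\ref{lem:y-conv}. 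By uniqueness of integral curves, $\phi_{Y_j}^t(q) = \phi_{Y_k}^t(q)$, whence $\phi_{H_j}^t(p) = \phi_{H_k}^t(p)$. Since this holds for every $k \ge j$, the value $\phi_t(p) = \lim_k \phi_{H_k}^t(p)$ coincides with $\phi_{H_j}^t(p)$.

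Finally, continuity of each $\phi_t$ is automatic as the uniform limit of continuous maps, and compact support follows because $H_j$ is compactly supported in a common set (independent of $j$), hence $\phi_{H_j}^t = \id$ outside that set and the same holds in the limit. The subtle point throughout, and the reason the constant $e^b$ appears rather than $1$, is that while $\phi_{Y_j Z_j}^t$ is $\r$-preserving, the subsequent application of $\phi_{Y_j}^t$ can shrink $\r$ by a factor as bad as $e^{-b}$, so one must start with enough ``radial room'' to guarantee the entire trajectory remains in the region where the approximating vector fields agree.
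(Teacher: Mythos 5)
Your proof is correct and follows exactly the route that Lemmas~\ref{lem:y-conv}, \ref{lem:r-bound}, and \ref{lem:z-conv} are designed to support (the paper leaves this corollary without an explicit proof). You correctly identify the key point: $\phi_{Y_j Z_j}^t$ preserves $\r$, so starting with $r(p) \ge e^b\epsilon_j$ leaves exactly enough radial margin that, after the possible shrinkage by a factor $e^{-b}$ under $\phi_{Y_j}^s$ permitted by Lemma~\ref{lem:r-bound}, the $Y_j$-trajectory never leaves $\{\r \ge \epsilon_j\}$, where $Y_j = Y_k$; uniqueness of integral curves then gives $\phi_{H_j}^t = \phi_{H_k}^t$ there, and the uniform Cauchy estimate for the composition follows from uniform convergence of the two factors together with equicontinuity of the convergent family $\{\phi_{Y_j}^t\}$.
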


\begin{lem}
For each time $t$, the map $\phi_t \colon \R^{2 n + 1} \to \R^{2 n + 1}$ is a homeomorphism.
\end{lem}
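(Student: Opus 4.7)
The plan is to exhibit a continuous inverse $\psi_t$ as a uniform limit of the inverse isotopies $(\phi_{H_j}^t)^{-1}$, then verify $\phi_t \circ \psi_t = \psi_t \circ \phi_t = \id$ via equicontinuity of the approximating sequence.

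First, because each $H_j = H_{\rho_j}$ is autonomous, the smooth isotopy $\{\phi_{H_j}^t\}$ is a one-parameter subgroup, so $(\phi_{H_j}^t)^{-1} = \phi_{H_j}^{-t} = \phi_{-H_j}^t$; the linear dependence of $X_H$ on $H$ in (\ref{eqn:contact-ham}) then identifies $-H_{\rho_j}$ with $H_{-\rho_j}$. The constants controlling the proofs of Lemmas~\ref{lem:y-conv}, \ref{lem:r-bound}, and \ref{lem:z-conv} involve only $|\eta|$, $|\eta'|$, and integrals of the form $\int_{0^+}^1 s|\rho(s)|\,ds$, so the entire construction and Corollary~\ref{cor:x-conv} apply verbatim to the sequence $-\rho_j$ and produce a compactly supported continuous isotopy $\{\psi_t\}$ with $(\phi_{H_j}^t)^{-1} \to \psi_t$ uniformly on $\R^{2n+1}$.

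Second, observe that the family $\{\phi_{H_j}^t\}_j$ is equicontinuous. All supports lie in a common compact set $K$, outside of which every $\phi_{H_j}^t$ equals the identity; on $K$ the sequence converges uniformly to the continuous map $\phi_t$. A standard three-epsilon argument---uniform continuity of $\phi_t$ handles the uniformly close tail, and individual uniform continuity takes care of the finite initial segment---then yields equicontinuity of the full family on $\R^{2n+1}$.

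Finally, for any $x$, equicontinuity together with $(\phi_{H_j}^t)^{-1}(x) \to \psi_t(x)$ implies that $\phi_{H_j}^t(\psi_t(x)) - \phi_{H_j}^t((\phi_{H_j}^t)^{-1}(x)) \to 0$. The second term equals $x$ identically and the first converges to $\phi_t(\psi_t(x))$ by uniform convergence $\phi_{H_j}^t \to \phi_t$, so $\phi_t \circ \psi_t = \id$; the symmetric argument with $\rho$ and $-\rho$ interchanged gives $\psi_t \circ \phi_t = \id$. The main obstacle will be executing this limit swap cleanly: identifying the inverse isotopies algebraically is immediate, but concluding $\phi_t \circ \psi_t = \id$ requires comparing $\phi_{H_j}^t$ evaluated at two different sequences of points, which is precisely where equicontinuity is essential.
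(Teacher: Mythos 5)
Your proof is correct, but it takes a genuinely different route from the paper's. The paper observes that since $\phi_t$ is compactly supported, a standard point-set argument reduces the claim to verifying that $\phi_t$ is a continuous bijection; it then checks bijectivity on each of the two $\phi_t$-invariant pieces $A = \{ r > 0 \}$ and $B = \{ r = 0 \}$ separately: on $A$, bijectivity follows from the stabilization $\phi_t = \phi_{H_j}^t$ for $r \geq e^b \epsilon_j$ (Corollary~\ref{cor:x-conv}) combined with the uniform radial control of Lemma~\ref{lem:r-bound}, while on $B$ the restricted isotopy is the flow of the smooth vector field $\eta(z) \bigl( \int_{0^+}^1 s\rho(s)\,ds \bigr) \frac{\partial}{\partial z}$ and hence is a diffeomorphism of the $z$-axis. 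You instead produce the inverse directly as the uniform limit $\psi_t$ of $(\phi_{H_j}^t)^{-1} = \phi_{H_{-\rho_j}}^t$ (using that the $H_j$ are autonomous) and close the loop with an equicontinuity argument to get $\phi_t \circ \psi_t = \psi_t \circ \phi_t = \id$; this is sound. One small caveat on your claim that the construction applies verbatim to $-\rho_j$: the paper assumes $\rho \geq 0$ without loss of generality, so for $-\rho_j$ the monotonicity $0 \leq \rho_j \leq \rho_k \leq \rho$ reverses sign, and the estimates in Lemmas~\ref{lem:y-conv}, \ref{lem:r-bound}, and \ref{lem:z-conv} should then be stated with $|\rho|$; the conclusions are unaffected, but the repetition is not literally verbatim. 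Your approach has the advantage of exhibiting the global inverse $\psi_t = \phi_{-\rho}^t$ explicitly (the paper records $(\phi_\rho^t)^{-1} = \phi_{-\rho}^t$ only on the neighborhood $U$, in Lemma~\ref{lem:cont-iso-exa}); the paper's is leaner in that it needs neither a second limit isotopy nor an equicontinuity argument, because bijectivity plus compact support already settles everything.
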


\begin{proof}
Since $\phi_t$ is compactly supported, it suffices to prove it is injective as well as surjective.
Continuity of the inverse then follows from a standard argument in point set topology.
By Lemma~\ref{lem:r-bound}, the subset
	\[ A = \left\{ (r_1, \ldots, r_n, \theta_1, \ldots, \theta_n, z) \in \R^{2 n + 1} \mid r > 0 \right\} \subset \R^{2 n + 1} \]
and its complement $B$ in $\R^{2 n + 1}$ are invariant under the isotopy $\{ \phi_t \}$.
It follows again from Lemma~\ref{lem:r-bound} and from the last part of Corollary~\ref{cor:x-conv} that $\phi_t$ is injective and surjective on $A$.
On the other hand, the smooth vector fields
	\[ \eta (z) \left( \int_0^1 s \rho_j (s) ds \right) \frac{\partial}{\partial z} \longrightarrow \eta (z) \left( \int_{0^+}^1 s \rho (s) ds \right) \frac{\partial}{\partial z} \]
uniformly as $j \to \infty$.
The limit is a smooth vector field, and the restriction of the isotopy $\{ \phi_t \}$ to the subspace $B \subset \R^{2 n + 1}$ is determined by the smooth map $\R \to \R$ it generates.
This map is injective and surjective.
\end{proof}

Abbreviate the constants $u (\rho_j, \eta) > 0$ by $u_j$, and again write $u = u (\rho, \eta)$ for the positive number defined by (\ref{eqn:rho-eta}).
Let $U_j = \R^{2 n} \times [- u_j, u_j]$ and $U = \R^{2 n} \times [- u, u]$ denote the corresponding neighborhoods of the origin in $\R^{2 n + 1}$.

\begin{lem} \label{lem:cont-iso-exa}
If $k \ge j$, then $0 < u \le u_j \le u_k$, and therefore $U_j \supset U_k \supset U$ defines a nested sequence of neighborhoods of the origin.
In particular,
\begin{align}
	& \phi_t (r_1, \ldots, r_n, \theta_1, \ldots, \theta_n, z) \label{eqn:iso-near-origin} \\
	& \ = \left (r_1, \ldots, r_n, \theta_1 - t \rho (r), \ldots, \theta_n - t \rho (r), z + t \left( \frac{1}{2} r^2 \rho (r) + \int_r^1 s \rho (s) ds \right) \right) \nonumber
\end{align}
on $U \subset \R^{2 n + 1}$.
When restricted to $U$, the inverse is $(\phi_\rho^t)^{-1} = \phi_{- \rho}^t$.
\end{lem}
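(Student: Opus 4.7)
The plan is to reduce the nested inclusions to monotonicity of the auxiliary quantity
\[ E(\sigma,r) = \tfrac{1}{2} r^2 \sigma(r) + \int_r^1 s\,\sigma(s)\,ds \]
in its first argument, and then to deduce formula~(\ref{eqn:iso-near-origin}) by taking limits in the smooth formula~(\ref{eqn:iso-exa}) established in Example~\ref{exa:smooth-iso}.

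First I would identify the largest constant $c > 0$ such that $\eta \equiv 1$ on $[-c,c]$. Inspecting the defining condition~(\ref{eqn:rho-eta}) then yields the clean formula
\[ u(\sigma,\eta) = c - \sup_{r \ge 0} E(\sigma, r), \]
and similarly for each $u_j$; the suprema are finite because $r^2 \rho(r) = r^{2-a} \to 0$ as $r \to 0^+$ and $\int_{0^+}^1 s \rho(s)\,ds$ converges, both being consequences of $0 < a < 2$. From the pointwise bounds $0 \le \rho_j \le \rho_k \le \rho$ (valid for $k \ge j$) together with non-negativity of the integrand, one obtains $E(\rho_j, r) \le E(\rho_k, r) \le E(\rho, r)$ for every $r > 0$, hence $\sup_r E(\rho_j, r) \le \sup_r E(\rho_k, r) \le \sup_r E(\rho, r)$. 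Subtracting from $c$ produces the nested chain of neighborhoods $U_j \supset U_k \supset U$, with $u > 0$ by the choice of $\rho$ and $\eta$.

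For the explicit formula on $U$, the key point is that by Example~\ref{exa:smooth-iso}, formula~(\ref{eqn:iso-exa}) with $\rho_j$ in place of $\rho$ is valid on the set $U_j$, hence in particular on the smaller set $U$, for every $j$. Fix a point $(r_1, \ldots, \theta_n, z) \in U$ with $r > 0$. As soon as $\epsilon_j \le r$ we have $\rho_j(r) = \rho(r)$, and $\int_r^1 s \rho_j(s)\,ds \to \int_r^1 s \rho(s)\,ds$ by monotone convergence. Combined with the uniform convergence $\phi_{H_j}^t \to \phi_t$ from Corollary~\ref{cor:x-conv}, this passes the identity to the limit on $\{r > 0\} \cap U$. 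Both sides extend continuously in the Cartesian coordinates $(x_1, y_1, \ldots, x_n, y_n, z)$ across the $z$-axis---the right-hand side via $E(\rho, 0) = \int_{0^+}^1 s \rho(s)\,ds$---so (\ref{eqn:iso-near-origin}) holds throughout $U$.

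The inverse identity $(\phi_\rho^t)^{-1} = \phi_{-\rho}^t$ on $U$ follows by running the same limiting argument on the smooth identities $(\phi_{H_{\rho_j}}^t)^{-1} = \phi_{H_{-\rho_j}}^t$ noted in Example~\ref{exa:smooth-iso}, together with uniqueness of uniform limits. The one real obstacle throughout is the singularity of $\rho$ at $r = 0$, controlled uniformly in $j$ by the exponent condition $0 < a < 2$, which is precisely what makes $E(\rho,\cdot)$ bounded and continuous on $[0,1]$.
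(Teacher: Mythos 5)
Your proposal is correct and expands on the paper's one-line proof (which simply cites the hypothesis $0 \le \rho_j \le \rho_k \le \rho$), making explicit the mechanism via the formula $u(\sigma,\eta) = c - \sup_{r\ge 0} E(\sigma,r)$ and then taking uniform limits for the explicit expression and the inverse. One useful byproduct of your derivation: the monotonicity $E(\rho_j,\cdot) \le E(\rho_k,\cdot) \le E(\rho,\cdot)$ yields $u_j \ge u_k \ge u > 0$, which is consistent with the stated chain of neighborhoods $U_j \supset U_k \supset U$ but shows that the inequality $0 < u \le u_j \le u_k$ as printed in the lemma statement has the middle relation reversed and should read $0 < u \le u_k \le u_j$.
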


\begin{proof}
The lemma follows at once from the hypothesis $0 \le \rho_j \le \rho_k \le \rho$.
\end{proof}

\begin{lem} \label{lem:conformal-factor-exa}
The conformal factors $h_j$ on $[0,1] \times \R^{2 n + 1}$ given by ${(\phi_{H_j}^t)^* \alpha = e^{h_j^t} \alpha}$ converge uniformly to the continuous function $h \colon [0,1] \times \R^{2 n + 1} \to \R$ defined by
	\[ h_t = \int_0^t \left( \eta' (\z) \int_\r^1 v \rho (v) dv \right) \circ \phi_s \, ds, \]
where the second integral is to be interpreted as in equation~(\ref{eqn:finite-int}) at $r = 0$.
The restriction of $h_t$ to $\phi_s^{-1} (U)$ is zero for all $0 \le s \le 1$.
\end{lem}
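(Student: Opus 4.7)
The plan is to establish three things in turn: uniform convergence $h_j^t \to h_t$ on $[0,1] \times \R^{2n+1}$, continuity of the limit $h$, and the vanishing of $h_t$ on $\phi_s^{-1}(U)$. For the first two, I would set $F(r) := \int_r^1 v\rho(v)\,dv$ and $F_j(r) := \int_r^1 v\rho_j(v)\,dv$. By (\ref{eqn:finite-int}), $F$ extends continuously to $[0, 1]$, and since $\rho_j = \rho$ on $[\epsilon_j, 1]$ with $0 \le \rho_j \le \rho$,
\[ |F_j(r) - F(r)| \le \int_{0^+}^{\epsilon_j} v\rho(v)\,dv \longrightarrow 0 \]
uniformly in $r \in [0, 1]$. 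Combined with the uniform convergence $\phi_{H_j}^s \to \phi_s$ from Corollary~\ref{cor:x-conv}, the compact support and uniform continuity of $\eta'$, and the continuity of $\r$, $\z$, and $F$, the integrands
\[ g_j(s, x) := \eta'(\z \circ \phi_{H_j}^s(x)) \cdot F_j(\r \circ \phi_{H_j}^s(x)) \]
will converge uniformly on $[0, 1] \times \R^{2n+1}$ to $g(s, x) := \eta'(\z \circ \phi_s(x)) \cdot F(\r \circ \phi_s(x))$. Integrating over $s$ preserves uniform convergence, so $h_j^t \to h_t$ uniformly; since $g$ is jointly continuous (as a composition of continuous maps), $h_t(x) = \int_0^t g(s, x)\,ds$ will then be jointly continuous in $(t, x)$.

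For the vanishing, I would fix $s \in [0, 1]$ and $x \in \phi_s^{-1}(U)$, and set $y := \phi_s(x) \in U$. Because each $H_j$ is time-independent, the smooth isotopies $\{\phi_{H_j}^t\}$ form one-parameter groups; passing to the uniform limit, the same property is inherited by $\{\phi_t\}$, so $\phi_{s'}(x) = \phi_{s' - s}(y)$ for every $s' \in [0, 1]$. Applying Lemma~\ref{lem:cont-iso-exa} together with the inverse formula $(\phi_\rho^t)^{-1} = \phi_{-\rho}^t$ on $U$, the $z$-coordinate of $\phi_\tau(y)$ for $\tau = s' - s \in [-1, 1]$ equals $z(y) + \tau A(r(y))$ with $A(r) := \tfrac{1}{2} r^2 \rho(r) + F(r)$. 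Choosing $\eta$ to be a standard bump with plateau $\{\eta = 1\} = [-C, C]$, relation (\ref{eqn:rho-eta}) combined with $|z(y)| \le u$ places both $z(y) \pm A(r(y))$ in $[-C, C]$, so the whole segment $\{ z(y) + \tau A(r(y)) : \tau \in [-1, 1] \}$ lies in $[-C, C]$, where $\eta'$ vanishes. Consequently $g(s', x) = 0$ for all $s' \in [0, 1]$, and $h_t(x) = 0$.

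The main obstacle will be controlling the integrands uniformly near the singularity of $\rho$ at $r = 0$; this is handled precisely by the integrability $\int_{0^+}^1 v\rho(v)\,dv < \infty$ (ensured by $a < 2$), which both extends $F$ continuously to $r = 0$ and furnishes the uniform smallness of $F_j - F$.
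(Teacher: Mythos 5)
Your proposal is correct and follows essentially the same route the paper intends: uniform convergence of the integrands $\eta'(\z\circ\phi_{H_j}^s)\cdot F_j(\r\circ\phi_{H_j}^s)$ via Corollary~\ref{cor:x-conv} and the monotone construction of $\rho_j$, and vanishing on $\phi_s^{-1}(U)$ via the explicit flow on $U$ from Lemma~\ref{lem:cont-iso-exa} together with the defining property~(\ref{eqn:rho-eta}) of $u$. The only cosmetic point is that you do not get to \emph{choose} $\eta$ at this stage — it is fixed in the construction — but taking $\{\eta=1\}$ to be an interval is implicit in how~(\ref{eqn:rho-eta}) and $u=u(\rho,\eta)$ are used throughout the section, so your plateau argument is the intended one.
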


\begin{proof}
This follows from the identity~(\ref{eqn:conformal-factor-exa}), Corollary~\ref{cor:x-conv}, the construction of the sequence $\rho_j$, and the previous lemma.
\end{proof}

Write $\Phi_H = \{ \phi_t \}$.
By the following corollary, this notation is unambiguous.

\begin{cor}
The triple $(\Phi_H, H, h)$ is a continuous contact dynamical system, and therefore also a topological contact dynamical system.
In particular, its time-one map $\phi$ is a contact homeomorphism.
\end{cor}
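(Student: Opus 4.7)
The plan is to verify directly that $(\Phi_H, H, h)$ meets the definition of a continuous contact dynamical system by assembling the convergence statements already obtained. First, using Darboux's theorem as noted in the opening paragraph of the section, the compactly supported smooth contact dynamical systems $(\Phi_{H_j}, H_j, h_j)$ on $\R^{2n+1}$ transfer to smooth contact dynamical systems on any closed contact manifold $(M, \alpha)$, so we may work in the setting where the norms $\|\cdot\|$ and $|\cdot|$ are defined. It then suffices to exhibit the sequence $(\Phi_{H_j}, H_j, h_j)$ as the approximating sequence required in the definition of a continuous contact dynamical system.

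The three required convergences are already in hand. Uniform convergence $\Phi_{H_j} \to \Phi_H$ of isotopies of homeomorphisms is Corollary~\ref{cor:x-conv} together with the subsequent lemma that each $\phi_t$ is a homeomorphism; because the isotopies are compactly supported in a Darboux chart and $\phi_t$ is a homeomorphism, the uniform convergence $d(\Phi_{H_j}, \Phi_H) \to 0$ promotes automatically to $\dbar(\Phi_{H_j}, \Phi_H) \to 0$ by the remarks made after the definition of $\dbar$ in Section~\ref{sec:topo-contact-dyn}. Uniform convergence of conformal factors $|h_j - h| \to 0$ is Lemma~\ref{lem:conformal-factor-exa}. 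The Hamiltonians $H_j$ and their limit $H$ are time-independent and compactly supported in $U \subset \R^{2n+1}$, and the proof of the lemma showing $H_j \to H$ uniformly gives the explicit estimate
\[
	\max_{M} |H - H_j| \le |\eta| \int_{0^+}^{\epsilon_j} s \rho(s) \, ds \longrightarrow 0.
\]

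From this uniform control I would extract convergence in the $L^\infty$-norm of~(\ref{eqn:infty-norm}). Since $H - H_j$ is time-independent, $\|H - H_j\|_\infty = \|H - H_j\| = \osc(H - H_j) + |c(H - H_j)|$, and both summands are bounded by a constant multiple of $\max_M |H - H_j|$, with the constant for $|c|$ depending only on the finite total volume $\int_M \nu_\alpha$. Consequently $\|H - H_j\|_\infty \to 0$, which is precisely the final clause in the definition of a continuous contact dynamical system. Hence $(\Phi_H, H, h) \in \CCDS(M,\alpha)$, and since $L^\infty$-convergence implies $L^\oneinfty$-convergence, it also lies in $\TCDS(M,\alpha)$. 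The time-one map $\phi = \phi_H^1$ is then a contact homeomorphism by the definition of $\Homeo(M,\xi)$ via the evaluation map~(\ref{eqn:ev-homeo}).

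There is no serious obstacle here; the corollary is essentially a bookkeeping step that packages the uniform limits established in the preceding sequence of lemmas into the three convergence conditions of the definition. The only point requiring a moment of care is the passage from pointwise uniform convergence of the Hamiltonians to convergence in the norm $\osc + |c|$, which is immediate from compactness of $M$ and the compact support of the approximating functions, and the upgrade from $d$-convergence to $\dbar$-convergence of the isotopies, which follows from the general principle that a $d$-Cauchy sequence with a homeomorphism limit is automatically $\dbar$-Cauchy.
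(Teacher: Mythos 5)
Your proposal is correct and takes the same route the paper intends: the corollary has no separate proof in the paper precisely because it is the assembly step you describe, collecting Corollary~\ref{cor:x-conv}, the homeomorphism lemma, Lemma~\ref{lem:conformal-factor-exa}, and the uniform convergence $H_j \to H$ into the three clauses of the definition of a continuous contact dynamical system, with the promotion from $d$- to $\dbar$-convergence and the elementary bound $\|H - H_j\| \le 3\max|H - H_j|$ (the inequality $\|\cdot\| < 3|\cdot|$ for time-independent functions, which the paper itself cites in Step~1 of the Main Lemma's proof) handling the two small technical points. Nothing is missing.
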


\begin{lem} \label{lem:time-one-map-exa}
The time-one map $\phi$ of the continuous isotopy $\{ \phi_t \}$ is not Lipschitz continuous, and in particular, not $C^1$-smooth.
\end{lem}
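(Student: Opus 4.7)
The plan is to exploit the explicit formula~(\ref{eqn:iso-near-origin}) for $\phi = \phi_1$ on the neighborhood $U$ of the origin, and to exhibit a sequence of pairs of points $(p_k, q_k)$ in $U$ with $d(p_k, q_k) \to 0$ whose images satisfy $d(\phi(p_k), \phi(q_k))/d(p_k, q_k) \to \infty$. The underlying mechanism is direct: on the slice $\{z = 0\} \subset U$, formula~(\ref{eqn:iso-near-origin}) shows that $\phi$ acts as a rigid rotation through angle $-\rho(r)$ on each sphere of radius $r$, and since $\rho(r) = r^{-a}$ with $0 < a < 2$ diverges as $r \to 0^+$, two concentric spheres of nearby small radii are rotated by vastly different angles, tearing apart any sufficiently short radial segment.

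Concretely, I would choose the radii $r_k = (\pi/2 + 2\pi k)^{-1/a}$ and $r'_k = (3\pi/2 + 2\pi k)^{-1/a}$, both tending to $0$, so that $\sin\rho(r_k) = 1$ and $\sin\rho(r'_k) = -1$, and then take $p_k = (r_k, 0, \ldots, 0)$ and $q_k = (r'_k, 0, \ldots, 0)$ in rectangular coordinates $(x_1, y_1, \ldots, x_n, y_n, z)$. Both points have $z = 0$ and thus lie in $U$, so the formula~(\ref{eqn:iso-near-origin}) applies (by Lemma~\ref{lem:cont-iso-exa}). A first-order expansion using $\rho'(r) = -a r^{-a-1}$ near $r = 0$ together with the mean value theorem gives $d(p_k, q_k) = |r_k - r'_k| \sim (\pi/a)\, r_k^{a+1}$. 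On the other hand, the $y_1$-coordinates of the images are $-r_k \sin\rho(r_k) = -r_k$ and $-r'_k \sin\rho(r'_k) = r'_k$, so $d(\phi(p_k), \phi(q_k)) \geq r_k + r'_k \geq r_k$.

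Dividing, $d(\phi(p_k), \phi(q_k))/d(p_k, q_k) \gtrsim r_k^{-a} \to \infty$, so no Lipschitz constant controls $\phi$ in any neighborhood of the origin. Since a $C^1$-diffeomorphism of $\R^{2n+1}$ is automatically locally Lipschitz, $\phi$ is also not of class $C^1$, completing the proof. There is essentially no technical obstacle at this stage, because the substantive work has already been carried out in establishing the closed-form expression~(\ref{eqn:iso-near-origin}) for the time-one map on $U$; the only point requiring care is to choose the radii $r_k, r'_k$ so that the angular mismatch $\rho(r_k) - \rho(r'_k)$ stays of unit order (here equal to $\pi$) while the radial separation $|r_k - r'_k|$ still tends to $0$, at the rate $r_k^{a+1}$. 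This delicate balance is exactly what the rate of divergence of $\rho'$ at $r = 0$ permits.
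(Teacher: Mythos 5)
Your proof is correct and takes essentially the same route as the paper's: both pick two sequences of radii tending to zero whose rotation angles $\rho(r_k)$ and $\rho(r'_k)$ differ by $\pi$, observe that the radial gap is of order $r_k^{1+a}$ while the images end up on opposite sides of the axis at distance $\geq r_k + r'_k$, and conclude the difference quotient blows up like $r_k^{-a}$. The paper phases the angles at $0$ and $\pi$ (rather than $\pi/2$ and $3\pi/2$) and packages the mean-value estimate as a crude inequality $s_k - s_k' < s_k^{1+\delta}$ for a fixed $\delta < a$, but these are cosmetic variations of the same argument.
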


\begin{proof}
Since $\rho (r) = r^{-a}$ near $r = 0$, there exists a constant $0 < \delta < a$, and two sequences $s_k > s_k' > 0$ that necessarily converge to zero, such that $\rho (s_k) = 0$ and $\rho (s_k') = \pi$ modulo $2 \pi$, and $s_k - s_k' < s_k^{1 + \delta}$.
By identity~(\ref{eqn:iso-exa}) and by Corollary~\ref{cor:x-conv}, near the origin
\begin{align}
	& \phi (r_1, \ldots, r_n, \theta_1, \ldots, \theta_n, z) \label{eqn:time-one-map-exa} \\
	& \ \ \ = \left (r_1, \ldots, r_n, \theta_1 - \rho (r), \ldots, \theta_n - \rho (r), z + \frac{1}{2} r^2 \rho (r) + \int_r^1 s \rho (s) ds \right). \nonumber
\end{align}
Then
	\[ \frac{\left| \phi (s_k, 0, \ldots, 0) - \phi (s_k', 0, \ldots, 0) \right|}{\left| (s_k, 0, \ldots, 0) - (s_k', 0, \ldots, 0) \right|} > \frac{s_k + s_k'}{s_k - s_k'} > \frac{1}{s_k^\delta} \longrightarrow + \infty \]
as $k \to \infty$.
This shows that $\phi$ cannot be Lipschitz continuous.
\end{proof}

Using the above contact homeomorphism $\phi$, one can construct smooth contact vector fields $X_H$ and $X_F$ that are topologically conjugate, but not conjugate by a $C^1$-diffeomorphism that preserves the contact structure.
Cf.\ section 11 in \cite{ms:hvf11}.
Again both $H$ and $F$ are compactly supported inside a Darboux chart, so it suffices to present such examples on standard $\R^{2 n + 1}$.

\begin{exa} \label{exa:conj-vfs}
Let $\phi = \phi_\rho$ be a contact homeomorphism as in Lemma~\ref{lem:time-one-map-exa}, and let $U \subset \R^{2 n + 1}$ denote the neighborhood of the origin defined by (\ref{eqn:rho-eta}).
Let $F \colon \R^{2 n + 1} \to \R$ be an autonomous smooth Hamiltonian that vanishes outside the set $U$, and in a neighborhood of the origin is given by the map
	\[ F (r_1, \ldots, r_n, \theta_1, \ldots, \theta_n, z) = e^{- f (r_1, \theta_1)} \]
if $r_1 > 0$, and zero otherwise, where the smooth map
	\[ f (r, \theta) = \frac{4}{r^2 (1 + 15 \cos^2 \theta)} \]
is the composition of the map $r \mapsto \frac{1}{r^2}$ with the area-preserving change of coordinates $(x, y) \mapsto (2 x, \frac{y}{2})$.
Define $H \colon \R^{2 n + 1} \to \R$ by $H = F \circ \phi = e^{- h} (F \circ \phi)$, since the topological conformal factor $h$ of $\phi$ vanishes on $\phi^{-1} (U)$ by Lemma~\ref{lem:conformal-factor-exa}.
Then by the exponential decay as $r \to 0^+$,
	\[ H (r_1, \ldots, r_n, \theta_1, \ldots, \theta_n, z) = e^{- f (r_1, \theta_1 - \rho (r))} \]
is a smooth map on $\R^{2 n + 1}$, where again $r = \sqrt{r_1^2 + \ldots + r_n^2}$.
Therefore $\Phi_H$ and $\Phi_F$ are smooth contact isotopies, and by Theorem~\ref{thm:trafo-law}, we have $\{ \phi_H^t \} = \{ \phi^{-1} \circ \phi_F^t \circ \phi \}$.
\end{exa}

\begin{lem}
There is no contact $C^1$-diffeomorphism $\psi$ such that the identity $\{ \phi_H^t \} = \{ \psi^{-1} \circ \phi_F^t \circ \psi \}$ holds.
\end{lem}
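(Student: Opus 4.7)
The plan is to proceed by contradiction. Suppose such a contact $C^1$-diffeomorphism $\psi$ exists, and write $\psi^*\alpha = e^g\alpha$ for the conformal factor $g$, which is continuous by $C^1$-smoothness of $\psi$ and the identity $e^g = \alpha(d\psi(R_\alpha))$. Differentiating the conjugation identity $\psi \circ \phi_H^t = \phi_F^t \circ \psi$ at $t = 0$ gives $d\psi_y X_H(y) = X_F(\psi(y))$, and contracting with $\alpha$ yields $H(y) = e^{-g(y)} F(\psi(y))$. By construction---alternatively by Theorem~\ref{thm:trafo-law} applied to $\phi$, together with Lemma~\ref{lem:conformal-factor-exa} (which gives that the topological conformal factor of $\phi$ vanishes on $\phi^{-1}(U)$)---one also has $H(y) = F(\phi(y))$ in a neighborhood of the origin, so
\begin{equation*}
F(\psi(y)) = e^{g(y)} F(\phi(y))
\end{equation*}
near the origin.

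Next I would pass to the Cartesian coordinates $(x_1, y_1, \ldots, x_n, y_n, z)$. The polar-Cartesian identity $r_1^2(1 + 15 \cos^2 \theta_1) = 16 x_1^2 + y_1^2$ turns $F$ into $F(w) = e^{-4/G(w)}$ near the origin, where $G(w) := 16 x_1(w)^2 + y_1(w)^2$. Taking logarithms in the displayed equation yields
\begin{equation*}
g(y) = \frac{4}{G(\phi(y))} - \frac{4}{G(\psi(y))}.
\end{equation*}
Since $g$ is bounded near $0$ while $G(\phi(y)) \to 0$ (because $\phi(0)$ lies on the $z$-axis $\{x_1 = y_1 = 0\}$), it follows that $G(\psi(y)) \to 0$ as well---so $\psi(0)$ also lies in $\{x_1 = y_1 = 0\}$---and a trivial rearrangement promotes this to the $C^0$-estimate
\begin{equation*}
\frac{G(\psi(y))}{G(\phi(y))} \longrightarrow 1 \quad \text{as } y \to 0.
\end{equation*}

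To derive the contradiction I would test this estimate at the points $y_s = (s, 0, \ldots, 0)$ for small $s > 0$ (with $x_1(y_s) = s$ and all other Cartesian coordinates zero). From the explicit formula~(\ref{eqn:time-one-map-exa}) for $\phi$, a direct computation gives $G(\phi(y_s)) = s^2(1 + 15 \cos^2 \rho(s))$, which oscillates between $s^2$ and $16 s^2$ as $s \to 0^+$ because $\rho(s) \sim s^{-a}$ with $0 < a < 2$ traverses every class in $\R/\pi\Z$ infinitely often. On the other hand, the $C^1$-smoothness of $\psi$ at $0$ together with $x_1(\psi(0)) = y_1(\psi(0)) = 0$ forces the first-order Taylor expansion $x_1(\psi(y_s)) = as + o(s)$ and $y_1(\psi(y_s)) = bs + o(s)$ along the $x_1$-axis, so $G(\psi(y_s))/s^2 \to 16 a^2 + b^2$. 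Choosing subsequences $s_k, s_k' \to 0^+$ with $\cos^2\rho(s_k) = 0$ and $\cos^2\rho(s_k') = 1$, the $C^0$-estimate then forces $16 a^2 + b^2$ to equal both $1$ and $16$---a contradiction. The main technical hurdle is securing the $C^0$-estimate $G \circ \psi / G \circ \phi \to 1$; this is where the anisotropy of $f$ (the factor $1 + 15 \cos^2 \theta$) is crucial, since it is this anisotropy that makes $G \circ \phi$ oscillate at a definite amplitude under the winding of $\rho$, while $C^1$-smoothness of $\psi$ admits only a single quadratic leading term along the $x_1$-axis.
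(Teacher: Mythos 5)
Your argument follows the same overall strategy as the paper's: combine the conjugation identity $H = e^{-g}(F\circ\psi)$ with the defining relation $H = F\circ\phi$ to obtain $F\circ\psi = e^{g}(F\circ\phi)$, then exploit the quadratic anisotropy built into $f$ (equivalently $G$) together with the unbounded winding of $\rho$ near $r=0$ to contradict $C^1$-regularity of $\psi$. The execution is a genuine variant. You compare $G\circ\psi$ to $G\circ\phi$ along the $x_1$-axis, use the one-variable $C^1$ Taylor expansion of $\psi$ at $0$ to pin down the limit $G(\psi(y_s))/s^2 \to 16a^2 + b^2$, and contradict the oscillation of $G(\phi(y_s))/s^2$; the paper instead rewrites the relation as $f - f\circ(\psi\circ\phi^{-1}) = g\circ\phi^{-1}$, interprets boundedness of $g$ as bounding the level-set displacement of $\psi\circ\phi^{-1}$ (among concentric ellipses), and derives a divergent lower bound on the Lipschitz constant of $\psi$ from a pair of points that $\phi^{-1}$ brings close together while $\psi\circ\phi^{-1}$ holds apart. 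Your arithmetic contradiction (that $16a^2+b^2$ would have to equal both $1$ and $16$) is arguably crisper.

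There is, however, a gap. To take logarithms and write $g(y) = 4/G(\phi(y)) - 4/G(\psi(y))$ you need both $\phi(y)$ \emph{and} $\psi(y)$ to lie in the neighborhood of the origin on which $F = e^{-4/G}$; the former is automatic for $y$ near $0$ (after arranging $\int_{0^+}^1 s\rho(s)\,ds$ small), but the latter is exactly what your next sentence is trying to conclude. As written, the inference "$G(\psi(y)) \to 0$, hence $\psi(0) \in \{x_1 = y_1 = 0\}$" is circular. What does follow unconditionally from $F\circ\psi = e^g(F\circ\phi)$ is only that $F(\psi(0)) = 0$, which leaves open the possibility that $\psi(0)$ lands in a far-away part of the zero set of $F$ (near $\partial U$, say), where $F$ is not of the exponential form. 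You need a separate a priori localization of $\psi(0)$ to the component of $\{F=0\}$ through the origin -- this is what the paper's sentence "we may assume that the only isolated zero of the function $F$ \ldots is at the origin" is supplying, and it can also be extracted from the fact that $\psi$ must carry the unique compact connected component of the fixed-point set of $\Phi_H$ to that of $\Phi_F$. Once that localization is in place, the rest of your argument goes through.
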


\begin{proof}
Arguing by contradiction, suppose $\{ \phi_H^t \} = \{ \psi^{-1} \circ \phi_F^t \circ \psi \}$ for a contact $C^1$-diffeomorphism $\psi$.
Then by Lemma~\ref{lem:contact-ham}, $H = e^{- g} (F \circ \psi)$ for the continuous function $g$ on $\R^{2 n + 1}$ that is defined by $\psi^* \alpha = e^g \alpha$, and as a consequence,
\begin{align} \label{eqn:level-sets}
	F = e^{- g \circ \phi^{-1}} \left( F \circ \left( \psi \circ \phi^{-1} \right) \right)
\end{align}
on $\R^{2 n + 1}$.
We may assume that the only isolated zero of the function $F$ on $\R^{2 n + 1}$ is at the origin.
Then $\psi \circ \phi^{-1}$ must fix the origin, and in particular so does the map $\psi$.
Thus (\ref{eqn:level-sets}) implies that in a neighborhood of the origin
	\[ f - f \circ \left( \psi \circ \phi^{-1} \right) = g \circ \phi^{-1}. \]
The level sets $\{ f = c \}$ are of the form $E \times \R^{2 n - 1}$ for concentric ellipses $E$ centered at the origin of the plane that is parameterized by the polar coordinates $(r_1, \theta_1)$.
By continuity of $g$, the restriction of the function $g$ to $\phi^{-1} (U)$ is bounded.
Thus if $r_1 > 0$ is sufficiently small, then the point $\psi \circ \phi^{-1} (r_1, \frac{\pi}{2}, 0)$ lies on the same level set as the point $(\frac{2}{3} r_1, \frac{\pi}{2}, 0)$, or further away from the origin.
Here we write $0$ for the origin in the second factor of the above splitting $\R^{2 n + 1} = \R^2 \times \R^{2 n - 1}$.
Similarly, the point $\psi \circ \phi^{-1} (r_1', \pi, 0)$ lies on the same level set as the point $(\frac{4}{3} r_1', \pi, 0)$, or closer to the origin, provided $r_1' > 0$ is sufficiently small.
For $r_1, r_1' > 0$, the distance of the two concentric ellipses containing the two points $(r_1,\frac{\pi}{2})$ and $(r_1',\pi)$ is $| r_1 - \frac{1}{4} r_1' |$.
Thus if $r_1 > r_1' > 0$ are sufficiently small, then
	\[ \left| \psi \circ \phi^{-1} (r_1, \frac{\pi}{2}, 0) - \psi \circ \phi^{-1} (r_1', \pi, 0) \right| \ge \frac{2}{3} r_1 - \frac{1}{4} \cdot \frac{4}{3} r_1' \ge \frac{1}{3} r_1 > 0. \]

Choose two sequences $s_k > s_k' > 0$ converging to zero, such that $\rho (s_k) = \frac{\pi}{2}$ and $\rho (s_k') = \pi$ modulo $2 \pi$, and such that $s_k - s_k' < s_k^{1 + \delta}$ for a constant $0 < \delta < a$.
Denote by $L$ the Lipschitz constant of the map $\psi$.
Then
	\[ L \ge \frac{\left| \psi \left( \phi^{-1} (s_k, \frac{\pi}{2}, 0) \right) - \psi \left( \phi^{-1} (s_k', \pi, 0) \right) \right|}{\left| \phi^{-1} (s_k, \frac{\pi}{2}, 0) - \phi^{-1} (s_k', \pi, 0) \right|} \ge \frac{\frac{1}{3} s_k}{s_k - s_k'} > \frac{1}{3} \cdot \frac{1}{s_k^\delta} \longrightarrow + \infty \]
as $k \to \infty$.
This contradiction proves the $C^1$-diffeomorphism $\psi$ cannot exist.
\end{proof}

\bibliography{contact}
\bibliographystyle{amsalpha}
\end{document}